\documentclass[11pt,a4paper]{article}
\usepackage[margin=1.3in]{geometry}
\usepackage[T1]{fontenc}
\usepackage{tikz}
\usetikzlibrary{positioning}
\usepackage[utf8]{inputenc}
\usepackage{graphicx}
\usepackage{mathtools}
\usepackage{amssymb}
\usepackage{amsthm}
\usepackage{thmtools}
\usepackage{xcolor}
\usepackage{nameref}
\usepackage{hyperref}
\usepackage{amsmath}
\usepackage{amsfonts}
\usepackage{wrapfig}
\usepackage[autostyle]{csquotes}
\usepackage{layout}
\usepackage{tabularx}
\usepackage{authblk}
\usetikzlibrary{shapes.geometric, positioning}
\usepackage{caption}
\usepackage{mathrsfs}
\usepackage{subcaption}
\usetikzlibrary{positioning,chains,fit,shapes,calc}
\usepackage{tikz}
\theoremstyle{definition}
\newtheorem{defn}{Definition}[section]
\newtheorem{theorem}{Theorem}[section]
\newtheorem{corollary}{Corollary}[theorem]
\newtheorem{lemma}[theorem]{Lemma}

\theoremstyle{remark}
\newtheorem{remark}{Remark}[section]

\date{}
\begin{document}
		\title{\sc On Intersection Hypergraph of Finite Non-cyclic Abelian Groups}
	
	\author{{ \sc{ Ardra A N} ~  and ~  \sc Sachin Ballal}\footnote{Corresponding Author}}
	\affil{School of Mathematics and Statistics, University of Hyderabad,
		500046, India\\  23mmpp02@uohyd.ac.in, sachinballal@uohyd.ac.in}

	\date{}
	\maketitle
	
	\begin{abstract}
		In this paper, we study various structural properties like girth, chromatic number, etc. of  the intersection  hypergraph on the subgroups of finite non-cyclic abelian group and we  characterize for the star and hypertree structures. Furthermore, we  characterize  hypergraphs which are embeddable on plane, torus and projective plane.
	\end{abstract}
	\noindent \small \textbf{Keywords:} Hypergraphs, finite abelian, subgroups,  hypertree, planar, genus, etc.\\
	\noindent \small \textbf{AMS Subject Classification} 05C25, 05C65, etc.
	\section{Introduction}
Associating graphs and hypergraphs with various algebraic structures is an active and interesting area of research in mathematics \cite{sb1,sb2,sb3,cameron2023hypergraphs,devicomplement,gh}. Since the collection of maximal cliques of a graph naturally forms a hypergraph, called the clique hypergraph, this motivates us to study clique hypergraphs of graphs arising algebraic structures. The clique hypergraph of a graph $\mathcal{G}$ is the hypergraph with vertex set $V(\mathcal{G})$ whose
edge set is the family of vertex sets of maximal cliques in graph $\mathcal{G}$.  Our studies in \cite{sb1,sb2,sb3} show that structural properties such as girth, chromatic number, hypertree structure, and planarity can differ significantly between a graph and its clique hypergraph. Hence, such a study is relevant.  For more details on graphs and hypergraphs, one may refer \cite{bretto,voloshinhypergraphs,white1985graphs}, etc.

	\begin{defn}\cite{sb1}\label{intdefn}
		Let $G$ be a group and $S$ be the set of all non-trivial proper subgroups of $G$. \textit{The intersection hypergraph of $G$}, denoted by $\tilde{\Gamma}_\mathcal{H}(G)$, is a hypergraph whose vertex set,\\ $V =\{H \in S \,\, | \,\, H \cap K = \{e\} \,\, \text{for some $ K \in S $} \}$ and $E \subseteq V$ is a hyperedge if and only if \begin{enumerate}
			\item For distinct $H,K \, \in E$, $H \cap K = \{e\}.$
			\item There does not exist  $E' \supset E$ which  satisfies (1).
		\end{enumerate}
		
	\end{defn}
		
	\begin{defn}\cite{devicomplement}
		The complement of the intersection graph of subgroups of a group G, denoted by  $\mathscr{I}^c(G)$, is the graph whose vertex set is the set of all nontrivial proper subgroups of
		G and its distinct vertices H and K are adjacent if and only if $H \cap K =\{e\}$.
	\end{defn} 
	Let  $\mathscr{I}^c(G)^{\star}$ denote the graph obtained from  $\mathscr{I}^c(G)$ by removing all the isolated vertices. Hence,  $\tilde{\Gamma}_\mathcal{H}(G)$ is the clique hypergraph of $\mathscr{I}^c(G)^{\star}$.

	In this paper, our aim is to study the intersection hypergraph on the subgroups of a finite non-cyclic abelian group.  In Section 2, we study various structural properties of intersection hypergraph of a  finite non-cyclic abelian group. In Section 3,  we characterize those hypergraphs which are embeddable on plane, torus and projective plane.\\
	Henceforth, we assume that $G$ is a finite non-cyclic abelian group.
	\section{Structural Properties of $\tilde{\Gamma}_\mathcal{H}(G)$}
In this section, we study various structural properties of $\tilde{\Gamma}_\mathcal{H}(G)$. Note that $\tilde{\Gamma}_\mathcal{H}(G)$ is non-empty because all the subgroups of $G$ with prime order have pairwise trivial intersection and constitute a hyperedge. From the Cauchy's theorem for finite abelian group, it is immediate that a subgroup $H$ of $G$ is in the vertex set of $\tilde{\Gamma}_\mathcal{H}(G)$ if and only if the intersection of $H$ with atleast one subgroup of prime order is trivial.
\begin{remark}\label{cyc}
	Every cyclic subgroup of $G$ belongs to the vertex set of $\tilde{\Gamma}_\mathcal{H}(G)$.
\end{remark}
\noindent We now analyze the hyperedge set of $\tilde{\Gamma}_\mathcal{H}(G)$. The following results determine some positive integers $n$ for which there exists a group $G$ such that $\mid E(\tilde{\Gamma}_\mathcal{H}(G))\mid =n$.
\begin{theorem}\label{singlehyperedgeset}
	$\mid E(\tilde{\Gamma}_\mathcal{H}(G))\mid = 1$ if and only if $G \cong \mathbb{Z}_{p_1} \times \mathbb{Z}_{p_1}$ for some prime $p_1$.
\end{theorem}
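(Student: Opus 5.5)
We prove both directions directly from Definition~\ref{intdefn}, using Remark~\ref{cyc} and the observation that the subgroups of prime order form a hyperedge.

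\emph{Sufficiency.} Let $G\cong \mathbb{Z}_{p_1}\times\mathbb{Z}_{p_1}$. Every non-trivial proper subgroup has order $p_1$, so there are exactly $p_1+1$ of them and any two distinct ones intersect trivially. Each is a vertex, and the whole vertex set is pairwise trivially intersecting. Hence it is the unique maximal such set, and $\mid E(\tilde{\Gamma}_\mathcal{H}(G))\mid=1$.

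\emph{Necessity.} Assume $G$ is not of the form $\mathbb{Z}_{p_1}\times\mathbb{Z}_{p_1}$. It suffices to find two vertices $H\neq K$ with $H\cap K\neq\{e\}$. Then $\{H\}$ and $\{K\}$ extend to maximal pairwise-trivial sets $E_1\ni H$ and $E_2\ni K$. We have $E_1\neq E_2$, since no hyperedge can contain both $H$ and $K$. So there are at least two hyperedges.

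To find $H$ and $K$, write $|G|=n$. Since $G$ is non-cyclic abelian, some prime $p$ satisfies $G\supseteq \mathbb{Z}_p\times\mathbb{Z}_p$, and $|G|>p^2$. We split into two cases.

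\textbf{Case 1: $|G|$ has a prime divisor $q\neq p$.} Let $P$ be an order-$p$ subgroup and $Q$ an order-$q$ subgroup. Take $H=P$ and $K=PQ$, which is cyclic of order $pq$. Then $K$ is proper, because $G$ is non-cyclic, and $K$ is a vertex by Remark~\ref{cyc}. Also $H\cap K=P\neq\{e\}$.

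\textbf{Case 2: $G$ is a $p$-group of order at least $p^3$.} Then $G$ has a cyclic subgroup of order $p^2$, or else $G\cong\mathbb{Z}_p^k$ with $k\ge 3$.
\begin{itemize}
\item If $G$ has an element $g$ of order $p^2$, take $K=\langle g\rangle$, which is proper since $G$ is non-cyclic, and $H=\langle g^p\rangle$. Both are cyclic, hence vertices, and $H\subset K$.
\item If $G\cong\mathbb{Z}_p^k$ with $k\ge3$, take $H=\langle x\rangle$ and $K=\langle x,y\rangle\cong\mathbb{Z}_p^2$, which is proper. Here $K$ is non-cyclic, so Remark~\ref{cyc} does not apply, and we check directly that $K$ is a vertex: choose $z\notin K$; then $\langle z\rangle$ has prime order and $\langle z\rangle\cap K=\{e\}$, so $K\in V$ by the criterion stated before Remark~\ref{cyc}.
\end{itemize}

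The main care needed is this last case, verifying that a non-cyclic subgroup lies in $V$. The other step is the case split itself, which must cover every non-cyclic abelian $G$ other than $\mathbb{Z}_{p_1}^2$; it does, since such a $G$ either has a second prime divisor or is a $p$-group of order greater than $p^2$.
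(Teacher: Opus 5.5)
Your proof is correct and follows essentially the same route as the paper: the same three-way case split (order with a second prime divisor, non-elementary $p$-group, elementary abelian of rank at least $3$) with the same witnesses (a cyclic subgroup of order $pq$, a cyclic subgroup of order $p^2$, and a copy of $\mathbb{Z}_p\times\mathbb{Z}_p$). The only difference is framing. You exhibit two vertices with nontrivial intersection, whereas the paper exhibits a vertex of non-prime order that cannot lie in the hyperedge of all prime-order subgroups; these amount to the same argument.
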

\begin{proof}
	If $G \cong \mathbb{Z}_{p_1} \times \mathbb{Z}_{p_1}$, where $p_1$ is a prime, then there exist exactly $p_1+1$ non-trivial proper subgroups of order $p_1$ and constitute a single hyperedge with $p_1+1$ vertices, i.e., $\mid E(\tilde{\Gamma}_\mathcal{H}(G))\mid = 1$.
	
	\noindent Conversely, assume that  $G \ncong \mathbb{Z}_{p_1} \times \mathbb{Z}_{p_1}$. Clearly, there exists a hyperedge $e_1$  consisting of  all the $p_1$ order subgroups of $G$. Now, consider the following cases:\\
	\textbf{Case 1.} Let $G \cong \mathbb{Z}_{{p_1}^{\alpha_1}} \times \mathbb{Z}_{{p_1}^{\alpha_2}} \times \ldots \times \mathbb{Z}_{{p_1}^{\alpha_k}}$, where  $\alpha_i's$ are non-negative integers.\\
	\textbf{Subcase 1.1.} If $G$ is an elementary abelian group, then observe that $\mathbb{Z}_{p_1} \times \mathbb{Z}_{p_1} \times \{0\} \times \cdots \times \{0\} \bigcap  <(0,0,1,0,\cdots,0)>=\{e\}$ and $\mathbb{Z}_{p_1} \times \mathbb{Z}_{p_1} \times \{0\} \times \cdots \times \{0\} \bigcap <(1,0,0,0,\cdots,0)> \neq \{e\}$. Hence, there exists a hyperedge different from $e_1$ that contains $\mathbb{Z}_{p_1} \times \mathbb{Z}_{p_1} \times \{0\} \times \cdots \times \{0\}$ and $<(0,0,1,0,\cdots,0)>$ and, consequently, $\mid E(\tilde{\Gamma}_\mathcal{H}(G))\mid \neq 1$. \\
	\textbf{Subcase 1.2.} If $G$ is not an elementary abelian group, then, by Remark \ref{cyc}, there exists a hyperedge $e_2$ different from $e_1$ such that it contains a cyclic subgroup of $G$ with order $p_1^2$. Thus, $\mid E(\tilde{\Gamma}_\mathcal{H}(G))\mid \neq 1$.\\
	\textbf{Case 2.} If $G \cong \mathbb{Z}_{{p_1}^{\alpha_1}} \times  \mathbb{Z}_{{p_2}^{\alpha_2}} \times \ldots \times \mathbb{Z}_{{p_k}^{\alpha_k}}$, where  $\alpha_i's$ are non-negative integers, $p_i$'s are primes, not all distinct and not all equal, then there exists a hyperedge $e_2$ different from $e_1$ such that $e_2$ contains a cyclic subgroup of order $p_1p_2$, where $p_1$ and $p_2$ are two distinct prime divisors of $| G |$, and consequently, $\mid E(\tilde{\Gamma}_\mathcal{H}(G))\mid \neq 1$. 
\end{proof}
\begin{theorem}\label{edgecard}
	If $n$ is  a positive integer which is either of the form $p+1$ or $p+3$, where $p$ is  prime, then there exists a group $G$ with $\mid E(\tilde{\Gamma}_\mathcal{H}(G))\mid =n$.
\end{theorem}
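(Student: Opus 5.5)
To prove the statement we exhibit, for each prime $p$, one group realising each of the two values. The plan is to take $G_1=\mathbb{Z}_p\times\mathbb{Z}_{p^2}$ for $n=p+1$ and $G_2=\mathbb{Z}_p\times\mathbb{Z}_p\times\mathbb{Z}_q$, with $q\neq p$ any other prime, for $n=p+3$. In both cases the steps are the same:
\begin{enumerate}
\item list all non-trivial proper subgroups;
\item decide which of them are vertices, using the criterion stated before Remark \ref{cyc} (a subgroup is a vertex if and only if it meets some subgroup of prime order trivially);
\item enumerate the maximal families of pairwise trivially intersecting vertices.
\end{enumerate}

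\textbf{The group $G_1=\mathbb{Z}_p\times\mathbb{Z}_{p^2}$.} Its non-trivial proper subgroups are:
\begin{itemize}
\item the $p+1$ subgroups of order $p$, one of which is $N=\langle(0,p)\rangle$;
\item the socle $S=\mathbb{Z}_p\times p\mathbb{Z}_{p^2}$;
\item the $p$ cyclic subgroups $C_1,\dots,C_p$ of order $p^2$.
\end{itemize}
The socle $S$ contains every subgroup of order $p$, so it is not a vertex. Each $C_i$ is a vertex by Remark \ref{cyc}. Every $C_i$ contains $N$, since $N$ is the unique order-$p$ subgroup of a cyclic group of order $p^2$ lying in $G_1$. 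Hence any two of the $C_i$ intersect in $N$, and each $C_i$ meets every order-$p$ subgroup other than $N$ trivially.

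It follows that the hyperedges are:
\begin{itemize}
\item the set of all $p+1$ subgroups of order $p$, which is maximal because every $C_i$ meets $N$;
\item for each $i$, the set $\{C_i\}\cup\{\text{order-}p \text{ subgroups} \neq N\}$, which is maximal because a second $C_j$ or $N$ cannot be added.
\end{itemize}
Every clique contains at most one $C_i$, so every clique lies in one of these sets. This gives exactly $1+p=p+1$ hyperedges.

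\textbf{The group $G_2=\mathbb{Z}_p\times\mathbb{Z}_p\times\mathbb{Z}_q$.} Write $A_1,\dots,A_{p+1}$ for the order-$p$ subgroups, $P$ for the Sylow $p$-subgroup and $Q$ for the Sylow $q$-subgroup. The non-trivial proper subgroups are the $A_i$, $P$, $Q$ and the cyclic subgroups $A_iQ$ of order $pq$. All of them are vertices:
\begin{itemize}
\item $P$ meets $Q$ trivially;
\item each $A_iQ$ meets $A_j$ trivially for $j\neq i$;
\item the $A_i$ and $Q$ have prime order.
\end{itemize}
The intersection pattern is as follows: $P$ meets every $A_i$ and every $A_iQ$ nontrivially; any two $A_iQ$ meet in $Q$; and $A_iQ$ meets $Q$ and $A_i$ nontrivially.

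A clique containing $P$ can therefore only add $Q$, which gives the hyperedge $\{P,Q\}$. A clique containing some $A_iQ$ can contain no other $A_jQ$, no $Q$, no $A_i$ and no $P$. This gives the $p+1$ hyperedges $\{A_iQ\}\cup\{A_j: j\neq i\}$. The remaining cliques consist of the $A_i$ and $Q$, giving the hyperedge $\{A_1,\dots,A_{p+1},Q\}$. The total is $1+1+(p+1)=p+3$.

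The only real care needed is in the maximality and exhaustiveness checks, that is, showing that every clique lies in one of the listed sets. Since the intersection patterns above are explicit, this is a short case analysis organised by which non-prime-order vertex, if any, the clique contains.
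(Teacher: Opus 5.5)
Your proposal is correct and is essentially the paper's proof: you use the same groups $\mathbb{Z}_{p^2}\times\mathbb{Z}_p$ and $\mathbb{Z}_{pq}\times\mathbb{Z}_p\cong\mathbb{Z}_p\times\mathbb{Z}_p\times\mathbb{Z}_q$ with the same lists of $p+1$ and $p+3$ hyperedges, and you also check maximality and exhaustiveness, which the paper leaves implicit. Your stated reason that every $C_i$ contains $N$ is circular as written, but the fact itself is immediate from $pC_i\subseteq pG_1=\{0\}\times p\mathbb{Z}_{p^2}=N$.
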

\begin{proof}
	If $n$ is of the form $p+1$, then consider the group $G \cong \mathbb{Z}_{p^2} \times \mathbb{Z}_p$. The hyperedges of $\tilde{\Gamma}_\mathcal{H}(G)$ are: $e_1 =\{<(0,1)>,<(p,1)>,<(p,2)>, \cdots, <(p,p-1)>, <(p,0)>\}, e_2=\{<(0,1)>,<(p,1)>, <(p,2)>, \cdots, <(p,p-1)>, <(1,0)>\}, e_3=\{<(0,1)>,<(p,1)>,<(p,2)>, \cdots, <(p,p-1)>, <(1,1)>\}, \cdots, e_{p+1}=\{<(0,1)>,<(p,1)>,<(p,2)>, \cdots, <(p,p-1)>, <(1,p-1)>\}$ and consequently, $\mid E(\tilde{\Gamma}_\mathcal{H}(G))\mid =p+1$.\\
	 If $n$ is of the form $p+3$, then consider the group $G \cong \mathbb{Z}_{pq} \times \mathbb{Z}_p$, where $q$ is a prime distinct from $p$. Let $H_1, H_2, \cdots, H_{p+1}$ be $p+1$ subgroups of $G$ with order $p$, $H$ be the subgroup of $G$ with order $p^2$, $H^{'}$ be the subgroup of $G$ with order $q$, and $H_1H^{'}, H_2H^{'}, \cdots, H_{p+1}H^{'}$ be $p+1$ subgroups of $G$ with order $pq$. Then, the hyperedges of $\tilde{\Gamma}_\mathcal{H}(G)$ are: $e_1 =\{H_1, H_2, H_3, \cdots, H_{p+1}, H^{'} \}, e_2=\{H_2,H_3,H_4, \cdots, H_1H^{'}\}, e_3=\{H_1,H_3,H_4, \cdots, H_2H^{'}\}, \cdots, e_{p+2}=\{H_1,H_2,H_3, \cdots, H_{p+1}H^{'}\}$ and $e_{p+3}=\{H,H^{'}\}$ and consequently, $\mid E(\tilde{\Gamma}_\mathcal{H}(G))\mid =p+3$.
\end{proof}
	The following theorem characterizes the star structures in $\tilde{\Gamma}_\mathcal{H}(G)$.
	\begin{theorem}\label{starhyp}
			For a finite non-cyclic abelian group $G$, $\tilde{\Gamma}_\mathcal{H}(G)$ is a star hypergraph if and only if  $G \cong \mathbb{Z}_{p_1^\alpha} \times \mathbb{Z}_{p_1}$, where $p_1$ is a prime and $\alpha$ is a positive integer.
	\end{theorem}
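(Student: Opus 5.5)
Throughout, a star hypergraph means one in which all hyperedges share a common vertex. Both directions rest on one observation: if a vertex $H'$ satisfies $H'\neq H$ and $H\cap H'\neq\{e\}$, then any hyperedge containing $H'$ does not contain $H$.

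\textbf{Sufficiency.} Let $G\cong\mathbb{Z}_{p^\alpha}\times\mathbb{Z}_p$ with $p=p_1$, and put $K=\langle (p^{\alpha-1},0)\rangle$. I will first describe the vertex set.

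\begin{itemize}
\item The socle $\Omega=\{x: px=0\}\cong\mathbb{Z}_p\times\mathbb{Z}_p$ contains all $p+1$ subgroups of order $p$. Any subgroup containing $\Omega$ meets every order-$p$ subgroup nontrivially, so it is not a vertex.
\item Every subgroup not containing $\Omega$ has $p$-rank $1$, hence is cyclic, and by Remark~\ref{cyc} it is a vertex.
\item Let $\langle (a,b)\rangle$ have order $p^j$ with $j\ge 2$. Then $p^{j-1}(a,b)=(p^{j-1}a,0)\neq 0$, so this subgroup contains $K$.
\end{itemize}

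So the vertices are the $p+1$ subgroups of order $p$, together with cyclic subgroups of order at least $p^2$, all of which contain $K$.

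Two vertices intersect trivially exactly when they do not both contain $K$. Hence each of the $p$ order-$p$ subgroups $L\neq K$ (among them $\langle(0,1)\rangle$) meets every other vertex trivially. By maximality, every hyperedge contains all of them. Each hyperedge also contains exactly one vertex that contains $K$. In particular $\langle (0,1)\rangle$ lies in every hyperedge, so the hypergraph is a star. When $\alpha=1$ there is a single hyperedge, consistent with Theorem~\ref{singlehyperedgeset}.

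\textbf{Necessity.} Suppose $G$ is not of the stated form. I will show that no vertex lies in all hyperedges. By the observation, it suffices to show that every vertex $H$ meets nontrivially some other vertex $H'$: extend $\{H'\}$ to a maximal pairwise-trivially-intersecting family, and that hyperedge omits $H$.

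First suppose $H$ is not of prime order. Take $P\le H$ of prime order. Then $P\neq H$, and $P$ is a vertex by Remark~\ref{cyc}, so $H'=P$ works.

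Now suppose $H=P$ has prime order $p$. I split into cases.
\begin{enumerate}
\item If $|G|$ has a second prime divisor $q$, let $Q$ be a subgroup of order $q$. Then $PQ$ is cyclic of order $pq$. It is proper because $G$ is non-cyclic, and it is a vertex by Remark~\ref{cyc}.
\item If $G$ is a $p$-group of rank at least $3$, then $P$ together with another order-$p$ subgroup generates a copy of $\mathbb{Z}_p\times\mathbb{Z}_p$. This subgroup misses some third order-$p$ subgroup, so it is a vertex containing $P$.
\item If $G\cong\mathbb{Z}_{p^a}\times\mathbb{Z}_{p^b}$ with $a\ge b$, then the excluded form forces $b\ge 2$. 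Every socle element has the form $(p^{a-1}s,p^{b-1}t)=p\cdot(p^{a-2}s,p^{b-2}t)$. So $P$ lies in a cyclic subgroup of order $p^2$, which is proper and a vertex.
\end{enumerate}

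The main care point is this case analysis, in particular confirming that the chosen $H'$ is genuinely a vertex. Here Remark~\ref{cyc} handles cyclic choices, and the rank~$\ge3$ case is checked directly.
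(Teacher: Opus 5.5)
Your proof is correct and follows the paper's argument. For sufficiency you show $\langle(0,1)\rangle$ meets every other vertex trivially, so maximality puts it in every hyperedge; for necessity you use the same three cases for prime-order subgroups ($PQ$ when two primes divide $|G|$, a copy of $\mathbb{Z}_p\times\mathbb{Z}_p$ in rank $\ge 3$, a cyclic subgroup of order $p^2$ in rank $2$), while describing the vertex set explicitly and making precise, via $b\ge 2$, the paper's informal choice $\langle (g_1p_1^{-1}, g_2p_1^{-1})\rangle$.
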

	\begin{proof}
			Assume that  $G \cong \mathbb{Z}_{p_1^\alpha} \times \mathbb{Z}_{p_1}$, where $p_1$ is a prime. Consider the following cases:\\
		\textbf{Case 1.}  If $\alpha=1$, by Theorem \ref{singlehyperedgeset}, $\tilde{\Gamma}_\mathcal{H}(G)$ is a star hypergraph.\\
		\textbf{Case 2.} If  $\alpha \geq 2$, then the vertex $<(0,1)>$ of $\tilde{\Gamma}_\mathcal{H}(G)$ is adjacent to all the vertices in $\tilde{\Gamma}_\mathcal{H}(G)$. Hence, $\tilde{\Gamma}_\mathcal{H}(G)$ is a star hypergraph.\\
		Conversely, assume that $G \ncong  \mathbb{Z}_{p_1^\alpha} \times \mathbb{Z}_{p_1}$. Since there exists a hyperedge containing only the subgroups of $G$ of prime order, no subgroup of $G$ with  non-prime order belongs to all the hyperedges of  $\tilde{\Gamma}_\mathcal{H}(G)$. Now, to prove that no subgroup of prime order belongs to all the hyperedges, we will show that for each prime $p$ order subgroup $H_p$ there exists a vertex $K$ in $\tilde{\Gamma}_\mathcal{H}(G)$ such that $H_p \cap K \neq \{e\}$. To prove this claim, consider the following cases:\\
			\textbf{Case 1.}
		Let $G \cong \mathbb{Z}_{{p_1}^{\alpha_1}} \times  \mathbb{Z}_{{p_1}^{\alpha_2}} \times \ldots \times \mathbb{Z}_{{p_1}^{\alpha_k}}$, where $p_1$ is a prime and $\alpha_i's$ are non-negative integers.\\
		\textbf{Subcase 1.1.} If $k=2$, then for a prime order subgroup $H_p=<(g_1,g_2)>$, choose the vertex \\$K=<(g_1p_1^{-1},g_2p_1^{-1})>$, which is of order $p_1^2$ and so,  $H_p \cap K \neq \{e\}$.\\
		\textbf{Subcase 1.2.} If $k \geq 3$, then $G$ contains more than $p_1+1$ subgroups of order $p_1$. Observe that every non cyclic subgroup of $G$ of order $p_1^2$ has exactly $p_1+1$ subgroups of order $p_1$. Hence, for each subgroup $H_{p_1}$ of order $p_1$, there exists a subgroup $K \in V(\tilde{\Gamma}_\mathcal{H}(G))$ of order $p_1^2$ such that $H_{p_1} \cap K \neq \{e\}$.\\
		\textbf{Case 2.} Let $G \cong \mathbb{Z}_{{p_1}^{\alpha_1}} \times  \mathbb{Z}_{{p_2}^{\alpha_2}} \times \ldots \times \mathbb{Z}_{{p_k}^{\alpha_k}}$, where  $\alpha_i's$ are non-negative integers, $p_i$'s are primes, not all distinct and not all equal. For a prime $p_1$ order subgroup $H_{p_1}$, choose $K = H_{p_1}H_{p_2}$, where $p_2$ is a prime different from $p_1$.\\
		Hence, from the cases above, we can conclude that $\tilde{\Gamma}_\mathcal{H}(G)$ is not a star hypergraph.
	\end{proof}
	\begin{corollary}
		There does not exist a group $G$ with $\mid E(\tilde{\Gamma}_\mathcal{H}(G))\mid = 2$. 
	\end{corollary}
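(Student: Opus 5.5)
Suppose, for contradiction, that $\mid E(\tilde{\Gamma}_\mathcal{H}(G))\mid = 2$, with hyperedges $e_1,e_2$. The plan is to show that $\tilde{\Gamma}_\mathcal{H}(G)$ is then a star hypergraph, invoke Theorem \ref{starhyp} to get $G\cong \mathbb{Z}_{p_1^\alpha}\times\mathbb{Z}_{p_1}$, and then compute the hyperedge count of that group directly to reach a contradiction.

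\textbf{Step 1: two hyperedges intersect.} Let $e_1$ be the hyperedge of all prime-order subgroups (it exists, as noted at the start of this section). If $e_1\cap e_2=\emptyset$, pick $K\in e_2$. Since $K$ is non-trivial, it contains a subgroup of prime order, hence some $H_p\in e_1$ with $H_p\cap K\neq\{e\}$. Next, because $K$ is a vertex, $K$ meets some prime-order subgroup trivially, so $K$ is adjacent to a vertex of $e_1$. We need that any vertex lying in exactly one hyperedge has all its neighbours in that hyperedge. A pair $\{K,L\}$ with $K\cap L=\{e\}$ is a clique, and it extends to a maximal clique. If $K$ lies only in $e_2$, then every neighbour of $K$ lies in $e_2$. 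That neighbour of $K$ in $e_1$ then lies in $e_1\cap e_2$, which is non-empty.

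\textbf{Step 2: the star conclusion.} Having two hyperedges with non-empty intersection means some vertex lies in every hyperedge. That is exactly the star condition used in Theorem \ref{starhyp}. Hence $G\cong \mathbb{Z}_{p_1^\alpha}\times\mathbb{Z}_{p_1}$, and $\alpha\ge 2$ by Theorem \ref{singlehyperedgeset}.

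\textbf{Step 3: count the hyperedges of $\mathbb{Z}_{p^\alpha}\times\mathbb{Z}_p$, $\alpha\ge2$.} This step is the main obstacle. We show there are at least $p+1\ge 3$ hyperedges, generalizing the computation for $\alpha=2$ in Theorem \ref{edgecard}. The $p$ cyclic subgroups $\langle(1,i)\rangle$, $0\le i\le p-1$, have order $p^\alpha$, and each contains the subgroup $\langle(p^{\alpha-1},0)\rangle$. Two of them intersect in a subgroup of order at least $p^{\alpha-1}\ge p$, so each hyperedge contains at most one of them. By Remark \ref{cyc} each is a vertex, so each lies in some hyperedge. This already gives at least $p$ distinct hyperedges. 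The hyperedge $e_1$ of prime-order subgroups is one more, since $e_1$ contains $\langle(p^{\alpha-1},0)\rangle$, which meets each $\langle(1,i)\rangle$ non-trivially. So there are at least $p+1\geq 3$ hyperedges, which contradicts $\mid E\mid=2$.

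Step 3 alone, combined with a direct argument that excludes all other $G$, would also work. The route through Theorem \ref{starhyp} avoids a separate case analysis. The delicate point is Step 1: a vertex lying in only one maximal clique must have its whole neighbourhood inside that clique.
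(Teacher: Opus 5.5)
Your proposal is correct and follows the paper's route: reduce to a star hypergraph, apply Theorem \ref{starhyp} to get $G\cong\mathbb{Z}_{p_1^\alpha}\times\mathbb{Z}_{p_1}$, rule out $\alpha=1$ by Theorem \ref{singlehyperedgeset}, and then exhibit at least three hyperedges. Your Step 1 also proves the star reduction, which the paper only asserts. Your Step 3 handles all $\alpha\ge 2$ at once via the $p$ subgroups $\langle(1,i)\rangle$, whereas the paper splits into $\alpha=2$ (Theorem \ref{edgecard}) and $\alpha\ge 3$ (the vertices $\langle(p_1,0)\rangle$ and $\langle(1,0)\rangle$ together with the prime-order hyperedge).
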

	\begin{proof}
		Suppose that there exists a group $G$ with $\mid E(\tilde{\Gamma}_\mathcal{H}(G))\mid = 2$. Then, $\tilde{\Gamma}_\mathcal{H}(G)$ is a star. Hence, by Theorem \ref{starhyp}, $G \cong \mathbb{Z}_{p_1^\alpha} \times \mathbb{Z}_{p_1}$, where $p_1$ is a prime and $\alpha$ is a positive integer. From Theorem \ref{singlehyperedgeset} and by Theorem \ref{edgecard}, $\alpha=1$ and $\alpha=2$ cases do not arise.\\
		Now, if $\alpha \geq 3$, then there exist vertices  $<(p_1,0)>$ and $<(1,0)>$ of $\tilde{\Gamma}_\mathcal{H}(G)$ such that $<(p_1,0)> \cap <(1,0)> \neq \{e\}$. Hence, there exist hyperedges $e_1$ and $e_2$ containing $<(p_1,0)>$ and $<(1,0)>$ respectively and one hyperedge $e_3$ containing all the subgroups of order $p_1$. Consequently,  $\mid E(\tilde{\Gamma}_\mathcal{H}(G))\mid \geq 3$, which is a contradiction.	Therefore, there does not exist a group $G$ with $\mid E(\tilde{\Gamma}_\mathcal{H}(G))\mid = 2$. 
	\end{proof}
	To characterize $\tilde{\Gamma}_\mathcal{H}(G)$ is a hypertree, we essentially need the following.
	\begin{defn}\cite{voloshinhypergraphs}
		A \textit{host graph} $\mathcal{G}$ for a hypergraph is a connected graph on the same vertex set such that
		every hyperedge induces a connected subgraph of $\mathcal{G}$. A hypergraph $\mathcal{H} = (X,\mathcal{D})$ is called a \textit{hypertree} if there exists a host tree
		$T = (X,E)$ such that each edge $D \in \mathcal{D}$  induces a subtree in $T$. 
	\end{defn}
	\noindent Note that a simple graph $\mathcal{G}$ is a tree if and only if there is a unique path connecting any two vertices of $\mathcal{G}$.
	\begin{theorem}
	 $\tilde{\Gamma}_\mathcal{H}(G)$ is a hypertree if and only if $G \cong \mathbb{Z}_{p_1^\alpha} \times \mathbb{Z}_{p_1}$, where $p_1$ is a prime and $\alpha \geq 2$.
	\end{theorem}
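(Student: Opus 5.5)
We prove the two directions separately. For the forward direction we describe $\tilde{\Gamma}_\mathcal{H}(G)$ explicitly and exhibit a host tree. For the converse we use a necessary condition for hypertrees, the Helly property, which holds because subtrees of a tree are Helly.

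\emph{Sufficiency.} Let $G \cong \mathbb{Z}_{p_1^\alpha} \times \mathbb{Z}_{p_1}$ with $\alpha \geq 2$.
\begin{enumerate}
\item The socle $\Omega=\{x : p_1x=0\}\cong \mathbb{Z}_{p_1}\times\mathbb{Z}_{p_1}$ has exactly $p_1+1$ subgroups of order $p_1$. One of them is $L=\langle (p_1^{\alpha-1},0)\rangle$, and another is $\langle (0,1)\rangle$.
\item Every non-cyclic subgroup contains $\Omega$, so it meets every non-trivial subgroup non-trivially. Hence, with Remark~\ref{cyc}, the vertex set is exactly the set of non-trivial proper cyclic subgroups.
\item Let $K$ be a cyclic subgroup of order at least $p_1^2$. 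Its unique order-$p_1$ subgroup lies in $\Omega\cap p_1G=\langle (p_1^{\alpha-1},0)\rangle$, so $K\supseteq L$. Therefore $K$ meets $L$ and every other cyclic subgroup of order at least $p_1^2$ non-trivially, and meets the remaining $p_1$ subgroups of order $p_1$ trivially.
\item Consequently the hyperedges are the set $e_0$ of all order-$p_1$ subgroups, together with one hyperedge $e_K=\{K\}\cup(\{\text{order-}p_1\text{ subgroups}\}\setminus\{L\})$ for each such $K$.
\item Every hyperedge contains $\langle (0,1)\rangle$. Take as host tree $T$ the star centered at $\langle (0,1)\rangle$ with all other vertices as leaves. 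Any vertex subset containing the center induces a substar, so each hyperedge induces a subtree. This is the same observation as in Theorem~\ref{starhyp}, Case 2.
\end{enumerate}

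\emph{Necessity.} First consider $\alpha=1$. By Theorem~\ref{singlehyperedgeset} there is a single hyperedge containing all $p_1+1$ vertices. Under the definition above this is trivially a hypertree, since any spanning path of the vertices works. So we must either check whether the paper's convention excludes this degenerate case or treat it separately; we flag this as a point to reconcile with the statement. For every other $G$, Theorem~\ref{starhyp} shows that the intersection of all hyperedges is empty. Since a hypertree has the Helly property, it suffices to find a pairwise-intersecting subfamily of hyperedges with empty common intersection. Because $\tilde{\Gamma}_\mathcal{H}(G)$ is a clique hypergraph, prescribing any set of pairwise trivially-intersecting vertices and extending it to a maximal clique gives a hyperedge. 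We therefore pick three such cliques by hand, following the case split of Theorem~\ref{starhyp}.
\begin{itemize}
\item \emph{Several primes.} Use order-$p$ subgroups $P_1,P_2,P_3$, an order-$q$ subgroup $Q$, and the products $P_iQ$. For example, take $e_2\supseteq\{P_2,P_1Q\}$ and $e_3\supseteq\{P_1,P_2Q\}$, which meet at $P_3$ when the $p$-part is non-cyclic, together with a third clique chosen to avoid $P_3$. The exact choice still has to be fixed.
\item \emph{$p$-group of rank $2$.} Use cyclic subgroups of order $p_1^2$ lying over distinct order-$p_1$ subgroups. These exist precisely because both invariants are at least $p_1^2$, i.e.\ $G\ncong \mathbb{Z}_{p_1^\alpha}\times\mathbb{Z}_{p_1}$.
\item \emph{Rank at least $3$.} Use non-cyclic subgroups of order $p_1^2$ together with complementary cyclic subgroups, as in Subcase~1.1 of Theorem~\ref{singlehyperedgeset}.
\end{itemize}

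The main obstacle is the necessity step: choosing three hyperedges so that they are pairwise intersecting while no vertex lies in all three. Maximal extensions tend to add a common order-$p$ subgroup. If that happens, we will switch to showing directly that no host tree exists, by finding hyperedges $e,f$ with $|e\cap f|\geq 2$ arranged cyclically so that the unique tree paths between their shared vertices would be forced to form a cycle.
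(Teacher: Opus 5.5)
Your sufficiency argument is correct, and more explicit than the paper's appeal to Theorem~\ref{starhyp}. Your flag on $\alpha=1$ is also right, and you should state it as a conclusion rather than leave it open. For $G\cong\mathbb{Z}_{p_1}\times\mathbb{Z}_{p_1}$ the only hyperedge is the whole vertex set, which induces a subtree of any spanning tree. So the ``only if'' direction is false as stated. The paper's converse assumes $G\ncong\mathbb{Z}_{p_1^\alpha}\times\mathbb{Z}_{p_1}$ for every $\alpha$ and never addresses this case; its closing table even lists $\tilde{\Gamma}_\mathcal{H}(\mathbb{Z}_{p_1}\times\mathbb{Z}_{p_1})$ as a tree. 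The correct range is $\alpha\geq 1$.

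The genuine gap is necessity for all other $G$. Using Helly as the necessary condition is legitimate, but you never exhibit the pairwise intersecting subfamily, and your plan of using three hyperedges cannot work in general. In $\mathbb{Z}_p\times\mathbb{Z}_p\times\mathbb{Z}_q$ with $p\geq 3$ the hyperedges are exactly $\{P_1,\dots,P_{p+1},Q\}$, $\{P,Q\}$ and $\{P_iQ\}\cup\{P_j : j\neq i\}$. Every pairwise intersecting triple among them has a common $P_k$; your three share $P_4$. The same happens in $\mathbb{Z}_p^3$ for $p\geq 3$. So the subfamily must be larger, and you must know the full vertex sets of its members, not a few prescribed vertices followed by an uncontrolled maximal extension.

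The missing idea is the following. Let $\Pi$ be the set of prime-order subgroups. For every vertex $K\notin\Pi$, the set $f_K=\{K\}\cup\{x\in\Pi : x\not\le K\}$ is already a hyperedge. Indeed, a vertex outside $f_K$ adjacent to all of it would have all its prime-order subgroups inside $K$ while meeting $K$ trivially, which is impossible. For distinct such $K,K'$ you get $f_K\cap f_{K'}=\{x\in\Pi : x\not\le K,\ x\not\le K'\}$. For $|\mathcal{K}|\geq 2$, $\bigcap_{K\in\mathcal{K}}f_K$ is the set of $x\in\Pi$ contained in no member of $\mathcal{K}$. Hence Helly fails as soon as every $x\in\Pi$ lies in some $K\in\mathcal{K}$ but no two members of $\mathcal{K}$ together contain all of $\Pi$. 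Choose $\mathcal{K}$ by cases:
\begin{itemize}
\item If $|G|$ has two prime divisors: $\mathcal{K}=\{PR\}$, with $p$ a prime whose Sylow subgroup is non-cyclic, $P$ ranging over order-$p$ subgroups and $R$ over subgroups of prime order $\neq p$.
\item If $G\cong\mathbb{Z}_{p^a}\times\mathbb{Z}_{p^b}$ with $a,b\geq 2$: one cyclic subgroup of order $p^2$ above each order-$p$ subgroup.
\item If $G$ is a $p$-group of rank at least $3$: all elementary abelian subgroups of order $p^2$. Two of them contain at most $2p+2<p^2+p+1\leq|\Pi|$ subgroups of order $p$.
\end{itemize}

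With this, your Helly route is complete and genuinely different from the paper's, which argues inside an arbitrary host graph. The paper takes a path $H_1H_3H_2$ in the subtree induced by the prime-order subgroups and looks for a hyperedge containing $H_1,H_2$ but not $H_3$. That local argument must cope with whatever vertex the tree places in the middle. Its Case 2.2 choice of $K$ with $|K|=|G|/p_1^{\alpha}$ gives $K=Q=H_3$ for $\mathbb{Z}_p\times\mathbb{Z}_p\times\mathbb{Z}_q$ when the tree puts $Q$ there. The global Helly obstruction avoids this problem entirely.
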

	\begin{proof}
		If  $G \cong \mathbb{Z}_{p_1^\alpha} \times \mathbb{Z}_{p_1}$, then by Theorem \ref{starhyp}, $\tilde{\Gamma}_\mathcal{H}(G)$ is a star hypergraph and consequently,  $\tilde{\Gamma}_\mathcal{H}(G)$ is a hypertree.
		
		\noindent Conversely, assume that $G \ncong \mathbb{Z}_{p_1^\alpha} \times \mathbb{Z}_{p_1}$. Suppose that $G_\mathcal{H}$ is a host graph for $\tilde{\Gamma}_\mathcal{H}(G)$. We claim that for a pair of vertices of $G_\mathcal{H}$ there exist more than one path connecting them in $G_\mathcal{H}$. Consider the following cases:\\
		\textbf{Case 1.} Let $G \cong \mathbb{Z}_{{p_1}^{\alpha_1}} \times  \mathbb{Z}_{{p_1}^{\alpha_2}} \times \ldots \times \mathbb{Z}_{{p_1}^{\alpha_k}}$, where $p_1$ is a prime and $\alpha_i's$ are non-negative integers. Since all the subgroups of $G$ with order $p_1$ constitute a hyperedge,  all these subgroups must form a connected subgraph $T$ in  $G_\mathcal{H}$.  Let $H_1$ and $H_2$ be two arbitrary subgroups of $G$ with order $p_1$ such that $dist(H_1,H_2)=2$ in $T$.  Hence, there exists a path $P$ from $H_1$ to $H_2$ in $T$ of $G_{\mathcal{H}}$. Suppose $H_3$ is a vertex in $P$ between $H_1$ and $H_2$. Then, there exists a subgroup $K \supseteq H_3$ such that $K$ is adjacent to all subgroups of $G$ with order $p_1$, except $H_3$. Hence, there exists an another path from $H_1$ to $H_2$  containing $K$, but not containing $H_3$ in $G_{\mathcal{H}}$.\\ 
		\textbf{Case 2.} Let $G \cong \mathbb{Z}_{{p_1}^{\alpha_1}} \times  \mathbb{Z}_{{p_2}^{\alpha_2}} \times \ldots \times \mathbb{Z}_{{p_k}^{\alpha_k}}$, where  $\alpha_i's$ are non-negative integers and $p_i$'s are primes, not all distinct and not all equal. Without loss of generality, assume that $p_1$ is repeating atleast twice.  Since all the subgroups of $G$ with prime order constitute a hyperedge,  all these subgroups must form a connected subgraph $T$ in  $G_\mathcal{H}$.  Let $H_1$ and $H_2$ be two arbitrary subgroups of $G$ with order $p_1$ such that $dist(H_1,H_2)=2$ in $T$.  Suppose $H_3$ is a vertex in the path between $H_1$ and $H_2$.\\
		\textbf{Case 2.1.} If $H_3$ is a subgroup of order $p_1$, then there exists a vertex $H_3H_{p_2}$, where $p_2$ is a prime different from $p_1$, which is adjacent to $H_1$ and $H_2$, but not to $H_3$. Hence, there exists an another path from $H_1$ to $H_2$ containing $H_3H_{p_2}$, but not $H_3$ in $G_{\mathcal{H}}$. \\
		\textbf{Case 2.2.} If $H_3$ is not of order $p_1$, then there exists a vertex $K$ of order $\dfrac{\mid G \mid}{p_1^{\alpha}}$, where $\alpha$ is the highest power of $p_1$ such that $p_1^{\alpha}$ divides $\mid G \mid$  which is adjacent to $H_1$ and $H_2$, but not to $H_3$. Hence, there exists an another path from $H_1$ to $H_2$ containing $K$, but not containing $H_3$ in $G_{\mathcal{H}}$.\\
		Thus, $G_\mathcal{H}$ is not a tree. Since $G_\mathcal{H}$ is an arbitrary  host graph for $\tilde{\Gamma}_\mathcal{H}(G)$, we can conclude that $\tilde{\Gamma}_\mathcal{H}(G)$ is not a hypertree.
		\end{proof}
	 \noindent In the next theorem, we investigate the girth of $gr(\tilde{\Gamma}_\mathcal{H}(G))$.
\begin{theorem}
	The girth $gr(\tilde{\Gamma}_\mathcal{H}(G)) $ is either 2 or $\infty$.
\end{theorem}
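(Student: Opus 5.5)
We need to know the notion of girth for hypergraphs: a cycle in a hypergraph (Berge cycle) is a sequence $v_1,e_1,v_2,e_2,\ldots,v_k,e_k,v_1$ of distinct vertices and distinct hyperedges with $v_i,v_{i+1}\in e_i$; its length is $k\ge 2$. A Berge cycle of length $2$ is simply two distinct hyperedges sharing two distinct vertices. So the plan is to split according to the number of hyperedges: either there is only one hyperedge, in which case there is no cycle at all and the girth is $\infty$, or there are at least two hyperedges, in which case I will exhibit two hyperedges sharing at least two vertices, giving girth $2$.

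\textbf{Case $|E|=1$.} By Theorem \ref{singlehyperedgeset} this happens exactly when $G\cong\mathbb{Z}_{p_1}\times\mathbb{Z}_{p_1}$. A single hyperedge cannot support a Berge cycle, since a cycle needs at least two distinct hyperedges. Hence $gr(\tilde{\Gamma}_\mathcal{H}(G))=\infty$.

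\textbf{Case $|E|\ge 2$.} Let $e_1$ be the hyperedge consisting of all subgroups of prime order, as noted at the start of this section. I want a second hyperedge containing at least two prime-order subgroups. I will split along the same cases used in the proofs of Theorems \ref{singlehyperedgeset} and \ref{starhyp}.

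\emph{$G$ a $p_1$-group, not elementary abelian.} By Remark \ref{cyc}, a cyclic subgroup $C$ of order $p_1^2$ is a vertex. Its unique order-$p_1$ subgroup is $C_0$, and $C$ is disjoint from every other order-$p_1$ subgroup. The set of order-$p_1$ subgroups other than $C_0$, together with $C$, is pairwise trivially intersecting. Extend it to a hyperedge $e_2\neq e_1$. Since $G$ is non-cyclic there are at least $p_1+1\ge 3$ subgroups of order $p_1$, so $e_2$ contains at least two of them, and these lie in $e_1$ as well.

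\emph{$G$ elementary abelian of rank $\ge 3$.} Take $K=\mathbb{Z}_{p_1}\times\mathbb{Z}_{p_1}\times\{0\}\times\cdots$, which is a vertex. Every order-$p_1$ subgroup not contained in $K$ meets $K$ trivially, and there are at least two such subgroups (for instance, lines through $(0,0,1,\ldots)$ and $(1,0,1,\ldots)$). Adding them to $K$ and extending gives $e_2\neq e_1$ sharing two vertices with $e_1$.

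\emph{$|G|$ has at least two prime divisors.} Since $G$ is non-cyclic, some prime $p_1$ has at least $p_1+1$ subgroups of order $p_1$. Let $p_2\neq p_1$ be another prime divisor, and take $K=H_3H_{p_2}$ with $H_3$ of order $p_1$, as in Case 2.1 of the hypertree proof. Then $K$ meets every other order-$p_1$ subgroup trivially, and there are at least two of these. Extend to a hyperedge $e_2$; it is not $e_1$ because it contains $K$.

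In every subcase, two hyperedges share two distinct vertices $u,v$, and $u,e_1,v,e_2,u$ is a cycle of length $2$, so the girth is $2$. The main obstacle is bookkeeping the definition of girth (confirming that Berge cycles of length $2$ count) and ensuring each constructed set really extends to a hyperedge distinct from $e_1$. Distinctness holds because each constructed set contains a vertex of non-prime order.
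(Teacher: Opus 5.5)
Your proof is correct, and it reaches the conclusion by a more direct route than the paper.

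\textbf{The paper's route.} The paper checks explicit $2$-cycles only in three minimal groups: $\mathbb{Z}_{p_1^2}\times\mathbb{Z}_{p_1}$, $\mathbb{Z}_{p_1}^3$ and $\mathbb{Z}_{p_1}\times\mathbb{Z}_{p_1}\times\mathbb{Z}_{p_2}$. It then handles an arbitrary $G\not\cong\mathbb{Z}_{p_1}\times\mathbb{Z}_{p_1}$ by asserting an injective hypergraph homomorphism from $\tilde{\Gamma}_\mathcal{H}(G_1)$ into $\tilde{\Gamma}_\mathcal{H}(G)$ for a suitable subgroup $G_1\le G$.

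\textbf{Your route.} Your three subcases match these three base groups. Your second hyperedges are also of the same type as the paper's: a cyclic subgroup of order $p_1^2$ (like $\langle(1,0)\rangle$), and $H_3H_{p_2}$ (which is the paper's $\langle(0,0,1),(1,1,0)\rangle$). The difference is that you build both hyperedges directly inside $G$, so no transfer step is needed.

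\textbf{What this buys you.}
\begin{itemize}
\item The paper's transfer tacitly needs that distinct hyperedges of $\tilde{\Gamma}_\mathcal{H}(G_1)$ extend to distinct hyperedges of $\tilde{\Gamma}_\mathcal{H}(G)$. This is true by maximality within $G_1$, but it is not stated.
\item The inequality the transfer actually gives is $gr(\tilde{\Gamma}_\mathcal{H}(G))\le gr(\tilde{\Gamma}_\mathcal{H}(G_1))$, which is the reverse of what is printed.
\item Your requirement that $e_2$ contain a vertex of non-prime order is exactly what certifies $e_2\neq e_1$. In the paper's case $\mathbb{Z}_{p_1}^3$, both displayed vertex sets consist only of subgroups of order $p_1$. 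Both therefore sit inside the single hyperedge of all prime-order subgroups, so nothing in the stated containments forces the two hyperedges to differ. For $p_1=2$ the example also degenerates, since $\langle(1,0,2)\rangle=\langle(1,0,0)\rangle$. Your order-$p_1^2$ subgroup $K$ supplies the missing vertex and repairs this case.
\end{itemize}

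\textbf{What the paper's route buys.} Once the transfer is justified, the paper's approach reduces everything to three small explicit checks. It also reuses the same subgroup-monotonicity principle the paper later relies on for the genus results.
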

\begin{proof}
	Consider the following cases:\\
	\textbf{Case 1.} If $G \cong \mathbb{Z}_{p_1} \times \mathbb{Z}_{p_1}$, where $p_1$ is a prime, then $\tilde{\Gamma}_\mathcal{H}(G)$ is a hypergraph consisting of a single hyperedge with $p_1+1$ vertices. Hence, $gr(\tilde{\Gamma}_\mathcal{H}(G)) = \infty$.\\
	\textbf{Case 2.} If $G \cong \mathbb{Z}_{p_1^2} \times \mathbb{Z}_{p_1}$, where $p_1$ is a prime, then  $<(0,1)> e_1 <(p,1)> e_2 <(0,1)>$, where $e_1$ and $e_2$  are hyperdges of $\tilde{\Gamma}_\mathcal{H}(G)$ with $e_1 \supseteq \{<(0,1)>, <(p,1)>, <(p,0)>\}$ and $e_2 \supseteq \{<(0,1)>, <(p,1)>,<(1,0)>\}$, is a cycle of length 2 in $\tilde{\Gamma}_\mathcal{H}(G)$ and, consequently $gr(\tilde{\Gamma}_\mathcal{H}(G)) =2$.\\
	\textbf{Case 3.} If $G \cong \mathbb{Z}_{p_1} \times \mathbb{Z}_{p_1} \times \mathbb{Z}_{p_1}$, where $p_1$ is a prime, then  $<(1,0,0)>e_1<(1,1,0)>e_2<(1,0,0)>$, where $e_1$ and $e_2$ are hyperedges with  $e_1 \supseteq \{<(1,0,0)>,<(1,1,0)>, <(1,0,1)>\} $ and $e_2 \supseteq \{<(1,0,0)>,<(1,1,0)>, <(1,0,2)>\}$, is a cycle in $\tilde{\Gamma}_\mathcal{H}(G)$ of length 2 and, consequently, $gr(\tilde{\Gamma}_\mathcal{H}(G)) =2$.\\
	\textbf{Case 4.} If $G \cong \mathbb{Z}_{p_1} \times \mathbb{Z}_{p_1} \times \mathbb{Z}_{p_2}$, where $p_1$ and $p_2$ are distinct primes,  then $<(1,0,0)>e_1 <(0,1,0)> e_2 <(1,0,0)>$, where $e_1$ and $e_2$ are hyperedges with $e_1 \supseteq \{<(1,0,0)>,<(0,1,0)>, <(0,0,1)>\}$ and $e_2 \supseteq \{<(1,0,0)>,<(0,1,0)>, <(0,0,1),(1,1,0)>\}$, is a cycle in $\tilde{\Gamma}_\mathcal{H}(G)$ of length 2 and, consequently, $gr(\tilde{\Gamma}_\mathcal{H}(G)) =2$.\\
	\textbf{Case 5.} If $G \cong \mathbb{Z}_{{p_1}^{\alpha_1}} \times  \mathbb{Z}_{{p_2}^{\alpha_2}} \times \ldots \times \mathbb{Z}_{{p_k}^{\alpha_k}} \ncong \mathbb{Z}_{p_1} \times \mathbb{Z}_{p_1}$, where $p_i$'s are primes with atleast two $p_i's$ are equal and $\alpha_i's$ are non-negative integers, then there is an injective hypergraph homomorphism from  $\tilde{\Gamma}_\mathcal{H}(G_1)$ to $\tilde{\Gamma}_\mathcal{H}(G)$, where $G_1$ is one of the following group:  (i) $\mathbb{Z}_{p_1^2} \times \mathbb{Z}_{p_1}$, (ii) $\mathbb{Z}_{p_1} \times \mathbb{Z}_{p_1} \times \mathbb{Z}_{p_1}$ or (iii) $\mathbb{Z}_{p_1} \times \mathbb{Z}_{p_1} \times \mathbb{Z}_{p_2}$. Since there exists an injective hypergraph homomorphism from $\tilde{\Gamma}_\mathcal{H}(G_1)$ to $\tilde{\Gamma}_\mathcal{H}(G)$, $gr(\tilde{\Gamma}_\mathcal{H}(G_1)) \leq gr(\tilde{\Gamma}_\mathcal{H}(G))$ and hence, in this case,  $gr(\tilde{\Gamma}_\mathcal{H}(G))=2$.
\end{proof}
\noindent The chromatic number of $\mathscr{I}^c(G)^{\star}$ is greater than or equal to 3 but in case of its clique hypergraph, $\tilde{\Gamma}_\mathcal{H}(G)$, the chomatic number is 2 always.
\begin{theorem}
	The chromatic number, $\chi(\tilde{\Gamma}_\mathcal{H}(G))$,  is 2.
\end{theorem}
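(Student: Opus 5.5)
The plan is to show that $\chi(\tilde{\Gamma}_\mathcal{H}(G))\geq 2$ and then to exhibit an explicit proper $2$-colouring, where proper means that no hyperedge is monochromatic.

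\textbf{Lower bound.} It suffices to check that every hyperedge has at least two vertices. Let $E$ be a hyperedge and $H\in E$. Since $H$ is a vertex, there is some $K\in S$ with $H\cap K=\{e\}$, and $K$ is then also a vertex. If $E=\{H\}$, then $\{H,K\}\supset E$ would still satisfy condition (1) of Definition~\ref{intdefn}, contradicting the maximality of $E$. Hence $|E|\geq 2$, so a single colour never works and $\chi\geq 2$.

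\textbf{Upper bound.} Fix any subgroup $H_0$ of $G$ of prime order $p$. It is cyclic, so by Remark~\ref{cyc} it is a vertex. Colour a vertex \emph{red} if it contains $H_0$, and \emph{blue} otherwise. We must show that every hyperedge $E$ receives both colours.
\begin{enumerate}
\item \emph{At most one red vertex.} Two distinct red vertices both contain $H_0\neq\{e\}$, so they intersect nontrivially and cannot both lie in $E$. Since $|E|\geq 2$ by the lower-bound step, $E$ contains a blue vertex.
\item \emph{At least one red vertex.} Suppose no member of $E$ contains $H_0$. Because $|H_0|=p$ is prime, the only proper subgroup of $H_0$ is $\{e\}$. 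So for every $L\in E$, either $H_0\subseteq L$ or $L\cap H_0=\{e\}$, and by assumption the latter holds. In particular $H_0\notin E$, since $H_0$ contains itself. Thus $E\cup\{H_0\}$ strictly contains $E$ and still satisfies condition (1), contradicting maximality. Hence $E$ contains a red vertex.
\end{enumerate}

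The only delicate point is this maximality argument in the second step. It relies on $H_0$ having prime order, so that "not containing $H_0$" is equivalent to "meeting $H_0$ trivially". It also relies on $H_0$ actually being a vertex of the hypergraph, which Remark~\ref{cyc} provides. Everything else is immediate, and we conclude that $\chi(\tilde{\Gamma}_\mathcal{H}(G))=2$.
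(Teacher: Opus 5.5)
Your proof is correct, and it takes a different and sturdier route than the paper's.

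The paper splits into two cases.
\begin{itemize}
\item When $|G|$ has at least two prime divisors with $p_1$ repeated, it colours a vertex by whether its order is a power of $p_1$.
\item When $G$ is a $p_1$-group, it splits the subgroups of order $p_1$ into two nearly equal halves $A'$, $B'$, and colours a vertex by whether it contains a member of $B'$.
\end{itemize}
Your colouring is the second construction with the extreme choice $B'=\{H_0\}$, used uniformly for every $G$. It also makes explicit the fact the paper leaves behind ``similarly'': by maximality, each subgroup of prime order lies in exactly one member of every hyperedge. Hence there is exactly one red vertex per hyperedge, and $|E|\ge 2$ forces a blue one.

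The paper's colourings lack this control.
\begin{itemize}
\item In Case~1, take $G=\mathbb{Z}_2\times\mathbb{Z}_2\times\mathbb{Z}_3\times\mathbb{Z}_5$. The pair $\{\mathbb{Z}_2\times\mathbb{Z}_2\times\mathbb{Z}_3\times\{0\},\ \{0\}\times\{0\}\times\{0\}\times\mathbb{Z}_5\}$ is a hyperedge, since every subgroup of prime order lies in one of its two members. Neither member has $2$-power order, so this hyperedge is monochromatic.
\item In Case~2, take $\mathbb{Z}_2^4$ and five pairwise trivially intersecting subgroups of order $4$ that together contain all $15$ subgroups of order $2$; they form a hyperedge. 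An admissible balanced $B'$ of size $7$ can contain a subgroup of order $2$ from each of the five, which makes that hyperedge monochromatic.
\end{itemize}

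So your argument is not only shorter; it avoids a genuine gap in the paper's proof. It is also more general, since commutativity enters only through Remark~\ref{cyc}, to know that $H_0$ is a vertex. The one thing you leave implicit is that at least one hyperedge exists. This is immediate, because the set of all subgroups of prime order is one.
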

\begin{proof}
	Consider the following cases:\\
	\textbf{Case 1.} Let $G \cong \mathbb{Z}_{p_1^{\alpha_1}} \times \mathbb{Z}_{p_2^{\alpha_2}} \times \mathbb{Z}_{p_3^{\alpha_3}} \times \cdots \times  \mathbb{Z}_{p_k^{\alpha_{k}}}$, where $p_i$'s are primes, not all distinct and not all equal and $\alpha_i's$ are non-negative integers. Without loss of generality, assume that $p_1$ is repeating atleast twice. Divide the vertex set of $\tilde{\Gamma}_\mathcal{H}(G)$ into two sets $A$ and $B$ as follows:\\
	$$A =\{H \in V(\tilde{\Gamma}_\mathcal{H}(G)) \, \, : \,\,  \mid H \mid =p_1^\alpha \,\,\, \text{for some positive integer $\alpha$} \}$$
	and $$B =\{H \in V(\tilde{\Gamma}_\mathcal{H}(G)) \, \, : \,\,  \mid H \mid \neq p_1^\alpha \,\,\, \text{for some positive integer $\alpha$} \}.$$
	In order to show that $\chi(\tilde{\Gamma}_\mathcal{H}(G))=2$, we need to show that each hyperedge of $\tilde{\Gamma}_\mathcal{H}(G)$ contains atleast one vertex from the set $A$ and atleast one from the set $B$. If not, i.e., if $e_1$ contains the vertices from $A$ only, then we can find a subgroup $H_{p_2} \in B$ of $G$ with order $p_2$ with $H_{p_2} \cap H =\{e\}$ for all $H \in A$. Thus, $H_{p_2}$ must belong to $e_1$, otherwise contradicts the maximality condition of $e_1$. Similarly, $e_2$ cannot contain vertices from the set $B$ only. Hence, $\chi(\tilde{\Gamma}_\mathcal{H}(G))=2$.\\
	\textbf{Case 2.} Let $G \cong \mathbb{Z}_{p_1^{\alpha_1}} \times \mathbb{Z}_{p_1^{\alpha_2}} \times \mathbb{Z}_{p_1^{\alpha_3}} \times \cdots \times \mathbb{Z}_{p_1^{\alpha_k}}$, where $p_1$ is a prime, $\alpha_i$'s are non-negative integers and $k \geq 2$. Divide the subgroups of $G$ with order $p_1$ into two sets $A^{'}$ and $B^{'}$ such that either $\mid A^{'} \mid = \mid B^{'} \mid$ or  $\mid A' \mid = \mid B' \mid + \, \, 1 $. Divide $V(\tilde{\Gamma}_\mathcal{H}(G))$ into two sets $A$ and $B$ as follows:\\
	$$A=\{H \in V(\tilde{\Gamma}_\mathcal{H}(G)): \not\exists K \in B^{'} \, \text{such that} \, K \leq H\} \quad \quad \text{and} \quad \quad B =  V(\tilde{\Gamma}_\mathcal{H}(G)) - A$$ 
	Note that $A' \subseteq A$ and $B' \subseteq B$ and therefore $A$ and $B$ are non-empty. Then, on similar lines as in \textbf{Case 1}, we can conclude that $\chi(\tilde{\Gamma}_\mathcal{H}(G))=2$.
	\end{proof}
	\section{Embedding of $\tilde{\Gamma}_\mathcal{H}(G)$}
	An \textit{embedding of a graph} on a surface is a continuous one-to-one map from a topological representation of the graph into the surface, such that edges intersect only at their common endpoints.  We denote by $S_n$ the surface obtained from the sphere $S_0$  by adding $n$ handles. The number $n$ is called the \textit{genus of the surface} $S_n, n\geq 0$.  The \textit{orientable genus} of a graph $G$, denoted by $g(G)$, is the minimum genus of
	a surface in which $G$ can be embedded.  A \textit{cross-cap} is a topological object formed by identifying opposite points on the boundary of a circle (or a disk) and is equivalent to gluing a Möbius strip into a hole in a surface. A surface obtained by adding $k$ crosscaps to $S_0$ 
	is known as the \textit{non-orientable surface} and we denote it by $N_k$. The number $k$ is called the \textit{crosscap} of $N_k$. The \textit{non-orientable genus} of
	a graph $G$, denoted by $\tilde{g}(G)$, is the smallest integer $k$ such that $G$ can be embedded on $N_k$.  A graph is said to be \textit{planar} if it can be drawn on the plane in such a way that no edges intersect, except at a common end vertex.  A graph is said to be \textit{toroidal} if it  can be embedded on a torus and is called \textit{projective} if it can be embedded on a projective plane.   Further, note that if $H$ is a subgraph of a graph $G$, then $g(H) \leq g(G)$ and $\tilde{g}(H) \leq \tilde{g}(G)$. A hypergraph is \textit{toroidal} if its incidence graph is toroidal and is  \textit{projective} if its incidence graph is projective. 
	
	Embeddings of hypergraphs on surfaces reveal a relationship between combinatorics and topology. In this section, we characterize the hypergraphs that can be embedded in the plane, the torus, and the projective plane.
	
	\begin{lemma}\label{injgenus}
			If there exists an injective hypergraph homomorphism from $\tilde{\Gamma}_\mathcal{H}(G_1)$ to  $\tilde{\Gamma}_\mathcal{H}(G_2)$, then $g(\tilde{\Gamma}_\mathcal{H}(G_1)) \leq g(\tilde{\Gamma}_\mathcal{H}(G_2))$ and 	$\tilde{g}(\tilde{\Gamma}_\mathcal{H}(G_1)) \leq \tilde{g}(\tilde{\Gamma}_\mathcal{H}(G_2))$.
	\end{lemma}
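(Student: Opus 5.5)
The genus of a hypergraph is taken here to be the genus of its incidence (bipartite vertex--hyperedge) graph, as in the definitions of toroidal and projective hypergraphs above. So it suffices to show that an injective hypergraph homomorphism $\varphi:\tilde{\Gamma}_\mathcal{H}(G_1)\to\tilde{\Gamma}_\mathcal{H}(G_2)$ induces an embedding of the incidence graph $I_1$ of $\tilde{\Gamma}_\mathcal{H}(G_1)$ as a subgraph of the incidence graph $I_2$ of $\tilde{\Gamma}_\mathcal{H}(G_2)$. Once this is done, the remark already made in this section, that $g(H)\leq g(G)$ and $\tilde{g}(H)\leq \tilde{g}(G)$ for a subgraph $H$ of $G$, gives both inequalities at once.

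\textbf{Interpreting the homomorphism.} I read an injective hypergraph homomorphism as an injective map $\varphi:V(\tilde{\Gamma}_\mathcal{H}(G_1))\to V(\tilde{\Gamma}_\mathcal{H}(G_2))$ such that for every hyperedge $e$ of $\tilde{\Gamma}_\mathcal{H}(G_1)$ the image $\varphi(e)$ is a hyperedge of $\tilde{\Gamma}_\mathcal{H}(G_2)$. The induced map $e\mapsto\varphi(e)$ on hyperedges is then automatically injective: if $\varphi(e)=\varphi(e')$, injectivity of $\varphi$ on vertices gives $e=e'$.

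\textbf{Key steps.} First, define $\Phi:V(I_1)\to V(I_2)$ by sending a vertex $H$ to $\varphi(H)$ and a hyperedge-vertex $e$ to $\varphi(e)$. This map is injective by the remark above, and it preserves the bipartition. Second, check that incidences are preserved: if $H\in e$ then $\varphi(H)\in\varphi(e)$, so every edge $He$ of $I_1$ maps to the edge $\varphi(H)\varphi(e)$ of $I_2$. Hence $\Phi(I_1)$ is a subgraph of $I_2$ isomorphic to $I_1$.

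\textbf{Conclusion.} Restricting any embedding of $I_2$ on $S_n$ (respectively $N_k$) to this subgraph gives an embedding of $I_1$ on the same surface. Therefore $g(\tilde{\Gamma}_\mathcal{H}(G_1))\leq g(\tilde{\Gamma}_\mathcal{H}(G_2))$ and $\tilde{g}(\tilde{\Gamma}_\mathcal{H}(G_1))\leq \tilde{g}(\tilde{\Gamma}_\mathcal{H}(G_2))$.

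\textbf{Main obstacle.} The only delicate point is the definition of homomorphism. In the applications, such as Case 5 of the girth theorem, $\varphi(e)$ may only be contained in a hyperedge of $\tilde{\Gamma}_\mathcal{H}(G_2)$ rather than equal to one. In that case I would choose, for each $e$, a hyperedge $f(e)\supseteq\varphi(e)$ of $\tilde{\Gamma}_\mathcal{H}(G_2)$, and I would need $f$ to be injective.

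To get injectivity of $f$, I would use that distinct hyperedges of $\tilde{\Gamma}_\mathcal{H}(G_1)$ are distinct maximal cliques. So $e\neq e'$ contain non-adjacent vertices $H\in e\setminus e'$ and $H'\in e'\setminus e$. I would then require the homomorphism to preserve non-adjacency, which holds for the natural subgroup-inclusion maps used later, where $H\cap H'\neq\{e\}$ is preserved. Under that requirement, $\varphi(H)$ and $\varphi(H')$ cannot lie in a common hyperedge, so $f(e)\neq f(e')$. The incidence subgraph argument above then goes through unchanged.
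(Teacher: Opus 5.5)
Your proposal is correct and follows the same route as the paper: the injective homomorphism realizes $\mathcal{I}(\tilde{\Gamma}_\mathcal{H}(G_1))$ as a subgraph of $\mathcal{I}(\tilde{\Gamma}_\mathcal{H}(G_2))$, and both genera are monotone under taking subgraphs. Your treatment of the case where a hyperedge's image is only contained in a hyperedge improves on the paper's one-line proof, which assumes hyperedges map onto hyperedges; in the paper's own applications (subgroup inclusions) images are only contained in hyperedges, and your choice of $f(e)\supseteq\varphi(e)$, made injective by non-adjacency preservation, is exactly what those applications need.
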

	\begin{proof}
	An injective homomorphism from $\tilde{\Gamma}_\mathcal{H}(G_1)$ to  $\tilde{\Gamma}_\mathcal{H}(G_2)$ embeds $\tilde{\Gamma}_\mathcal{H}(G_1)$ as a subhypergraph of  $\tilde{\Gamma}_\mathcal{H}(G_2)$.  Thus, $\mathcal{I}(\tilde{\Gamma}_\mathcal{H}(G_1))$ is embedded as a subgraph of $\mathcal{I}(\tilde{\Gamma}_\mathcal{H}(G_2))$. Hence, $g(\tilde{\Gamma}_\mathcal{H}(G_1)) \leq g(\tilde{\Gamma}_\mathcal{H}(G_2))$ and 	$\tilde{g}(\tilde{\Gamma}_\mathcal{H}(G_1)) \leq \tilde{g}(\tilde{\Gamma}_\mathcal{H}(G_2))$.
	\end{proof}
		\noindent First, we analyze $\tilde{\Gamma}_\mathcal{H}(G)$ which are planar. To examine the planarity of $\tilde{\Gamma}_\mathcal{H}(G)$, we rely on the following results. 
	\begin{theorem}\label{kurplanar}\cite{walsh1975hypermaps}
		A graph  is planar if and only if it contains no subdivision of $K_5$ or $K_{3,3}$.
	\end{theorem}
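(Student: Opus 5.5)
Necessity and sufficiency need separate arguments; sufficiency is much harder. For \emph{necessity}, I would first note that planarity is preserved under taking subgraphs and under suppressing or inserting degree-two vertices. Hence it suffices to show that $K_5$ and $K_{3,3}$ themselves are non-planar. Both follow from Euler's formula $v-e+f=2$ for connected plane graphs.
\begin{itemize}
\item For $K_5$, every face has length at least $3$, so $2e \geq 3f$. This gives $e \leq 3v-6 = 9$, but $K_5$ has $10$ edges.
\item For $K_{3,3}$, the graph is bipartite, so faces have length at least $4$. This gives $e \leq 2v-4 = 8$, but $K_{3,3}$ has $9$ edges.
\end{itemize}
Consequently, any graph containing a subdivision of either one is non-planar.

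For \emph{sufficiency}, I would argue by minimal counterexample. Let $\mathcal{G}$ be a non-planar graph with no subdivision of $K_5$ or $K_{3,3}$, with $|V|+|E|$ minimal. The first step is to show that $\mathcal{G}$ is $3$-connected.
\begin{itemize}
\item If $\mathcal{G}$ is disconnected or has a cut vertex, then each block is planar by minimality. The block embeddings can be glued at the cut vertices, since every block can be redrawn with any chosen vertex on its outer face.
\item If $\mathcal{G}$ has a $2$-separation $\{x,y\}$, I would split $\mathcal{G}$ into the two sides, each augmented with the edge $xy$. Each piece is smaller and still has no Kuratowski subdivision, because the virtual edge $xy$ can be replaced by an $x$--$y$ path through the other side. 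Hence each piece is planar. Embedding both with $xy$ on the outer face and gluing along $xy$ then gives a planar embedding of $\mathcal{G}$.
\end{itemize}

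With $3$-connectivity in hand, I would follow Thomassen's route, which has three steps.
\begin{enumerate}
\item I would invoke the lemma that every $3$-connected graph on at least $5$ vertices has an edge $xy$ whose contraction $\mathcal{G}/xy$ is again $3$-connected.
\item I would check that $\mathcal{G}/xy$ has no $K_5$ or $K_{3,3}$ subdivision. A subdivision in $\mathcal{G}/xy$ lifts to $\mathcal{G}$ unless the contracted vertex is a branch vertex of degree $4$. In that case one obtains either a $K_5$ or a $K_{3,3}$ subdivision in $\mathcal{G}$ by splitting the vertex back into $x$ and $y$.
\item By minimality, $\mathcal{G}/xy$ has a plane embedding. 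Deleting the contracted vertex $z$ leaves a face whose boundary is a cycle $C$, using $3$-connectivity, and all neighbours of $x$ and $y$ lie on $C$. The neighbours of $x$ split $C$ into segments. If the neighbours of $y$ all lie in one segment, then $x$ and $y$ can both be placed in that face and the embedding extends to $\mathcal{G}$, a contradiction.
\end{enumerate}

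The main obstacle is the final case analysis in step 3. One must show that if no single segment contains all of $N(y)$, then either $x$ and $y$ have three common neighbours on $C$, giving a subdivided $K_5$, or they have interlacing neighbours, giving a subdivided $K_{3,3}$. I would handle this by taking two neighbours $u,v$ of $x$ that separate two neighbours of $y$ on $C$. From these I would exhibit the six branch vertices $\{x,y\}$ and the appropriate vertices on $C$ explicitly, and check that the arcs of $C$ give the required internally disjoint paths.
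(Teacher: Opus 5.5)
Your argument is correct. The necessity half is complete: Euler's formula applied to $K_5$ and $K_{3,3}$, together with the invariance of planarity under subgraphs and subdivisions. The sufficiency half is Thomassen's standard proof:
\begin{itemize}
\item reduce to the $3$-connected case;
\item take a contractible edge $xy$;
\item lift a Kuratowski subdivision from $\mathcal{G}/xy$ back to $\mathcal{G}$, where the $2$--$2$ split at a degree-$4$ branch vertex yields $K_{3,3}$;
\item re-insert $x$ and $y$ into the face left by the contracted vertex.
\end{itemize}
The case analysis you defer does close as you expect. If some neighbour of $y$ lies in the interior of a segment $C[x_i,x_{i+1}]$ and another lies outside it, take $u=x_i$, $v=x_{i+1}$. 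Otherwise every neighbour of $y$ on $C$ is a neighbour of $x$. Then either there are at least three of them, giving a subdivided $K_5$, or exactly two non-consecutive ones. In the latter case they are separated by neighbours of $x$ on both arcs of $C$, giving a subdivided $K_{3,3}$.

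The paper, by contrast, does not prove this statement at all: it is Kuratowski's theorem, quoted as a known result. The paper also only ever uses the easy direction, exhibiting a $K_{3,3}$ (or a subdivision of it) in an incidence graph to conclude non-planarity. Planarity in the positive cases of Theorem~\ref{planar} is shown by explicit drawings instead. So your Euler-formula argument alone already supplies everything the paper relies on. The Thomassen part adds rigor only for the direction the paper never invokes.
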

	\begin{theorem}\label{planarhyper}\cite{walsh1975hypermaps} 
		A hypergraph is planar if and only if its incidence graph is planar.
	\end{theorem}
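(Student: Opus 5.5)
The plan is to argue directly from the Zykov/Walsh notion of a planar hypergraph. In that notion, a hypergraph $\mathcal{H}=(X,\mathcal{D})$ is planar if the vertices can be placed as distinct points of the plane and each hyperedge $D\in\mathcal{D}$ represented by a closed region $R_D$ homeomorphic to a disk, such that: the vertices lying in $R_D$ are exactly the elements of $D$; every such vertex lies on the boundary $\partial R_D$; and distinct regions meet only in common vertex points. The incidence graph $\mathcal{I}(\mathcal{H})$ is the bipartite graph on $X\cup\mathcal{D}$ with $x\sim D$ if and only if $x\in D$. I will prove the two implications separately by explicit topological constructions.

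\emph{Planar hypergraph $\Rightarrow$ planar incidence graph.} Fix a planar representation as above. For each hyperedge $D$, choose a point $c_D$ in the interior of $R_D$. For each $x\in D$, join $c_D$ to $x$ by an arc running inside $R_D$; since $R_D$ is a closed disk with $x$ on its boundary, these arcs can be chosen pairwise disjoint except at $c_D$ (radial arcs). Arcs coming from different hyperedges lie in $R_D$ and $R_{D'}$ respectively. These regions meet only at common vertex points, which are endpoints of the arcs, so no two arcs cross. The resulting drawing is a plane embedding of $\mathcal{I}(\mathcal{H})$.

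\emph{Planar incidence graph $\Rightarrow$ planar hypergraph.} Fix a plane embedding of $\mathcal{I}(\mathcal{H})$. For each hyperedge-vertex $D$, consider the closed star formed by $D$ together with its incident edges. Replace it by a thin closed neighbourhood $R_D$ of that star, pinched so that $R_D$ touches each endpoint $x\in D$ only at the single point $x$. Concretely, take a small disk around $D$ and thin strips around each edge that taper to the point $x$. The result is homeomorphic to a disk, and the points $x\in D$ lie on its boundary. Since the embedding has disjoint stars away from the $X$-vertices, the neighbourhoods can be taken narrow enough that $R_D\cap R_{D'}$ consists only of vertex points shared by $D$ and $D'$. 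Moreover, $R_D$ contains no vertex outside $D$. Hence the regions $R_D$ give a planar representation of $\mathcal{H}$.

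The main obstacle is the second direction: making the thickening rigorous. One must check that the tapered neighbourhood is genuinely a disk, and that neighbourhoods of distinct stars touch only at shared vertices. I would handle this by first replacing the embedding with a piecewise-linear one, which is allowed by the standard fact that planar graphs have polygonal, indeed straight-line, embeddings. After that, choose $\varepsilon$ smaller than half the minimum distance between non-incident edges and vertices, and take $\varepsilon$-wedges, which makes all the separation claims elementary. Degenerate cases need a brief remark: a hyperedge with one vertex gives a region touching a single point, and isolated vertices are simply points.
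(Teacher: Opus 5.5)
Your proof is correct for the Zykov-style notion of planarity you fix. It necessarily differs from the paper, which gives no proof: the statement is quoted from Walsh, and the paper never says what a planar hypergraph is. For the torus and projective plane it simply \emph{defines} embeddability through the incidence graph, so within the paper the statement is close to definitional. Its real content is the equivalence with the geometric notion, which is exactly what you prove.

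Walsh's route is combinatorial, via the correspondence between hypermaps and bipartite maps (embeddings of $\mathcal{I}(\mathcal{H})$). That gives a correspondence between the embeddings themselves and yields the genus statement (Theorem~\ref{incidencegraph}) at the same time. Your route is elementary and self-contained: radial arcs from a centre $c_D$ inside each disk $R_D$ in one direction, and tapered thickenings of the stars of the hyperedge-vertices in the other. Both constructions are local, so they transfer to other surfaces once F\'ary's straight-line drawing is replaced by any piecewise-linear embedding.

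One point should be stated more carefully. A distance bound between non-incident edges and vertices does not separate the wedges around $Dx$ and $D'x$ for $x\in D\cap D'$, because these two edges are incident. You also need each wedge's opening angle at $x$ to be smaller than half the angle between the two segments there. You also need the wedges of a single star to be disjoint outside its central disk, so that $R_D$ is genuinely homeomorphic to a disk. In a straight-line drawing both conditions are trivially achievable, so this is a matter of making the choice of wedges explicit rather than a gap.
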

	\begin{theorem}\cite{sb3}\label{planarcyclic}
		Let $n=p_1^{\alpha_1}p_2^{\alpha_2} \ldots p_k^{\alpha_k}$, where $p_1, p_2, \ldots, p_k$ are distinct primes and $\alpha_1, \alpha_2, \cdots, \alpha_k$ are non-negative integers. Then,  $\tilde{\Gamma}_\mathcal{H}(\mathbb{Z}_n)$ is planar if and only if $n$ is equal to  $p_1^{\alpha_1}p_2$ or $p_1^{\alpha_1}p_2^2$ or  $p_1p_2p_3$ or $p_1^2p_2p_3$.
	\end{theorem}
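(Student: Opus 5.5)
We work directly with the divisor description of $\mathbb{Z}_n$ and use Theorem \ref{planarhyper} together with Theorem \ref{kurplanar}. The subgroups of $\mathbb{Z}_n$ correspond to the divisors $d$ of $n$, and $H_d \cap H_{d'}=\{e\}$ exactly when $\gcd(d,d')=1$. So a divisor $1<d<n$ is a vertex if and only if $d$ misses at least one prime $p_i$ with $\alpha_i\geq 1$. The hyperedges are then the maximal families of pairwise coprime such divisors. Equivalently, a hyperedge is obtained by partitioning a set of primes into blocks, choosing one vertex supported on each block, and taking the family maximal. The proof then splits according to the number $k$ of primes with positive exponent.

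For $k=1$ there are no vertices, so the statement degenerates and is covered by the case $\alpha_2=0$. For $k=2$, write $n=p^aq^b$. The vertices are $p^i$ ($1\le i\le a$) and $q^j$ ($1\le j\le b$), and the hyperedges are exactly the pairs $\{p^i,q^j\}$. Hence the incidence graph is a subdivision of $K_{a,b}$, which is planar if and only if $\min(a,b)\le 2$. This gives exactly the forms $p_1^{\alpha_1}p_2$ and $p_1^{\alpha_1}p_2^2$.

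For $k=3$, write $n=p^aq^br^c$. The hyperedges are of two types: triples $\{p^i,q^j,r^l\}$, and pairs $\{p^iq^j,r^l\}$ (together with their permutations). For $n=pqr$ and $n=p^2qr$ I would list all vertices and hyperedges explicitly and draw a plane embedding of the incidence graph. The hyperedges $\{p^i,q,r\}$ hang off the two vertices $q,r$, and the pair-hyperedges attach to a single prime-power vertex each, so a drawing should be routine. For non-planarity, it suffices by Lemma \ref{injgenus} (via subhypergraph inclusion, as in the girth theorem) to treat the minimal bad exponents $p^3qr$ and $p^2q^2r$. In each I would exhibit a $K_{3,3}$ subdivision inside the incidence graph. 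For $p^3qr$, take one side to be $\{q,r,qr\}$-related vertices and the other side to be $p,p^2,p^3$, connected through the hyperedges $\{p^i,q,r\}$ and $\{p^i,qr\}$. Some care is needed here, since $q$ and $r$ lie in a common hyperedge with each $p^i$, so the paths must be routed through distinct hyperedge-vertices. For $p^2q^2r$, use $p,p^2$ and $q,q^2$ with $r$ and $pr,qr$ to build the branch vertices.

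For $k\ge 4$, I would show that $\tilde{\Gamma}_\mathcal{H}(\mathbb{Z}_{pqrs})$ already contains a subdivision of $K_{3,3}$, for instance with sides $\{p,q,pq\}$ and $\{r,s,rs\}$ joined via the hyperedges $\{p,q,r,s\}$, $\{pq,r,s\}$, $\{p,q,rs\}$, $\{pq,rs\}$ and suitable others. Larger $n$ then follows by restriction. The main obstacle is the $k=3$ analysis: the two non-planarity certificates must use internally disjoint paths through distinct hyperedge-vertices, and the planar drawing for $p^2qr$ must be verified carefully. I expect the $K_{3,3}$ construction for $p^2q^2r$ to need the most trial and error.
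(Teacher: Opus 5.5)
The paper only quotes this result from \cite{sb3}, so there is no in-paper proof to compare with. Judged on its own, your framework is right: the divisor description, hyperedges as partitions of the prime support into at least two blocks with one divisor of exactly that support per block, the subdivided $K_{a,b}$ for $k=2$, and the reduction to the minimal cases $p^3qr$, $p^2q^2r$, $pqrs$. The gap is in the non-planarity certificates, which carry the whole ``only if'' direction: as specified, they cannot work. A branch vertex of a $K_{3,3}$ subdivision needs degree at least $3$. A vertex of support $S$ lies in one hyperedge for each way of partitioning the remaining primes and choosing divisors on the blocks. For $n=p^3qr$, $p^i$ lies only in $\{p^i,q,r\}$ and $\{p^i,qr\}$. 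For $n=pqrs$, $pq$ lies only in $\{pq,r,s\}$ and $\{pq,rs\}$, and $rs$ only in $\{p,q,rs\}$ and $\{pq,rs\}$. All of these have degree $2$ in $\mathcal{I}$, so your sides $\{p,p^2,p^3\}$, and $\{p,q,pq\}$ versus $\{r,s,rs\}$, fail under any routing. For $p^2q^2r$ you give no certificate at all.

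The repair is to use hyperedge-vertices as branch vertices; below, paths are written as vertex sequences.
\begin{itemize}
\item For $p^3qr$: put $T_i=\{p^i,q,r\}$, $i=1,2,3$, against $\{q,r,qr\}$. Each $T_i$ is adjacent to $q$ and $r$, and reaches $qr$ by $(T_i,p^i,\{p^i,qr\},qr)$.
\item For $pqrs$: put $\{p,q,X\}$ against $\{r,s,Y\}$ with $X=\{pq,r,s\}$, $Y=\{p,q,rs\}$. Join $a\in\{p,q\}$ to $b\in\{r,s\}$ through $\{a,b,cd\}$, where $\{c,d\}$ is the complementary pair, and join $X$ to $Y$ by $(X,pq,\{pq,rs\},rs,Y)$. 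Alternatively, $p,q,r,s,\{p,q,r,s\}$ span a $K_5$ subdivision.
\item For $p^2q^2r$: with $T_{ij}=\{p^i,q^j,r\}$, put $\{T_{11},T_{12},p^2\}$ against $\{p,q,r\}$. Use the edges from $T_{11}$ to $p,q,r$ and from $T_{12}$ to $p,r$, together with $(T_{12},q^2,\{pr,q^2\},pr,\{pr,q\},q)$, $(p^2,\{p^2,qr\},qr,\{p,qr\},p)$, $(p^2,T_{22},r)$ and $(p^2,T_{21},q)$.
\end{itemize}

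Three smaller points. First, justify the reduction directly rather than by appeal to the girth theorem. For $m\mid n$ with the same prime support, hyperedges of $\tilde{\Gamma}_\mathcal{H}(\mathbb{Z}_m)$ remain hyperedges of $\tilde{\Gamma}_\mathcal{H}(\mathbb{Z}_n)$. Otherwise, $E\mapsto E\cup\{\text{new primes}\}$ is an injective extension. Second, $p^a$ with $a\geq 3$ is not of any listed form, so $k=1$ is an excluded degenerate case rather than ``$\alpha_2=0$''. Third, in $p^2qr$ the hyperedges $\{p,qr\}$ and $\{p^2,qr\}$ share $qr$. So after deleting pendant paths, $\mathcal{I}$ is a theta graph between $\{p,q,r\}$ and $\{p^2,q,r\}$: still planar, but not the picture you describe.
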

	\begin{theorem}\label{planar}
		$\tilde{\Gamma}_\mathcal{H}(G)$ is planar if and only if $G$ is one of the following:
		\begin{enumerate}
		\item $\mathbb{Z}_{p_1} \times \mathbb{Z}_{p_1}$, where $p_1$ is any prime, 
		\item $ \mathbb{Z}_{2^\alpha} \times \mathbb{Z}_2$, where $\alpha \geq 2$ is a positive integer,
		\item $\mathbb{Z}_2 \times \mathbb{Z}_2 \times \mathbb{Z}_{p_2^\beta}$, where $\beta =1,2$ and $p_2$ is an odd prime.
		\end{enumerate}
	\end{theorem}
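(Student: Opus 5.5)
Planarity is decided through Theorem \ref{planarhyper}: we work with the incidence graph $\mathcal{I}(\tilde{\Gamma}_\mathcal{H}(G))$ and apply Theorem \ref{kurplanar}. The plan has two halves. First, for each of the three listed families we write down all vertices and hyperedges and draw a planar incidence graph. Second, for every other non-cyclic abelian $G$ we find a small ``forbidden'' group $G_1$ and an injective hypergraph homomorphism $\tilde{\Gamma}_\mathcal{H}(G_1)\to\tilde{\Gamma}_\mathcal{H}(G)$. We then show that $\mathcal{I}(\tilde{\Gamma}_\mathcal{H}(G_1))$ contains a subdivision of $K_{3,3}$ or $K_5$, and conclude by Lemma \ref{injgenus}.

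\textbf{Sufficiency.} For $\mathbb{Z}_{p_1}\times\mathbb{Z}_{p_1}$ there is a single hyperedge (Theorem \ref{singlehyperedgeset}), so the incidence graph is a star and hence planar. For $\mathbb{Z}_{2^\alpha}\times\mathbb{Z}_2$ the hypergraph is a star with centre $\langle(0,1)\rangle$ (Theorem \ref{starhyp}). The order-$2$ subgroups are $\langle(0,1)\rangle$, $\langle(2^{\alpha-1},0)\rangle$ and $\langle(2^{\alpha-1},1)\rangle$, and each hyperedge has the form $\{\langle(0,1)\rangle,\langle(2^{\alpha-1},1)\rangle,X\}$ or a similar small set. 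The incidence graph therefore consists of two or three high-degree vertices joined to many hyperedge-vertices of degree at most $3$. We will list the edges explicitly, in the manner of Theorem \ref{edgecard}, and draw them as nested fans, which is planar. For $\mathbb{Z}_2\times\mathbb{Z}_2\times\mathbb{Z}_{p_2^\beta}$ the relevant vertices are:
\begin{itemize}
\item the three subgroups $H_1,H_2,H_3$ of order $2$;
\item the subgroups $Q$ (and $Q^2$ when $\beta=2$) of $p_2$-power order;
\item the products $H_iQ$, and $H_iQ^2$ when $\beta=2$;
\item the Klein subgroup $K$, and its products with $Q$.
\end{itemize}
We compute the hyperedges. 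They include $\{H_1,H_2,H_3,Q\}$, $\{H_j,H_k,H_iQ\}$, $\{H_j,H_k,H_iQ^2\}$, $\{K,Q\}$ and $\{K,Q^2\}$. We then exhibit a plane drawing of this finite incidence graph.

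\textbf{Necessity.} We split by the shape of $G$. If $G$ is a $p_1$-group of rank $\ge 3$, we map from $G_1=\mathbb{Z}_{p_1}^3$. If $G$ has rank $2$ with $p_1$ odd and $G\not\cong\mathbb{Z}_{p_1}^2$, we map from $\mathbb{Z}_{p_1^2}\times\mathbb{Z}_{p_1}$; there, for $p_1\ge 3$, the $p_1+1$ hyperedges all share the $p_1\ge 3$ order-$p_1$ vertices $\langle(0,1)\rangle,\langle(p,1)\rangle,\dots$, which immediately gives a $K_{3,3}$ (see Theorem \ref{edgecard}). If $G$ is not a $p$-group, let $p_1$ be a prime with repeated factors. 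We use $\mathbb{Z}_{p_1}\times\mathbb{Z}_{p_1}\times\mathbb{Z}_{p_2}$ with $p_1$ odd, or $\mathbb{Z}_2\times\mathbb{Z}_2\times\mathbb{Z}_{p_2^3}$, or $\mathbb{Z}_2\times\mathbb{Z}_2\times\mathbb{Z}_{p_2}\times\mathbb{Z}_{p_3}$, or $\mathbb{Z}_4\times\mathbb{Z}_2\times\mathbb{Z}_{p_2}$, or $\mathbb{Z}_2^3\times\mathbb{Z}_{p_2}$ (the last is already covered by $\mathbb{Z}_2^3$). In each of these we locate three vertices lying in three common hyperedges, which gives a $K_{3,3}$. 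For example, in $\mathbb{Z}_{p_1}^2\times\mathbb{Z}_{p_2}$ the order-$p_1$ subgroups $H_2,H_3,H_4$ lie in the hyperedges $\{H_j: j\ne1\}\cup\{H_1Q\}$, $\{H_j:j\ne 5\}\cup\{H_5Q\}$ and the all-prime hyperedge. With $\mathbb{Z}_{p_2}$ and $\mathbb{Z}_{p_3}$ both present, $\{H_1,H_2,H_3\}$ together with the several $Q$-, $R$- and $QR$-type completions yields enough hyperedges. The injective homomorphisms are obtained, as in the girth theorem, by embedding the subgroup lattice of $G_1$ into that of $G$ via a subgroup isomorphic to $G_1$, with a complementing adjustment so that trivial intersections and maximality are preserved.

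The main obstacle is this homomorphism step together with the borderline groups $\mathbb{Z}_2\times\mathbb{Z}_2\times\mathbb{Z}_{p_2^3}$ and $\mathbb{Z}_4\times\mathbb{Z}_2\times\mathbb{Z}_{p_2}$. Maximal cliques in $G_1$ need not stay maximal in $G$, so we will work directly with the incidence subgraph: we fix the relevant subgroups of $G$ and extend each chosen clique to some hyperedge of $G$, making sure the extensions are distinct. For the borderline groups we first try a $K_{3,3}$ built from $H_1,H_2,H_3$ and three distinct hyperedges, using the chain $Q\subset Q^2\subset Q^3$ or the extra order-$4$ cyclic subgroups to separate them.
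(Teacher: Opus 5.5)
\textbf{The sufficiency half cannot be completed for $\beta=2$, because item 3 of the statement is false there.} Work in $G=\mathbb{Z}_2\times\mathbb{Z}_2\times\mathbb{Z}_{p_2^2}$ with your notation: $H_1,H_2,H_3$ of order $2$, $K$ the Klein subgroup, and $Q$, $Q^2$ of orders $p_2$ and $p_2^2$. Its hyperedges include the following:
\begin{itemize}
\item $A=\{H_1,H_2,H_3,Q\}$ and $B=\{H_1,H_2,H_3,Q^2\}$;
\item $C_i=\{H_iQ,H_j,H_k\}$ for $\{i,j,k\}=\{1,2,3\}$;
\item $\{K,Q\}$ and $\{K,Q^2\}$.
\end{itemize}
The set $\{K,Q^2\}$ is maximal because a subgroup meeting $K$ and $Q^2$ trivially would have trivial $2$-part and trivial $p_2$-part.

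In $\mathcal{I}(\tilde{\Gamma}_\mathcal{H}(G))$ take branch vertices $H_1,H_2,H_3,A,B$. Each $H_i$ is adjacent to $A$ and to $B$. The vertices $H_j$ and $H_k$ are joined through $C_i$. Finally, $A$ and $B$ are joined by the path $A,Q,\{K,Q\},K,\{K,Q^2\},Q^2,B$. These ten paths are internally disjoint, so the incidence graph contains a subdivision of $K_5$. By Theorems \ref{kurplanar} and \ref{planarhyper}, the hypergraph is not planar, so the plane drawing you promise does not exist.

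The paper's Figure \ref{planar1}(b) reaches the opposite conclusion only because it omits $\{K,Q^2\}$ and the three hyperedges $\{H_iQ^2,H_j,H_k\}$. Note also that $KQ$ is not a vertex at all, since it meets every non-trivial subgroup non-trivially. For $\beta=1$ your plan does work: the incidence graph is a subdivided $K_4$ on $H_1,H_2,H_3,A$ (the $C_i$ subdivide the triangle) with pendant paths attached, hence planar. So the correct item 3 is $\mathbb{Z}_2\times\mathbb{Z}_2\times\mathbb{Z}_{p_2}$ alone. In the necessity half, $\mathbb{Z}_2\times\mathbb{Z}_2\times\mathbb{Z}_{p_2^2}$ should replace $\mathbb{Z}_2\times\mathbb{Z}_2\times\mathbb{Z}_{p_2^3}$ as the forbidden group.

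\textbf{Your necessity half has further holes, even for the corrected statement.}
\begin{itemize}
\item The $2$-groups $\mathbb{Z}_{2^a}\times\mathbb{Z}_{2^b}$ with $a,b\ge 2$ fall under none of your cases. They contain $\mathbb{Z}_4\times\mathbb{Z}_4$, whose nine vertices (three subgroups of order $2$ and six cyclic subgroups of order $4$) form $K_{3,3,3}$ under trivial intersection. This gives $27$ hyperedges of size $3$, and an incidence graph with $36$ vertices and $81>2\cdot 36-4$ edges, so it is not planar.
\item Your device of three vertices lying in three common hyperedges is not always available. In $\mathbb{Z}_3\times\mathbb{Z}_3\times\mathbb{Z}_{p_2}$ there is no $H_5$, and only two hyperedges contain $H_2,H_3,H_4$. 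In $\mathbb{Z}_2\times\mathbb{Z}_2\times\mathbb{Z}_{p_2}\times\mathbb{Z}_{p_3}$ only $\{H_1,H_2,H_3,Q,R\}$ and $\{H_1,H_2,H_3,QR\}$ contain all three $H_i$. In $\mathbb{Z}_2^3$ no triple works at all.
\end{itemize}
In those cases you need an honest subdivision or an edge count:
\begin{itemize}
\item For $\mathbb{Z}_3^2\times\mathbb{Z}_{p_2}$, let $E_0$ be the all-prime hyperedge and $E_i=\{H_iQ\}\cup\{H_j:j\ne i\}$. Join $\{H_1,H_2,H_3\}$ to $\{E_0,E_4,E_1\}$, routing $H_1$ to $E_1$ via $E_2,H_4$.
\item For $\mathbb{Z}_2^2\times\mathbb{Z}_{p_2}\times\mathbb{Z}_{p_3}$, let $X_0=\{H_1,H_2,H_3,Q,R\}$ and $X_i=\{Q,H_iR\}\cup\{H_j:j\ne i\}$. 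Join $\{Q,H_1,H_2\}$ to $\{X_0,X_3,X_2\}$, routing $H_2$ to $X_2$ via $X_1,H_3$.
\item For $\mathbb{Z}_2^3$, the incidence graph has $22$ vertices and $42>40$ edges.
\end{itemize}

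\textbf{The homomorphism step you flag as the main obstacle is easy.} If $L\le G$ and $E$ is a hyperedge of $\tilde{\Gamma}_\mathcal{H}(L)$, then any hyperedge $E^*\supseteq E$ of $\tilde{\Gamma}_\mathcal{H}(G)$ meets the set of subgroups of $L$ exactly in $E$, by maximality of $E$. Hence $E\mapsto E^*$ is injective, and $\mathcal{I}(\tilde{\Gamma}_\mathcal{H}(L))$ is a subgraph of $\mathcal{I}(\tilde{\Gamma}_\mathcal{H}(G))$.
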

\begin{proof}
	First, consider the following cases: \\
	\textbf{Case 1.} If $G \cong \mathbb{Z}_{p_1} \times \mathbb{Z}_{p_1}$, for any prime $p_1$, then from Theorem \ref{singlehyperedgeset},  $\tilde{\Gamma}_\mathcal{H}(G)$ is planar.\\
	\textbf{Case 2.} If $G \cong \mathbb{Z}_{2^\alpha} \times \mathbb{Z}_2$, where $\alpha \geq 2$ is a positive integer, then Figure \ref{planar1}(a) depicts an embedding of $\mathcal{I}(\tilde{\Gamma}_\mathcal{H}(G))$ on a plane and hence, $\tilde{\Gamma}_\mathcal{H}(G)$ is planar.\\
		\textbf{Case 3.} If $G \cong \mathbb{Z}_2 \times \mathbb{Z}_2 \times \mathbb{Z}_{p_2^\beta}$, where $p_2$ is an odd prime and $\beta =1,2$ ,  then $\tilde{\Gamma}_{\mathcal{H}}(G)$ is a subhypergraph of $\tilde{\Gamma}_\mathcal{H}(\mathbb{Z}_2 \times \mathbb{Z}_2 \times \mathbb{Z}_{p_2^2})$. Consider the following hyperedges of $\tilde{\Gamma}_\mathcal{H}(\mathbb{Z}_2 \times \mathbb{Z}_2 \times \mathbb{Z}_{p_2^2})$: $e_1=\{<(1,1,0)>,<(1,0,0)>,<(0,1,0)>,<(0,0,p_2)>\}, e_2=\{<(1,1,0)>,<(1,0,0)>,<(0,1,0)>,<(0,0,1)>\}, e_3=\{<(1,1,0)>,<(1,0,0)>,<(0,1,0),(0,0,p_2)>\}, e_4=\{<(1,1,0)>,<(0,1,0)>,<(1,0,0),(0,0,p_2)>\}, e_5=\{<(1,0,0)>,<(0,1,0)>,<(1,1,0),(0,0,p_2)>\}$ and $e_6 =\{\mathbb{Z}_2 \times \mathbb{Z}_2 \times \{0\}, <(0,0,p_2)>\}$. Figure \ref{planar1}(b) depicts an embedding of $\mathcal{I}(\tilde{\Gamma}_\mathcal{H}(\mathbb{Z}_2 \times \mathbb{Z}_2 \times \mathbb{Z}_{p_2^2}))$ on a plane and hence, $\tilde{\Gamma}_{\mathcal{H}}(G)$ is planar.
			\begin{figure}[htbp!]
			\centering	
			\subfloat[An embedding of $\mathcal{I}(\tilde{\Gamma}_\mathcal{H}(\mathbb{Z}_{2^\alpha} \times \mathbb{Z}_2))$ on a plane]{\begin{tikzpicture}[scale=.8, every node/.style={circle, fill=black, minimum size=4pt, inner sep=0pt}]
					
					
					\node [scale=.4,label=right:{}] (u4) at (0,2.5) {};
					\node [scale=.4,label=right:{}] (u5) at (0,3) {};
					\node [label=right:{\scriptsize{$<(0,1)>$}}] (a1) at (2,-1.5) {};
					\node [label=left:{\scriptsize{$<(2,1)>$}}] (a2) at (-2,-1.5) {};
					\node [label=below:{}] (v1) at (0,-.7) {};
					\node [label=below:{}] (u1) at (0,-.2) {};
					\node [label=below:{}] (v2) at (0,1.5) {};
					\node [label=below:{}] (u2) at (0,2) {};
					\node [label=below:{}] (v3) at (0,.4) {};
					\node [label=below:{}] (u3) at (0,.9) {};

					
					\draw (a1) -- (v1);
					\draw (a2) -- (v1);
					\draw (u1) -- (v1);
					\draw (a1) -- (v2);
					\draw (a2) -- (v2);
					\draw (u2) -- (v2);
					\draw (a1) -- (v3);
					\draw (a2) -- (v3);
					\draw (u3) -- (v3);

			\end{tikzpicture}}
			\hspace{3cm}
			\subfloat[$\mathcal{I}(\tilde{\Gamma}_\mathcal{H}(\mathbb{Z}_2 \times \mathbb{Z}_2 \times \mathbb{Z}_{p_2^2})$ on a plane]{\begin{tikzpicture}[scale=.6, every node/.style={circle, fill=black, minimum size=4pt, inner sep=0pt}]
					

					\node [label=left:{\tiny{<(1,0,0)>}}] (u1) at (-4,0) {};
					\node [label=left:{\tiny{<(1,1,0)>}}] (u2) at (0,0) {};
					\node [label=left:{\tiny{<(0,1,0)>}}] (u3) at (4,0) {};
					\node [label=right:{\tiny{<(0,0,q)>}}] (u4) at (4,2.5) {};
					\node [label=right:{\tiny{<(0,0,1)>}}] (u5) at (1,-2) {};
					\node [label=left:{\tiny{<(0,1,0),(0,0,q)>}}] (u6) at (-1.5,2) {};
					\node [label=right:{\tiny{<(1,0,0),(0,0,q)>}}] (u7) at (1,2) {};
					\node [label=right:{\tiny{<(1,0,0),(0,0,q)>}}] (u8) at (1,-4) {};
					\node [label=right:{$\mathbb{Z}_2 \times \mathbb{Z}_2 \times \{0\}$}] (u9) at (5,0) {};
					
					\node [label=above:{$e_1$}] (v1) at (0,4) {};
					\node [label=left:{$e_2$}] (v2) at (0,-2) {};
					\node [label=left:{$e_3$}] (v3) at (-2,1) {};
					\node [label=left:{$e_4$}] (v4) at (2,1) {};
					\node [label=left:{$e_5$}] (v5) at (0,-4) {};
					\node [label=right:{$e_6$}] (v6) at (5,1.5) {};


					\draw (u1) --(v1);
					\draw (u2) --(v1);
					\draw (u3) --(v1);
					\draw (u4) --(v1);
					\draw (u1) --(v2);
					\draw (u2) --(v2);
					\draw (u3) --(v2);
					\draw (u5) --(v2);
					\draw (u1) --(v3);
					\draw (u2) --(v3);
					\draw (u6) --(v3);
					\draw (u2) --(v4);
					\draw (u3) --(v4);
					\draw (u7) --(v4);
					\draw (u1) --(v5);
					\draw (u3) --(v5);
					\draw (u8) --(v5);
					\draw (u4) --(v6);
					\draw (u9) --(v6);
			\end{tikzpicture}}
		\hspace{3cm}
		\subfloat[A subgraph of $\mathcal{I}(\tilde{\Gamma}_{\mathcal{H}}(\mathbb{Z}_{2^2} \times \mathbb{Z}_{2^2}))$]{\begin{tikzpicture}[scale=.6, every node/.style={circle, fill=black, minimum size=4pt, inner sep=0pt}]
				

				\node [label=left:{<(0,2)>}] (u1) at (-1,2) {};
				\node [label=left:{<(2,0)>}] (u2) at (-1,0) {};
				\node [label=left:{<(2,2)>}] (u3) at (-1,-2) {};
				\node [label=right:{<(1,-1)>}] (u4) at (4,2) {};
				\node [label=right:{<(3,-1)>}] (u5) at (3,-2) {};
				\node [label=below:{<(1,0)>}] (u6) at (4,-4) {};
				\node [label=above:{\tiny{<(1,2)>}}] (u7) at (0.8,-3) {};

				\node [label=above:{$e_1$}] (v1) at (2,2) {};
				\node [label=right:{$e_2$}] (v2) at (2,0) {};
				\node [label=left:{$e_3$}] (v3) at (2,-2) {};
				\node [label=left:{$e_4$}] (v4) at (6,-1) {};
				\node [label=left:{$e_5$}] (v5) at (-.5,-4) {};
				\node [label=left:{$e_6$}] (v6) at (0,-3) {};
				\node [label=right:{$e_7$}] (v7) at (2,-3) {};


				\draw (u1) --(v1);
				\draw (u2) --(v1);
				\draw (u3) --(v1);
				\draw (u1) --(v2);
				\draw (u2) --(v2);
				\draw (u4) --(v2);
				\draw (u1) --(v3);
				\draw (u2) --(v3);
				\draw (u5) --(v3);
				\draw (u4) --(v4);
				\draw (u6) --(v4);
				\draw (u3) --(v5);
				\draw (u6) --(v5);
				\draw (u3) --(v6);
				\draw (u7) --(v6);
				\draw (u5) --(v7);
				\draw (u7) --(v7);
				
		\end{tikzpicture}}
			\caption{}
			\label{planar1}
		\end{figure}\\
		Conversely, assume that $\tilde{\Gamma}_{\mathcal{H}}(G)$ is planar. Observe that  $\tilde{\Gamma}_{\mathcal{H}}(G)$ is planar if and only if  $\tilde{\Gamma}_{\mathcal{H}}(K)$ is  planar for all $K \leq G$. Let $K \cong \mathbb{Z}_{p_1^{\alpha_1}} \times \mathbb{Z}_{p_2^{\alpha_2}} \times \mathbb{Z}_{p_3^{\alpha_3}} \times \cdots \times \mathbb{Z}_{p_k^{\alpha_k}}\leq G$, where $p_i'$s are prime numbers and $\alpha_i'$s are non-negative integers. We consider possibilities of $K$ as follows:\\
		\textbf{Case 1.} All $p_i's$ are distinct \\
		\textbf{Case 1.1.} If $K \cong \mathbb{Z}_{p_1^{\alpha_1}} \times \mathbb{Z}_{p_2^{\alpha_2}}$, where $\alpha_1, \alpha_2 \geq 3$, then, by Theorem \ref{planarcyclic},  $K$ is non-planar.\\
		\textbf{Case 1.2.} If $K \cong \mathbb{Z}_{p_1^3} \times \mathbb{Z}_{p_2} \times \mathbb{Z}_{p_3}$ or $K \cong \mathbb{Z}_{p_1^2} \times \mathbb{Z}_{p_2^2} \times \mathbb{Z}_{p_3}$, then, by Theorem 4.4 in \cite{sb3},  $K$ is non-planar.\\
		\textbf{Case 1.3.} If $k \geq 4$, then, by Theorem \ref{planarcyclic},  $K$ is non-planar.\\
		\textbf{Case 2.} Two $p_i'$s are same\\
		\textbf{Case 2.1.} If $K \cong \mathbb{Z}_{p_1^{\alpha_1}} \times \mathbb{Z}_{p_1^{\alpha_2}}$, then consider the following cases:\\
		\textbf{Case 2.1.1.} If $p_1=2$ and $\alpha_1, \alpha_2 \geq 2$, then $ \mathbb{Z}_{2^2} \times \mathbb{Z}_{2^2}$ is a subgroup of $K$. Figure \ref{planar1}(c) depicts that $\mathcal{I}(\tilde{\Gamma}_\mathcal{H}(\mathbb{Z}_{2^2} \times \mathbb{Z}_{2^2}))$ contains a $K_{3,3}$ subdivision and hence, $\tilde{\Gamma}_\mathcal{H}(K)$ is not planar.\\
		\textbf{Case 2.1.2.} If  $p_1 \geq 3$ and $\alpha_1, \alpha_2$ are  positive integers with $\alpha_1 \geq 2$, then $\mathbb{Z}_{p_1^2} \times \mathbb{Z}_{p_1}$ is subgroup of $K$. Consider the hyperedges $e_1,e_2$ and $e_3$ of $\tilde{\Gamma}_\mathcal{H}(\mathbb{Z}_{p_1^2} \times \mathbb{Z}_{p_1})$ such that $e_1 \supseteq \{<(0,1)>,<(p_1,1)>, <(p_1,2)>, <(p_1,0)>\}$, $e_2 \supseteq \{<(0,1)>,<(p_1,1)>, <(p_1,2)>, <(1,0)>\}$ and $e_3 \supseteq \{<(0,1)>,<(p_1,1)>, <(p_1,2)>, <(1,1)>\}$.  Thus, $\mathcal{I}(\tilde{\Gamma}_\mathcal{H}(\mathbb{Z}_{p_1^2} \times \mathbb{Z}_{p_1}))$ contains a $K_{3,3}$ and hence, by Theorem \ref{kurplanar} and Theorem \ref{planarhyper}, $\tilde{\Gamma}_\mathcal{H}(K)$ is not planar.\\
		\textbf{Case 2.2.} If $K \cong \mathbb{Z}_{p_1^\alpha} \times \mathbb{Z}_{p_1} \times \mathbb{Z}_{p_2^\beta}$, then consider the following cases:\\
		\textbf{Case 2.2.1.} If $p_1=2, \alpha=1$ and $\beta \geq 3$, then $K$ contains a subgroup isomorphic to  $\mathbb{Z}_2 \times \mathbb{Z}_2 \times \mathbb{Z}_{p_2^3}$. Consider the hyperedges $e_1,e_2$ and $e_3$ of $\tilde{\Gamma}_\mathcal{H}(\mathbb{Z}_2 \times \mathbb{Z}_2 \times \mathbb{Z}_{p_2^3})$ such that $e_1 \supseteq \{<(1,1,0)>,<(1,0,0)>, <(0,1,0)>, <(0,0,1)>\}$, $e_2 \supseteq \{<(1,1,0)>,<(1,0,0)>, <(0,1,0)>, <(0,0,p_2)>\}$ and $e_3 \supseteq \{<(1,1,0)>,<(1,0,0)>, <(0,1,0)>, <(0,0,p_2^2)>\}$. Thus, $\mathcal{I}(\tilde{\Gamma}_\mathcal{H}(\mathbb{Z}_2 \times \mathbb{Z}_2 \times \mathbb{Z}_{p_2^3}))$ contains a $K_{3,3}$ and hence, by Theorem \ref{kurplanar}, Theorem \ref{planarhyper} and Lemma \ref{injgenus}, $\tilde{\Gamma}_\mathcal{H}(K)$ is not planar.\\
		\textbf{Case 2.2.2.} If $p_1=2, \alpha \geq 2$ and $\beta$ is any positive integer, then $K$ contains a subgroup isomorphic to $\mathbb{Z}_{2^2} \times \mathbb{Z}_2 \times \mathbb{Z}_{p_2}$. Consider the hyperedges $e_1, e_2$ and $e_3$, where $e_1 = \{<(0,1,0)>, <(2,1,0)>,<(0,0,1)>,<(1,0,0)>\}, e_2=\{<(0,1,0)>, <(2,1,0)>,<(0,0,1)>,<(1,1,0)>\}$ and $e_3=\{<(0,1,0)>, <(2,1,0)>,<(0,0,1)>,<(2,0,0)>\}$. Clearly, $\mathcal{I}(\tilde{\Gamma}_{\mathcal{H}}(\mathbb{Z}_{2^2} \times \mathbb{Z}_2 \times \mathbb{Z}_{p_2}))$ contains a $K_{3,3}$ structure and, consequently, by Theorem \ref{kurplanar}, Theorem \ref{planarhyper} and Lemma \ref{injgenus}, $\tilde{\Gamma}_{\mathcal{H}}(K)$ is not planar.\\
		\textbf{Case 2.2.3.} If $p_1 \geq 3$ and $\alpha$ is any positive integer, then $K$ contains a subgroup isomorphic to $\mathbb{Z}_{p_1} \times \mathbb{Z}_{p_1} \times \mathbb{Z}_{p_2}$. Let $H_1=<(0,1,0)>, H_2=<(1,0,0)>, H_3=<(1,1,0)>, H_4=<(2,1,0)>,$ $ H=<(1,0,0),(0,1,0)>$ and $H_5=<(0,0,1)>$ be subgroups of $\mathbb{Z}_{p_1} \times \mathbb{Z}_{p_1} \times \mathbb{Z}_{p_2}$. Thus, there are hyperedges $e_1 \supseteq \{H_1,H_2,H_3,H_5\}, e_2 \supseteq \{H_1,H_2,H_3,H_4H_5\}, e_3 \supseteq \{H_2,H_3,H_4,H_1H_5\}$ and $e_4 \supseteq \{H_1,H_3,H_4,H_2H_5\}$. Hence,  $I(\tilde{\Gamma}_\mathcal{H}(G))$ contains a $K_{3,3}$ subdivison and consequently, by Theorem \ref{kurplanar} and Theorem \ref{planarhyper}, $\tilde{\Gamma}_\mathcal{H}(\mathbb{Z}_{p_1} \times \mathbb{Z}_{p_1} \times \mathbb{Z}_{p_2})$ is not planar.\\
		\textbf{Case 2.3.} If $K \cong \mathbb{Z}_{p_1} \times \mathbb{Z}_{p_1} \times \mathbb{Z}_{p_2} \times \mathbb{Z}_{p_3}$, then consider the following cases:\\
		\textbf{Case 2.3.1.} If $p_1=2$, then $K \cong \mathbb{Z}_2 \times \mathbb{Z}_{2{p_2}{p_3}}$, then  consider the hyperedges $e_1,e_2,e_3$ and $e_4$ of $\tilde{\Gamma}_\mathcal{H}(G)$ such that $e_1=\{<(1,0)>, <(1,15)>, <(0,10)>, <(0,6)>\},$ $e_2 = \{<(1,0)>, <(1,15)>, <(0,10)>, <(0,3)>\}, e_3 = \{<(1,0)>, <(0,15)>, <(0,10)>, <(1,3)>\}$ and $e_4 = \{<(0,15)>,<(1,15)>,<(0,10)>, <(1,6)>\}$.  Thus, $\mathcal{I}(\tilde{\Gamma}_\mathcal{H}(\mathbb{Z}_2 \times \mathbb{Z}_{2{p_2}{p_3}}))$ contains a $K_{3,3}$ subdivision and hence, $\tilde{\Gamma}_\mathcal{H}(K)$ is not planar.\\
		\textbf{Case 2.3.2.} If $p_1 \geq 3$, then the proof follows from Case 2.2.2.\\
		\textbf{Case 3.}  Three $p_i'$s are same\\
		\textbf{Case 3.1.} If $K \cong \mathbb{Z}_{p_1} \times \mathbb{Z}_{p_1} \times \mathbb{Z}_{p_1}$, then $\tilde{\Gamma}_\mathcal{H}(K)$ contains a subhypergraph isomorphic to $\tilde{\Gamma}_\mathcal{H}(\mathbb{Z}_2 \times \mathbb{Z}_2 \times \mathbb{Z}_2)$. Consider the hyperedges $e_1,e_2,e_3$ and $e_4$ of $\tilde{\Gamma}_\mathcal{H}(\mathbb{Z}_2 \times \mathbb{Z}_2 \times \mathbb{Z}_2)$ such that $e_1 \supseteq \{<(1,1,1)>,<(0,1,1)>,<(0,0,1)>,<(1,0,0)>\}, e_2 \supseteq \{<(1,1,1)>,<(0,1,1)>,<(0,0,1)>,<(1,0,0),(1,1,0),(0,1,0)>\}, e_3 \supseteq \{<(1,1,1)>,<(0,1,1)>,<(0,1,0)>,<(1,1,0)>,<(1,0,0),(1,0,1),(0,0,1)>\}$ and $e_4 \supseteq \{<(1,0,0)>,<(1,1,1)>,<(0,1,0)>,<(0,0,1)>,<(1,1,0),(1,0,1)(0,1,1)>\}$. Thus, $\mathcal{I}(\tilde{\Gamma}_\mathcal{H}(K))$ contains a $K_{3,3}$ subdivision and hence, by Theorem \ref{kurplanar} and Theorem \ref{planarhyper}, $\tilde{\Gamma}_\mathcal{H}(K)$ is not planar.\\
	If $G$ is not isomorphic to $\mathbb{Z}_{p_1} \times \mathbb{Z}_{p_1}$, $\mathbb{Z}_{2^\alpha} \times \mathbb{Z}_2$ or $\mathbb{Z}_2 \times \mathbb{Z}_2 \times \mathbb{Z}_{p_1^\beta}$ with $\beta =1,2$, then $G$ contains a subgroup isomorphic to one of the above groups. Hence, if $\tilde{\Gamma}_\mathcal{H}(G)$ is planar, then $G$ is isomorphic to either $\mathbb{Z}_{p_1} \times \mathbb{Z}_{p_1}$, $\mathbb{Z}_{2^\alpha} \times \mathbb{Z}_2$ or $\mathbb{Z}_2 \times \mathbb{Z}_2 \times \mathbb{Z}_{p_1^\beta}$ with $\beta =1,2$.
\end{proof}
 \noindent We essentially need the following results on the orientable genus of a hypergraph in order to characterize  $\tilde{\Gamma}_\mathcal{H}(\mathbb{Z}_n)$  that are embeddable on the torus.
\begin{theorem}\label{incidencegraph}\cite{walsh1975hypermaps}
	For any hypergraph $\mathcal{H}$, $g(\mathcal{H})=g(\mathcal{I}(\mathcal{H}))$.
\end{theorem}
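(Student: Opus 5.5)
The plan is to prove the equality by two inequalities, one for each direction of translating embeddings, working directly from Walsh's notion of embedding a hypergraph $\mathcal{H}=(X,\mathcal{D})$ in a surface $S_n$. In that notion each vertex of $X$ is a distinct point of $S_n$, and each hyperedge $D\in\mathcal{D}$ is a closed disc $\Delta_D\subset S_n$. These discs are required to satisfy three conditions: the interiors of distinct discs are pairwise disjoint; the vertices lying on $\partial\Delta_D$ are exactly the elements of $D$; and no vertex lies in the interior of any disc. The surface class is fixed throughout, so the same argument also gives the non-orientable version.

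\textbf{Step 1: $g(\mathcal{H})\le g(\mathcal{I}(\mathcal{H}))$.} Fix a cellular-free embedding of $\mathcal{I}(\mathcal{H})$ in $S_n$, where $n=g(\mathcal{I}(\mathcal{H}))$. For each hyperedge-node $D$, consider its closed star, i.e. the point $D$ together with the arcs joining it to its incident vertex-nodes. Take a sufficiently thin regular neighbourhood of this star and then remove small open neighbourhoods of the vertex endpoints, keeping those vertices on the boundary. This gives a closed disc $\Delta_D$ whose boundary meets $X$ exactly in $D$.

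Distinct stars share only vertex-nodes, because the embedding has no crossings. Hence we can choose the thickenings small enough that their interiors are disjoint and avoid every vertex. This produces an embedding of $\mathcal{H}$ in $S_n$.

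\textbf{Step 2: $g(\mathcal{I}(\mathcal{H}))\le g(\mathcal{H})$.} Fix an embedding of $\mathcal{H}$ in $S_n$ with discs $\Delta_D$. For each $D$:
\begin{enumerate}
\item place a new point $c_D$ in the interior of $\Delta_D$;
\item join $c_D$ to each vertex $x\in D\subset\partial\Delta_D$ by a radial arc through the interior of $\Delta_D$. These arcs are pairwise disjoint apart from $c_D$, since a disc admits such a cone structure.
\end{enumerate}
Arcs belonging to different hyperedges lie in disjoint open discs. They can meet only at shared vertices on the boundaries, which are legitimate common endpoints. The points $c_D$ together with $X$ and these arcs therefore form an embedding of $\mathcal{I}(\mathcal{H})$ in $S_n$. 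Taking minima over $n$ in both steps yields $g(\mathcal{H})=g(\mathcal{I}(\mathcal{H}))$.

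\textbf{Main obstacle.} The delicate point is Step 1. We must make sure that the thickened stars can be chosen as genuine discs with disjoint interiors, even though several stars meet at a common vertex. I would handle this locally. Around each vertex $x$, choose a small disc neighbourhood $U_x$ in which the incident arcs appear as disjoint radii in a cyclic order. Inside $U_x$, thicken each radius into a thin wedge with apex at $x$. Since the radii are distinct and the cyclic order is strict, the wedges for different hyperedges meet only at $x$. Away from the vertices, the stars are disjoint compact sets, so they have disjoint neighbourhoods. Gluing the local wedges to these neighbourhoods gives the required discs.

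Degenerate cases are immediate. A hyperedge with a single vertex becomes a small disc touching that vertex. An isolated vertex is simply a point.
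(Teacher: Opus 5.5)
Your argument is correct, but it is not what the paper does. The paper gives no proof: it imports the statement from Walsh, where it follows from the correspondence between hypermaps and bipartite maps. In fact, Section~3 effectively takes the incidence graph as the definition (``a hypergraph is toroidal if its incidence graph is toroidal''), so inside the paper the theorem works as a black box, or even a tautology.

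You instead fix an explicit topological model: vertices as points, hyperedges as closed discs with pairwise disjoint interiors, no vertex inside a disc, and $\partial\Delta_D\cap X=D$. You then translate embeddings directly in both directions, by thickening the star of each hyperedge-node and by coning each disc from an interior point. This gives a short, self-contained proof with two advantages:
\begin{itemize}
\item It never uses orientability, so it also proves the non-orientable companion (Theorem~\ref{incidencegraph2}) at no extra cost.
\item It does not matter whether disc boundaries may meet away from vertices. Step~2 uses only disc interiors, and Step~1 produces discs that meet only at vertices.
\end{itemize}
Walsh's framework gives more: a bijection between the (2-cell) embeddings themselves, which your two inequalities do not record. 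On the other hand, your proof is only as meaningful as the definition of $g(\mathcal{H})$. Since the paper never fixes one, you should state your disc model explicitly as the definition you are using.

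Two small repairs. First, the initial description in Step~1 (``remove small open neighbourhoods of the vertex endpoints, keeping those vertices on the boundary'') would, as written, delete the vertices; replace it by the wedge construction from your last paragraph, which is the correct one. Second, the regular neighbourhoods and the ``radii in cyclic order'' picture at each vertex assume the embedding of $\mathcal{I}(\mathcal{H})$ is tame (piecewise linear). This is harmless, since every embedding of a finite graph in a surface is equivalent to such an embedding by the Schoenflies theorem, but it is worth saying.
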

\begin{lemma}\cite{bojan}\label{genus} The orientable genus of a complete bi-partite graph is given by:
	 $$g(K_{m,n})= \left\lceil \frac{(m-2)(n-2)}{4}\right\rceil, m,n \geq 2$$
		
\end{lemma}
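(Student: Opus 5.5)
The plan is to prove the two inequalities separately: a lower bound from Euler's formula, and a matching upper bound from an explicit embedding. The lower bound is routine; the upper bound is the real content of the formula.

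\textbf{Lower bound.} Take a cellular embedding of $K_{m,n}$ on $S_g$ with $g=g(K_{m,n})$; a minimum-genus embedding of a connected graph is always cellular. Then
\[ (m+n) - mn + F = 2-2g, \]
where $F$ is the number of faces. Since $K_{m,n}$ is bipartite, every face boundary has even length at least $4$ (the cases $m,n\ge 2$ exclude degenerate faces). Counting edge--face incidences gives $4F \le 2mn$, so $F\le mn/2$. Substituting,
\[ 2-2g \le m+n-\tfrac{mn}{2}, \]
that is, $g \ge \frac{mn-2m-2n+4}{4} = \frac{(m-2)(n-2)}{4}$. Since $g$ is an integer, $g \ge \left\lceil \frac{(m-2)(n-2)}{4}\right\rceil$.

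\textbf{Upper bound.} The task is to exhibit an embedding of $K_{m,n}$ on $S_{\lceil (m-2)(n-2)/4\rceil}$, and I would describe it by a rotation system, in the manner of Ringel's original argument. Write the parts as $\{a_1,\dots,a_m\}$ and $\{b_1,\dots,b_n\}$.

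\emph{Main case.} Suppose $m$ and $n$ are both even. For each $a_i$, give its neighbours the cyclic order $b_1,b_2,\dots,b_n$ when $i$ is odd and the reverse order when $i$ is even. Give each $b_j$ the analogous alternating order on the $a_i$. Then trace the faces via the usual face-tracing rule (follow edge $x\to y$, then leave $y$ along the successor of $x$ in the rotation at $y$). The goal is to check that every face is a quadrilateral $a_i b_j a_{i+1} b_{j+1}$, possibly after adjusting the scheme. If so, $F=mn/2$, and Euler's formula gives genus exactly $(m-2)(n-2)/4$.

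\emph{Remaining cases.} When one of $m,n$ is odd, I would delete a vertex of degree $m$ or $n$ from a quadrilateral embedding of a larger even case, or add one vertex to a smaller even case. Adding a vertex inside a union of faces, joined across suitably chosen faces, should cost at most the handles needed to match the ceiling. One then verifies arithmetically that the ceiling is attained.

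\textbf{Main obstacle.} I expect the hardest step to be the combinatorial verification that the rotation scheme really produces only quadrilateral faces, together with the parity bookkeeping for the odd cases, where a naive vertex addition may overshoot the ceiling by one handle. If the simple alternating scheme fails, I would fall back to Ringel's current-graph construction. Alternatively, since the paper only needs the formula for small $m,n$ (as used later for the torus characterisation), one can check those specific values directly. In the paper itself this is unnecessary, because the lemma is quoted from \cite{bojan}.
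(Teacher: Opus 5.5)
The paper gives no proof of this lemma; it is Ringel's theorem, quoted from \cite{bojan}, so your proposal has to stand on its own. Your lower bound is complete and correct. The even--even case is also fine, and no ``adjusting'' is needed. With your alternating rotations, applying the face-tracing rule four times to the dart $(a_i,b_j)$ returns to it, passing through $a_{i\pm1}$ and $b_{j\pm1}$ with signs fixed by the parities of $j$ and $i$. This is exactly where $m,n$ even is used: the parity of an index must be well defined modulo $m$ and $n$. Hence every face is a quadrilateral, $F=mn/2$, and the genus is $(m-2)(n-2)/4$. You should write out these four lines rather than state them as a goal.

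The genuine gap is the case where $m$ or $n$ is odd, which is where the content of the theorem lies. Neither of your devices reaches the ceiling, and the loss is not one handle but grows with the size of the other side. Say $m$ is even and $n$ odd.

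\begin{itemize}
\item Deleting a $b$-vertex from the quadrangular embedding of $K_{m,n+1}$ leaves genus $(m-2)(n-1)/4$. This exceeds $\lceil (m-2)(n-2)/4\rceil$ by $\lfloor (m-2)/4\rfloor$.
\item Adding a vertex to the quadrangular embedding of $K_{m,n-1}$ is no better. Each face contains only two of the $a_i$, so the new vertex must reach $m/2$ faces. That costs $m/2-1$ tubes and again gives $(m-2)(n-1)/4$.
\end{itemize}

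Concretely, take $K_{3,n}$ with $n$ even, so the odd side has size $3$. Both methods give genus $(n-2)/2$, while the true value is $\lceil (n-2)/4\rceil$. Already $K_{3,6}$ is toroidal, but you would only get genus $2$.

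The underlying reason is Euler's formula. An embedding of genus $g=\lceil (m-2)(n-2)/4\rceil$ must satisfy $\sum_f(|f|-4)=8g-2(m-2)(n-2)\in\{0,2,4,6\}$, where $|f|$ is the length of the face $f$. So it must be a quadrangulation up to a total excess of at most $6$, and local surgery on a quadrangulation of a neighbouring $K_{m,n'}$ cannot produce this.

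What is missing is a construction that builds such near-quadrangulations directly for every residue class of $m,n$ modulo $4$. One option is Ringel's original explicit constructions. Another is Bouchet's diamond sum, which glues embeddings of $K_{m,n_1}$ and $K_{m,n_2}$ along a vertex of degree $m$ into an embedding of $K_{m,n_1+n_2-2}$ of genus $g_1+g_2$; face excesses add as well. Summing with a quadrangulation of $K_{m,6}$ then takes $n$ to $n+4$ while staying at the ceiling, so everything reduces to finitely many small base embeddings.

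Naming ``Ringel's current-graph construction'' as a fallback does not supply this step. Checking only the small values used later in the paper would not prove the lemma as stated either.
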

\begin{theorem}\cite{sb3}\label{genuscyclic}
	The orientable genus, $g(\tilde{\Gamma}_\mathcal{H}(\mathbb{Z}_n))$, of $\tilde{\Gamma}_\mathcal{H}(\mathbb{Z}_n)$ is equal to 1  if and only if $n =p_1^{\alpha}p_2^3$ with $ 3 \leq \alpha \leq 6$ or   $n=p_1^4p_2^4$ or  $n=p_1^{\beta}p_2p_3$ with $3 \leq \beta \leq 5$, where $p_1,p_2$ and $p_3$ are distinct primes.
	
\end{theorem}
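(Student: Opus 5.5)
The plan is to reduce everything to the divisor lattice of $n$. In $\mathbb{Z}_n$ there is exactly one subgroup $H_d$ of order $d$ for each divisor $d$, and $H_d\cap H_e=\{e\}$ if and only if $\gcd(d,e)=1$. So $V(\tilde{\Gamma}_\mathcal{H}(\mathbb{Z}_n))$ consists of the divisors $1<d<n$ that miss at least one prime of $n$. The hyperedges are the maximal families of pairwise coprime such divisors. By Theorem \ref{incidencegraph}, the genus of the hypergraph equals the genus of its incidence graph $\mathcal{I}$, and $\mathcal{I}$ is bipartite. I will therefore bound $g(\mathcal{I})$ from below with the Euler bound for triangle-free graphs,
$$g(\mathcal{I})\ \ge\ \left\lceil \frac{|E(\mathcal{I})|-2|V(\mathcal{I})|+4}{4}\right\rceil,$$
together with forbidden $K_{m,n}$-subdivisions and Lemma \ref{genus}. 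Upper bounds come from explicit drawings on the torus.

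\textbf{Two primes, $n=p_1^{a}p_2^{b}$.} The vertices are $p_1^i$ ($1\le i\le a$) and $p_2^j$ ($1\le j\le b$), and the hyperedges are exactly the pairs $\{p_1^i,p_2^j\}$. Hence $\mathcal{I}$ is the subdivision of $K_{a,b}$, so $g=g(K_{a,b})=\lceil (a-2)(b-2)/4\rceil$ by Lemma \ref{genus}. This equals $1$ exactly when $a,b\ge 3$ and $1\le (a-2)(b-2)\le 4$. Up to symmetry these are $(a,b)=(3,b)$ with $3\le b\le 6$ and $(4,4)$, which gives $p_1^{\alpha}p_2^3$ with $3\le\alpha\le 6$ and $p_1^4p_2^4$.

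\textbf{Three primes.} Here the hyperedges are of the four types $\{p_1^i,p_2^j,p_3^k\}$, $\{p_1^ip_2^j,p_3^k\}$, $\{p_1^ip_3^k,p_2^j\}$ and $\{p_2^jp_3^k,p_1^i\}$. For $n=p_1^{\beta}p_2p_3$ this is a small, explicit family: for each $i$ there are $4$ hyperedges, together with the single vertices $p_2,p_3,p_2p_3$.
\begin{itemize}
\item For $\beta\le 2$ the graph is planar by Theorem \ref{planarcyclic}.
\item For $\beta=3$ planarity already fails by Theorem \ref{planarcyclic}, so $g\ge 1$.
\item For $3\le\beta\le5$ I would draw an explicit torus embedding for $\beta=5$ and delete vertices to get $\beta=3,4$. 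Deleting vertices is legitimate because the hypergraph for $p_1^{\beta-1}p_2p_3$ sits inside the one for $p_1^{\beta}p_2p_3$, in the sense of Lemma \ref{injgenus}.
\item For $\beta\ge 6$ I apply the Euler bound. The vertex and edge counts grow linearly in $\beta$ with edge slope larger than twice the vertex slope, so the bound reaches $2$. I must check that it does so at $\beta=6$; if not, I will exhibit a $K_{3,7}$- or $K_{4,5}$-subdivision built from $p_2,p_3,p_2p_3$ and the hyperedges through them.
\end{itemize}
All other three-prime exponent patterns, namely $p_1^{\alpha}p_2^2p_3$ with $\alpha\ge 3$, $p_1^2p_2^2p_3^{\gamma}$, and patterns with two cubes, I plan to exclude. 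Each contains either the two-prime obstruction $K_{a,b}$ of genus at least $2$, obtained by ignoring $p_3$, or a configuration whose Euler count already forces $g\ge 2$. In each case I check the smallest member and propagate upward by divisibility.

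\textbf{Four or more primes.} Already $p_1p_2p_3p_4$ is handled by counting. The singletons, pairs $p_ip_j$ and triples give many hyperedges, such as all perfect matchings of type $\{p_ip_j,p_kp_l\}$ and the type $\{p_i,p_j,p_kp_l\}$, and the Euler count should give $g\ge2$; monotonicity under $m\mid n$ then finishes.

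The main obstacle is twofold. First, the explicit toroidal drawings for $p_1^5p_2p_3$ and $p_1^6p_2^3$ need care; for $p_1^4p_2^4$ the standard torus embedding of $K_{4,4}$ suffices. Second, the sharp lower bound $g\ge2$ at $p_1^6p_2p_3$ may not follow from pure Euler counting. There I would look for two disjoint $K_{3,3}$-subdivisions, or use the additivity of genus over blocks if $\mathcal{I}$ splits at a cut vertex such as $p_2p_3$.
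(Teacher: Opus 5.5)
Your two-prime analysis is correct. The three-prime plan, however, breaks at $\beta=6$, and it cannot be repaired, because the statement is false there. (The paper gives no proof of this theorem; it is only cited.)

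The reduction you skip settles $n=p_1^\beta p_2p_3$ completely. Each $p_1^ip_2$ with $i\ge1$ lies only in $\{p_1^ip_2,p_3\}$, and each $p_1^ip_3$ lies only in $\{p_1^ip_3,p_2\}$, so these give pendant paths. Moreover, $p_1^i$ and $\{p_2p_3,p_1^i\}$ have degree $2$. Pruning the pendant paths and suppressing degree-$2$ vertices leaves a subdivision of $K_{3,\beta}$. Its parts are $\{p_2,p_3,p_2p_3\}$ and $\{A_1,\dots,A_\beta\}$ with $A_i=\{p_1^i,p_2,p_3\}$, where $A_i$ reaches $p_2p_3$ through $p_1^i$ and $\{p_2p_3,p_1^i\}$. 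By Lemma \ref{genus}, $g=\lceil(\beta-2)/4\rceil$, which equals $1$ for every $3\le\beta\le 6$.

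So at $\beta=6$ there is no $K_{3,7}$- or $K_{4,5}$-subdivision and no pair of disjoint $K_{3,3}$'s. There is also no useful block decomposition: up to pendant paths, this is the same $K_{3,6}$ you yourself accept as toroidal for $p_1^6p_2^3$.

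Your slope heuristic is also wrong. You have $|V(\mathcal{I})|=7\beta+3$ and $|E(\mathcal{I})|=9\beta$, so $|E|-2|V|+4=-5\beta-2<0$. The Euler bound is vacuous until you pass to the homeomorphic core, and there it is exactly Ringel's bound. The range in the statement should read $3\le\beta\le 6$, which is also what the paper itself uses in Case 1.2 of Theorem \ref{orgenus}, where $g>1$ is claimed only for $\alpha_1\ge 7$.

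Your other exclusion steps fail for the same reason.

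\textbf{The case $p_1^2p_2^2p_3$.} This is the smallest member of your family $p_1^2p_2^2p_3^{\gamma}$. Its two-prime part gives only $K_{2,2}$, and it is in fact toroidal. Write $A_{ij}=\{p_1^i,p_2^j,p_3\}$. After pruning, the core consists of:
\begin{itemize}
\item the $8$-cycle $p_1,A_{11},p_2,A_{21},p_1^2,A_{22},p_2^2,A_{12}$;
\item the vertex $p_3$ joined to all four $A_{ij}$;
\item two paths from $p_1$ to $p_1^2$, through $p_2p_3$ and $p_2^2p_3$;
\item two paths from $p_2$ to $p_2^2$, through $p_1p_3$ and $p_1^2p_3$.
\end{itemize}
Everything except the last two paths is planar, so one handle suffices. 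Theorem \ref{planarcyclic} gives nonplanarity, so $g=1$, although this $n$ is not on the list.

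\textbf{The case $p_1p_2p_3p_4$.} Here the Euler count cannot give $g\ge 2$. After the same pruning and suppression, the core has $11$ vertices, $19$ edges and girth $4$, so the bound is only $\lceil 1/4\rceil=1$. This core does embed in the torus: it arises from a suitable quadrangular torus embedding of $K_{4,4}$ by splitting three vertices.

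So the statement itself must be corrected before any proof can be written. Your systematic core reduction (prune pendant paths, suppress degree-$2$ vertices, then apply Lemma \ref{genus} or explicit embeddings) is the right tool for finding the correct list.
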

\begin{theorem}\label{orgenus}
		The orientable genus, $g(\tilde{\Gamma}_\mathcal{H}(G))$ of $\tilde{\Gamma}_\mathcal{H}(G)$ is equal to 1  if and only if $G \cong \mathbb{Z}_{3^2} \times \mathbb{Z}_3$.
\end{theorem}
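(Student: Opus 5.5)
I will prove both directions using the planarity classification (Theorem \ref{planar}), the incidence-graph characterization of genus (Theorem \ref{incidencegraph}), the genus formula for complete bipartite graphs (Lemma \ref{genus}), and the monotonicity given by Lemma \ref{injgenus}.

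\textbf{Sufficiency.} Let $G \cong \mathbb{Z}_{3^2} \times \mathbb{Z}_3$. By the proof of Theorem \ref{edgecard} with $p=3$, the hypergraph has exactly four hyperedges $e_1,\dots,e_4$. Each contains the common core $\{<(0,1)>,<(3,1)>,<(3,2)>\}$, together with one private vertex taken from $<(3,0)>,<(1,0)>,<(1,1)>,<(1,2)>$. Hence $\mathcal{I}(\tilde{\Gamma}_\mathcal{H}(G))$ is $K_{3,4}$ with a pendant vertex attached to each of the four hyperedge vertices. Pendant vertices do not change the genus, so by Lemma \ref{genus} the genus equals $g(K_{3,4})=\lceil 2/4\rceil=1$. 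By Theorem \ref{incidencegraph}, $g(\tilde{\Gamma}_\mathcal{H}(G))=1$.

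\textbf{Necessity.} Assume $g(\tilde{\Gamma}_\mathcal{H}(G))=1$. Then $G$ is not in the planar list of Theorem \ref{planar}. Since subgroups of $G$ give injective hypergraph homomorphisms, Lemma \ref{injgenus} shows it suffices to exhibit, for every other non-planar $G$, a subgroup $K$ with $g(\tilde{\Gamma}_\mathcal{H}(K))\ge 2$. I will follow the case division in the converse of Theorem \ref{planar}.

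\begin{enumerate}
\item $\mathbb{Z}_{p^2}\times\mathbb{Z}_p$ with $p\ge5$. By the same structure as above, the incidence graph contains $K_{p,p+1}$, whose genus is $\lceil (p-2)(p-1)/4\rceil\ge 3$.
\item $\mathbb{Z}_{27}\times\mathbb{Z}_3$. Besides the $K_{3,4}$ coming from $\mathbb{Z}_9\times\mathbb{Z}_3$, there are extra hyperedges through $<(1,0)>$, $<(3,0)>$, $<(9,0)>$ and their translates. I expect to find a $K_{3,6}$ or $K_{4,5}$ subdivision, giving genus $\ge 2$.
\item $\mathbb{Z}_9\times\mathbb{Z}_9$. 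Its hyperedges contain the four order-$3$ subgroups in many combinations with order-$9$ cyclic subgroups. I will look for $K_{4,5}$ or $K_{3,7}$ in the incidence graph, as in Figure \ref{planar1}(c).
\item $\mathbb{Z}_{p}^3$ with $p\ge3$, and $\mathbb{Z}_2^3$. The $13$ (respectively $7$) order-$p$ subgroups lie in many hyperedges with order-$p^2$ subgroups. This gives large complete bipartite subgraphs, such as $K_{4,5}$, in the incidence graph.
\item $\mathbb{Z}_p\times\mathbb{Z}_p\times\mathbb{Z}_q$ with $p\ge 3$. The $p+1$ order-$p$ subgroups together with $H_q$ and the products $H_iH_q$ give hyperedges that all share $p$ of the order-$p$ subgroups. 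For $p=3$ this yields $K_{3,5}$-type structures plus additional edges. I will aim for $K_{4,5}$, which has genus $2$.
\item The $p=2$ families $\mathbb{Z}_{4}\times\mathbb{Z}_4$, $\mathbb{Z}_2\times\mathbb{Z}_2\times\mathbb{Z}_{q^3}$, $\mathbb{Z}_4\times\mathbb{Z}_2\times\mathbb{Z}_q$ and $\mathbb{Z}_2\times\mathbb{Z}_{2qr}$, and the products of distinct primes covered through Theorem \ref{genuscyclic}. For each I will count vertices, hyperedges and incidences of the incidence graph and apply the Euler bound $|E|\le 2(|V|-2+2g)$, valid because the incidence graph is bipartite and so has girth $\ge4$. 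An excess of incidences forces $g\ge2$.
\end{enumerate}

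The main obstacle is the small cases, $\mathbb{Z}_{27}\times\mathbb{Z}_3$, $\mathbb{Z}_4\times\mathbb{Z}_4$, $\mathbb{Z}_2^3$ and $\mathbb{Z}_3\times\mathbb{Z}_3\times\mathbb{Z}_q$. There a naive $K_{3,n}$ gives only genus $1$, so I must either locate a genuine $K_{4,5}$ or $K_{3,7}$ subdivision or carry out the Euler-count argument with an explicit list of all hyperedges. I will try the Euler count first, since it only requires counting.
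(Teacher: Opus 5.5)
\textbf{The necessity direction cannot be completed, because it is false.} Your sufficiency argument is correct and tidier than the paper's figure: the incidence graph of $\tilde{\Gamma}_\mathcal{H}(\mathbb{Z}_9\times\mathbb{Z}_3)$ is $K_{3,4}$ with a pendant vertex at each hyperedge, so its genus is $\lceil 2/4\rceil=1$. Your item~1 is also correct. Your plan breaks at item~5 with $p=3$.

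For $G=\mathbb{Z}_3\times\mathbb{Z}_3\times\mathbb{Z}_q$, Theorem~\ref{edgecard} gives the hyperedges $e_1=\{H_1,\dots,H_4,H'\}$, $e_{i+1}=\{H_j:j\neq i\}\cup\{H_iH'\}$ for $1\le i\le 4$, and $e_6=\{H,H'\}$. In the incidence graph each $H_iH'$ is pendant, and $H'$, $e_6$, $H$ form a pendant path at $e_1$. Deleting these leaves a bipartite graph on $H_1,\dots,H_4,e_1,\dots,e_5$ with $9$ vertices and $16$ edges. So it contains no subdivision of $K_{4,5}$, which needs at least $20$ edges. The Euler bound $16\le 2\cdot 9$ also holds, so counting gives nothing.

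In fact this graph is toroidal. Consider the closed walks $(H_1,e_4,H_2,e_5,H_3,e_2,H_4,e_3)$, $(H_1,e_5,H_2,e_2,H_3,e_3,H_4,e_4)$, $(H_1,e_3,H_3,e_5)$ and $(H_2,e_4,H_4,e_2)$. They traverse each of the $12$ edges between $\{H_1,\dots,H_4\}$ and $\{e_2,\dots,e_5\}$ exactly twice, in opposite directions. Their corners close up into a single cycle at every vertex. Hence they are the faces of an orientable cellular embedding with Euler characteristic $8-12+4=0$, i.e.\ on the torus. The first face contains all four $H_i$, so $e_1$ can be placed inside it, and the pendant trees are then added freely. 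Since $16>2\cdot 9-4$, the graph is not planar. Hence $g(\tilde{\Gamma}_\mathcal{H}(\mathbb{Z}_3\times\mathbb{Z}_3\times\mathbb{Z}_q))=1$ for every prime $q\neq 3$, contradicting the ``only if'' direction. The paper's Case~2.2.2 (``Clearly, it is not embeddable on a torus'') asserts exactly what fails here.

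The same happens in your item~6 for $\mathbb{Z}_2\times\mathbb{Z}_2\times\mathbb{Z}_{q^3}$ with $q$ odd. Write $H=\mathbb{Z}_2\times\mathbb{Z}_2\times\{0\}$ and $K_1<K_2<K_3$ for the subgroups of $\mathbb{Z}_{q^3}$. The hyperedges are $\{H_1,H_2,H_3,K_j\}$, $\{H,K_j\}$ and $\{H_k,H_l,H_iK_j\}$ with $\{i,k,l\}=\{1,2,3\}$. After removing the pendant vertices $H_iK_j$, the incidence graph is a subdivision of $K_{3,4}$ (the three hyperedges $\{H_1,H_2,H_3,K_j\}$ against $H_1,H_2,H_3,H$), plus a triangle on $H_1,H_2,H_3$ with every edge tripled. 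Its underlying simple graph is a subgraph of $K_7$, and parallel paths can be drawn alongside each other, so it embeds on the torus. It contains $K_{3,3}$, so its genus is $1$.

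You also take the planar list of Theorem~\ref{planar} on trust. For $\mathbb{Z}_2\times\mathbb{Z}_2\times\mathbb{Z}_{q^2}$ the paper omits the hyperedges $\{H,K_2\}$ and $\{H_k,H_l,H_iK_2\}$. With them the incidence graph contains a subdivision of $K_5$:
\begin{itemize}
\item the branch vertices are $H_1,H_2,H_3$ and the two hyperedges $\{H_1,H_2,H_3,K_j\}$;
\item each pair $H_k,H_l$ is joined through $\{H_k,H_l,H_iK_1\}$;
\item the two hyperedges are joined through $K_1,\{H,K_1\},H,\{H,K_2\},K_2$.
\end{itemize}
Without pendants the graph is this $K_5$ subdivision with the triangle edges doubled, so it too has genus $1$. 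Thus the genus-$1$ class is strictly larger than $\{\mathbb{Z}_9\times\mathbb{Z}_3\}$, and no choice of obstruction or Euler count can finish your argument.

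A smaller slip: in item~2 a $K_{3,6}$ would not suffice, since $g(K_{3,6})=1$ by Lemma~\ref{genus}. The incidence graph of $\tilde{\Gamma}_\mathcal{H}(\mathbb{Z}_{27}\times\mathbb{Z}_3)$ is $K_{3,7}$ with seven pendant vertices, of genus $2$, which does settle that case. The paper's case split omits this case altogether.
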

\begin{proof}
	If $G \cong \mathbb{Z}_{3^2} \times \mathbb{Z}_3$,  then Figure \ref{torus1}(a) depicts an embedding of $\mathcal{I}(\tilde{\Gamma}_\mathcal{H}(\mathbb{Z}_{3^2} \times \mathbb{Z}_3))$ on a torus and hence,  Theorem \ref{planar} implies that  $g(\tilde{\Gamma}_\mathcal{H}(G))=1$.\\
	Conversely, assume that $g(\mathcal{I}(\tilde{\Gamma}_\mathcal{H}(G)))$ is equal to 1. Then, by Theorem \ref{planar}, $G$ is not isomorphic to one of $\mathbb{Z}_{p_1} \times \mathbb{Z}_{p_1}$, $\mathbb{Z}_{2^\alpha} \times \mathbb{Z}_2$ or $\mathbb{Z}_2 \times \mathbb{Z}_2 \times \mathbb{Z}_{p_1^\beta}$ with $\beta =1,2$. By Lemma \ref{injgenus}, we have, if $K \leq G$ and $g(\tilde{\Gamma}_\mathcal{H}(K)) > 1$, then $g(\tilde{\Gamma}_\mathcal{H}(G)) > 1$. Let $K \cong \mathbb{Z}_{p_1^{\alpha_1}} \times \mathbb{Z}_{p_2^{\alpha_2}} \times \mathbb{Z}_{p_3^{\alpha_3}} \times \cdots \times \mathbb{Z}_{p_n^{\alpha_n}}\leq G$, where $p_i'$s are prime numbers and $\alpha_i'$s are non-negative integers. We consider different possibilities of $K$ with $g(\tilde{\Gamma}_\mathcal{H}(K)) > 1$ case by case.\\ 
	\textbf{Case 1.} All $p_i's$ are distinct\\
	\textbf{Case 1.1.}  If $K \cong \mathbb{Z}_{p_1^{\alpha_1}} \times \mathbb{Z}_{p_2^{\alpha_2}}$, with either $\alpha_1 \geq 7, \alpha_2 = 3$ or $\alpha_1 \geq 5, \alpha_2 = 4$, then, by Theorem 4.4 from \cite{sb3},  $g(\tilde{\Gamma}_\mathcal{H}(K)) > 1$.\\
	\textbf{Case 1.2.} If $K \cong \mathbb{Z}_{p_1^{\alpha_1}} \times \mathbb{Z}_{p_2} \times \mathbb{Z}_{p_3}$ with $\alpha_1 \geq 7$ or $K \cong \mathbb{Z}_{p_1^2} \times \mathbb{Z}_{p_2^2} \times \mathbb{Z}_{p_3}$, then, by Theorem 4.4 from \cite{sb3},  $g(\tilde{\Gamma}_\mathcal{H}(K)) > 1$.\\
	\textbf{Case 1.3.} If $\omega(n) \geq 4$, then, by Theorem \ref{genuscyclic}, $g(\tilde{\Gamma}_\mathcal{H}(K)) > 1$.\\
	\textbf{Case 2.} Two $p_i's$ are same\\
	\textbf{Case 2.1.}  If $K \cong \mathbb{Z}_{p_1^{\alpha_1}} \times \mathbb{Z}_{p_1^{\alpha_2}}$, then consider the following cases:\\
	\textbf{Case 2.1.1.} If either $p_1=2$ or $p_1=3$ and $\alpha_1, \alpha_2 \geq 2$, then $\tilde{\Gamma}_\mathcal{H}(\mathbb{Z}_{2^2} \times \mathbb{Z}_{2^2})$ is a subhypergraph of $\tilde{\Gamma}_\mathcal{H}(K)$ and  Figure \ref{torus1}(b) depicts a subgraph of $\mathcal{I}(\tilde{\Gamma}_\mathcal{H}(\mathbb{Z}_{2^2} \times \mathbb{Z}_{2^2}))$. Clearly, it is not embeddable on a torus and hence, by Theorem \ref{incidencegraph}, $g(\tilde{\Gamma}_\mathcal{H}(K)) > 1$.\\
	\textbf{Case 2.1.2.} If $p_1 \geq 5$, then $ \mathbb{Z}_{p_1^2} \times \mathbb{Z}_{p_1}$, where $p_1 \geq 5$ is a prime, is a subgroup of $K$. Consider the hyperedges  $e_1,e_2,e_3,e_4,e_5$ and $e_6$ with  $e_1 \supseteq \{<(0,1)>, <(p,1)>,<(p,2)>,<(p,3),<(p,4)>,<(p,0)>\}, e_2 \supseteq \{<(0,1)>, <(p,1)>,<(p,2)>,<(p,3),<(p,4)>,<(1,0)>\}, e_3 \supseteq \{<(0,1)>, <(p,1)>,<(p,2)>,<(p,3),<(p,4)>,<(1,1)>\}, e_4 \supseteq \{<(0,1)>, <(p,1)>,<(p,2)>,<(p,3),<(p,4)>,<(1,2)>\}, e_5 \supseteq \{<(0,1)>, <(p,1)>,<(p,2)>,<(p,3),<(p,4)>,<(1,3)>\}$ and $e_6 \supseteq \{<(0,1)>, <(p,1)>,<(p,2)>,<(p,3),<(p,4)>,<(1,4)>\}$. Thus,  $\mathcal{I}(\tilde{\Gamma}_\mathcal{H}(K))$ contains a torus obstruction as shown in Figure \ref{torus1}(c) and consequently,  $g(\tilde{\Gamma}_\mathcal{H}(K)) > 1$.\\
	\textbf{Case 2.2.}  If $K \cong \mathbb{Z}_{p_1} \times \mathbb{Z}_{p_1} \times \mathbb{Z}_{p_2^\alpha}$, then consider the following cases:\\
	\textbf{Case 2.2.1.} If $p_1=2$ and $\alpha \geq 3$, then $\mathbb{Z}_2 \times \mathbb{Z}_2 \times \mathbb{Z}_{p_2^3}$ is a subgroup of $K$.  Figure \ref{torus1}(d) depicts a drawing of $\mathcal{I}(\tilde{\Gamma}_\mathcal{H}( \mathbb{Z}_2 \times \mathbb{Z}_2 \times \mathbb{Z}_{p_2^3}))$ on a torus, where $H_1=<(1,0,0)>, H_2=<(0,1,0)>, H_3=<(1,1,0)>, H=\mathbb{Z}_2 \times \mathbb{Z}_2 \times \{0\}, K_1=<(0,0,p_2^2)>, K_2=<(0,0,p_2)>$ and $K_3=<(0,0,1)>$. Clearly, $\mathcal{I}(\tilde{\Gamma}_\mathcal{H}( \mathbb{Z}_2 \times \mathbb{Z}_2 \times \mathbb{Z}_{p_2^3}))$ is not embeddable on a torus and hence, by Theorem \ref{incidencegraph}, $g(\tilde{\Gamma}_\mathcal{H}(K)) > 1$.\\
			\begin{figure}
		[htbp!]
		\centering
		\subfloat[Embedding of $\mathcal{I}(\tilde{\Gamma}_\mathcal{H}(\mathbb{Z}_{3^2} \times \mathbb{Z}_3))$ on a torus]{\begin{tikzpicture}[scale=.5, every node/.style={circle, fill=black, minimum size=4pt, inner sep=0pt}]
				\draw[thick] (-6,-5) rectangle (3,4); 
				

				\node[label=below:\tiny{${\langle(0,1)\rangle}$}] (u1) at (-1,-5) {};
				\node[label=above:\tiny{${\langle(0,1)\rangle}$}] (u2) at (-1,4) {};
				\node[label=right:\tiny{${\langle(3,1)\rangle}$}] (u3) at (3,0) {};
				\node[label=left:\tiny{${\langle(3,1) \rangle}$}] (u4) at (-6,0) {};
				\node[label=left:\tiny{${\langle(3,2)\rangle}$}] (u5) at (-3,0) {};
				\node[label=above:\tiny{${\langle(3,0)\rangle}$}] (u6) at (-4,2) {};
				\node[label=above:\tiny{${\langle(1,0)\rangle}$}] (u7) at (1,2) {};
				\node[label=left:\tiny{${\langle(1,1)\rangle}$}] (u8) at (-4,-3) {};
				\node[label=right:\tiny{${\langle(1,2)\rangle}$}] (u9) at (1,-3) {};
				
				\node[label=above:\tiny{${e_1}$}] (v1) at (-2.5,1.5) {};
				\node[label=below:\tiny{${e_2}$}] (v2) at (-1,0) {};
				\node[label=below:\tiny{${e_3}$}] (v3) at (-3,-3) {};
				\node[label=above:\tiny{${e_4}$}] (v4) at (0,-2) {};
				\draw (u2) -- (v1);
				\draw (u4) -- (v1);
				\draw (u5) -- (v1);
				\draw (u6) -- (v1);
				\draw (u2) -- (v2);
				\draw (u3) -- (v2);
				\draw (u5) -- (v2);
				\draw (u7) -- (v2);
				\draw (u1) -- (v3);
				\draw (u4) -- (v3);
				\draw (u5) -- (v3);
				\draw (u8) -- (v3);
				\draw (u5) -- (v4);
				\draw (u3) -- (v4);
				\draw (u1) -- (v4);
				\draw (u9) -- (v4);
				
				
				
				
				\draw[->,] (-2,4) -- (-1,4) [right] {};
				\draw[->,] (-2,-5) -- (-1,-5) [right] {};
				\draw[->,] (3,0) -- (3,1) [right] {};
				\draw[->,] (-6,0) -- (-6,1) [right] {};
		\end{tikzpicture}}
		\hspace{.2cm}
		\subfloat[A subgraph of  $\mathcal{I}(\tilde{\Gamma}_\mathcal{H}(\mathbb{Z}_{2^2} \times \mathbb{Z}_{2^2}))$ on a torus]{\begin{tikzpicture}[scale=.5, every node/.style={circle, fill=black, minimum size=4pt, inner sep=0pt}]
				\draw[thick] (-6,-5) rectangle (5,4); 
				

				\node[label=above:\tiny{${\langle(0,2)\rangle}$}] (u1) at (-4,4) {};
				\node[label=below:\tiny{${\langle(0,2)\rangle}$}] (u2) at (-4,-5) {};
				\node[label=above:\tiny{${\langle(2,0)\rangle}$}] (u3) at (-.5,4) {};
				\node[label=below:\tiny{${\langle(2,0)\rangle}$}] (u4) at (-.5,-5) {};
				\node[label=above:\tiny{${\langle(2,2)\rangle}$}] (u5) at (2,4) {};
				\node[label=below:\tiny{${\langle(2,2)\rangle}$}] (u6) at (2,-5) {};
				\node[label=left:\tiny{${\langle(1,-1)\rangle}$}] (u8) at (0,-2) {};
				\node[label=right:\tiny{${\langle(3,-1)\rangle}$}] (u9) at (-3,-4) {};
				\node[label=right:\tiny{${\langle(1,0)\rangle}$}] (u10) at (-.2,0.4) {};
				\node[label=right:\tiny{${\langle(2,-1)\rangle}$}] (u11) at (0,3.5) {};
				
				\node[label=left:\tiny{${e_1}$}] (v1) at (-2.5,1) {};
				\node[label=left:\tiny{${e_2}$}] (v2) at (-4,0) {};
				\node[label=above:\tiny{${e_3}$}] (v3) at (-3,-2) {};
				\node[label=left:\tiny{${e_4}$}] (v4) at (-1,0) {};
				\node[label=left:\tiny{${e_5}$}] (v5) at (-1,-1) {};
				\node[label=left:\tiny{${e_7}$}] (v7) at (-1,2.5) {};
				\node[label=right:\tiny{${e_{10}}$}] (v10) at (4,-2.5) {};

				\draw (u1) -- (v1);
				\draw (u3) -- (v1);
				\draw (u5) -- (v1);
				\draw (u2) -- (v2);
				\draw (u4) -- (v2);
				\draw (u8) -- (v2);
				\draw (u2) -- (v3);
				\draw (u4) -- (v3);
				\draw (u9) -- (v3);
				\draw (u1) to[out=270, in=160](v4);
				\draw (u5) -- (v4);
				\draw (u10) -- (v4);
				\draw (u1) to[out=240, in=160](v5);
				\draw (u5) to[out=300, in=30](v5);
				\draw (u3) -- (v7);
				\draw (u5) -- (v7);
				\draw (u11) -- (v7);
				\draw (u10) -- (v10);
				\draw (u9) -- (v10);
				\draw (u11) -- (v10);

				
				
				\draw[->,] (-2,4) -- (-1,4) [right] {};
				\draw[->,] (-2,-5) -- (-1,-5) [right] {};
				\draw[->,] (5,0) -- (5,1) [right] {};
				\draw[->,] (-6,0) -- (-6,1) [right] {};
		\end{tikzpicture}}
		\hspace{.2cm}
		\subfloat[Torus obstruction in $\mathcal{I}(\tilde{\Gamma}_\mathcal{H}(\mathbb{Z}_{p_1^2} \times \mathbb{Z}_{p_1}))$]{\begin{tikzpicture}[scale=1, rotate=90,  every node/.style={circle, fill=black, minimum size=6pt, inner sep=0pt}]
				
				\node[label=left:{\tiny{$\langle(p,1)\rangle$}}] (u1) at (0,4) {};
				\node[label=right:{\tiny{$\langle(0,1)\rangle$}}] (u2) at (-0.5,1) {};
				\node[label=right:{\tiny{$\langle(p,2)\rangle$}}] (u3) at (0,-2) {};
				\node[label=left:{\tiny{$\langle(p,3)\rangle$}}] (u4) at (-.5,4) {};
				\node[label=right:{\tiny{$\langle(p,4)\rangle$}}] (u5) at (-.5,-2) {};
				\node[label=above:\tiny{$e_1$}] (v1) at (1,4) {};
				\node[label=above:\tiny{$e_2$}] (v2) at (1,1) {};
				\node[label=above:\tiny{$e_3$}] (v3) at (1,-2) {};
				\node[label=below:\tiny{$e_4$}] (v4) at (-1.5,4) {};
				\node[label=below:\tiny{$e_5$}] (v5) at (-1.5,1) {};
				\node[label=below:\tiny{$e_6$}] (v6) at (-1.5,-2) {};
				\draw (u1) -- (v1);
				\draw (u1) -- (v2);
				\draw (u1) -- (v3);
				\draw (u2) -- (v1);
				\draw (u2) -- (v2);
				\draw (u2) -- (v3);
				\draw (u3) -- (v3);
				\draw (u3) -- (v2);
				\draw (u3) -- (v1);
				\draw (u4) -- (v4);
				\draw (u4) -- (v5);
				\draw (u4) -- (v6);
				\draw (u5) -- (v4);
				\draw (u5) -- (v5);
				\draw (u5) -- (v6);
				\draw (u2) -- (v4);
				\draw (u2) -- (v5);
				\draw (u2) -- (v6);
		\end{tikzpicture}}
		\hspace{.2cm}
		\subfloat[$\mathcal{I}(\tilde{\Gamma}_\mathcal{H}(\mathbb{Z}_{2} \times \mathbb{Z}_2 \times \mathbb{Z}_{p_2^3}))$ on a torus]{\begin{tikzpicture}[scale=.7, every node/.style={circle, fill=black, minimum size=4pt, inner sep=0pt}]
				\draw[thick] (-6,-5) rectangle (7,4); 
				

				\node[label=below:\tiny{${H_1}$}] (u1) at (1,-5) {};
				\node[label=above:\tiny{${H_1}$}] (u2) at (1,4) {};
				\node[label=right:\tiny{${H_2}$}] (u3) at (7,0) {};
				\node[label=left:\tiny{${H_2}$}] (u4) at (-6,0) {};
				\node[label=above:\tiny{${H_3}$}] (u5) at (-2,4) {};
				\node[label=above:\tiny{${H_3}$}] (u6) at (-2,-5) {};
				\node[label=above:\tiny{${K_1}$}] (u7) at (-1.5,0) {};
				\node[label=right:\tiny{${K_2}$}] (u8) at (-.8,-1) {};
				\node[label=above:\tiny{${K_3}$}] (u9) at (-2,-2) {};
				\node[label=right:\tiny{${H_3K_1}$}] (u10) at (3,2) {};
				\node[label=right:\tiny{${H_3K_2}$}] (u11) at (3,-2) {};
				\node[label=right:\tiny{${H_3K_3}$}] (u12) at (5,2) {};
				\node[label=right:\tiny{${H_2K_1}$}] (u13) at (0,3) {};
				\node[label=right:\tiny{${H_2K_2}$}] (u14) at (0,2) {};
				\node[label=right:\tiny{${H_2K_3}$}] (u15) at (1,1) {};
				\node[label=right:\tiny{${H_1K_1}$}] (u16) at (-5,3.5) {};
				\node[label=right:\tiny{${H_1K_2}$}] (u17) at (-3.5,2) {};
				\node[label=right:\tiny{${H_1K_3}$}] (u18) at (-5.5,-4) {};
				\node[label=above:\tiny{${H}$}] (u19) at (-.8,0.5) {};
				
				\node[label=right:\tiny{${e_1}$}] (v1) at (-2.5,1) {};
				\node[label=left:\tiny{${e_2}$}] (v2) at (0,0) {};
				\node[label=below:\tiny{${e_3}$}] (v3) at (-3,-3) {};
				\node[label=left:\tiny{${e_4}$}] (v4) at (2,2) {};
				\node[label=above:\tiny{${e_5}$}] (v5) at (2,-2) {};
				\node[label=above:\tiny{${e_6}$}] (v6) at (3,3) {};
				\node[label=above:\tiny{${e_7}$}] (v7) at (-1,3) {};
				\node[label=above:\tiny{${e_8}$}] (v8) at (-.5,2) {};
				\node[label=below:\tiny{${e_9}$}] (v9) at (0,1) {};
				\node[label=left:\tiny{${e_{10}}$}] (v10) at (-5,2.5) {};
				\node[label=left:\tiny{${e_{11}}$}] (v11) at (-4.5,2) {};
				\node[label=left:\tiny{${e_{12}}$}] (v12) at (-5,-2) {};
				\node[label=right:\tiny{${e_{13}}$}] (v13) at (-2,1.5) {};
				\node[label=left:\tiny{${e_{14}}$}] (v14) at (-1,-.5) {};
				\node[label=left:\tiny{${e_{15}}$}] (v15) at (-3,-1.8) {};
				
				\draw (u1) -- (v1);
				\draw (u4) -- (v1);
				\draw (u5) -- (v1);
				\draw (u7) -- (v1);
				\draw (u1) -- (v2);
				\draw (u3) -- (v2);
				\draw (u5) -- (v2);
				\draw (u8) -- (v2);
				\draw (u1) -- (v3);
				\draw (u4) -- (v3);
				\draw (u6) -- (v3);
				\draw (u9) -- (v3);
				\draw (u2) -- (v4);
				\draw (u3) -- (v4);
				\draw (u10) -- (v4);
				\draw (u1) -- (v5);
				\draw (u3) -- (v5);
				\draw (u11) -- (v5);
				\draw (u2) -- (v6);
				\draw (u3) -- (v6);
				\draw (u12) -- (v6);
				\draw (u2) -- (v7);
				\draw (u5) -- (v7);
				\draw (u13) -- (v7);
				\draw (u2) -- (v8);
				\draw (u5) -- (v8);
				\draw (u14) -- (v8);
				\draw (u2) -- (v9);
				\draw (u5) -- (v9);
				\draw (u15) -- (v9);
				\draw (u4) -- (v10);
				\draw (u5) -- (v10);
				\draw (u16) -- (v10);
				\draw (u4) -- (v11);
				\draw (u5) -- (v11);
				\draw (u17) -- (v11);
				\draw (u4) -- (v12);
				\draw (u6) -- (v12);
				\draw (u18) -- (v12);
				\draw (u7) -- (v13);
				\draw (u19) -- (v13);
				\draw (u8) -- (v14);
				\draw (u19) -- (v14);
				\draw (u9) -- (v15);
				\draw (u19) -- (v15);
				
				
				
				\draw[->,] (-2,4) -- (-1,4) [right] {};
				\draw[->,] (-2,-5) -- (-1,-5) [right] {};
				\draw[->,] (7,0) -- (7,1) [right] {};
				\draw[->,] (-6,0) -- (-6,1) [right] {};
		\end{tikzpicture}}
		\caption{}
		\label{torus1}
	\end{figure}
	\textbf{Case 2.2.2.} If $p_1 \geq 3$ and $\alpha$ is a positive integer, then $\mathbb{Z}_{p_1} \times \mathbb{Z}_{p_1} \times \mathbb{Z}_{p_2}$ is a subgroup of $K$. Figure \ref{torus2}(a) depicts a subgroup of $\mathcal{I}(\tilde{\Gamma}_\mathcal{H}(\mathbb{Z}_{p_1} \times \mathbb{Z}_{p_1} \times \mathbb{Z}_{p_2}))$ on a torus, where $H_1 =<(0,1,0)>, H_2=<(1,0,0)>, H_3=<(1,1,0)>, H=\mathbb{Z}_{p_1} \times \mathbb{Z}_{p_1} \times \{0\}, H'=\{0\} \times \{0\} \times \mathbb{Z}_{p_2}$. Clearly, $\mathcal{I}(\tilde{\Gamma}_\mathcal{H}(\mathbb{Z}_{p_1} \times \mathbb{Z}_{p_1} \times \mathbb{Z}_{p_2}))$ is not embeddable on a torus and hence, by Theorem \ref{incidencegraph},  $g(\tilde{\Gamma}_{\mathcal{H}}(K)) > 1$.\\
	\textbf{Case 2.3.} If $K \cong \mathbb{Z}_{p_1} \times \mathbb{Z}_{p_1} \times \mathbb{Z}_{p_2} \times \mathbb{Z}_{p_3}$, then Figure \ref{torus2}(b) depicts a subgraph of $\mathcal{I}(\tilde{\Gamma}_\mathcal{H}(K))$. Clearly, it is not embeddable on a torus and hence, by Theorem \ref{incidencegraph}, $g(\tilde{\Gamma}_\mathcal{H}(K)) > 1$.\\
	\textbf{Case 3}  Three $p_i'$s are same\\
	\textbf{Case 3.1.} If $ K \cong \mathbb{Z}_{p_1^{\alpha_1}} \times \mathbb{Z}_{p_1^{\alpha_2}} \times \mathbb{Z}_{p_1^{\alpha_3}}$, then  $\tilde{\Gamma}_\mathcal{H}(\mathbb{Z}_2 \times \mathbb{Z}_2 \times \mathbb{Z}_2)$ is a subhypergraph of $\tilde{\Gamma}_\mathcal{H}(K)$. Figure \ref{torus2}(c) depicts a drawing of $\mathcal{I}(\tilde{\Gamma}_{\mathcal{H}}(\mathbb{Z}_2 \times \mathbb{Z}_2 \times \mathbb{Z}_2))$ on a torus. Clearly, $\mathcal{I}(\tilde{\Gamma}_{\mathcal{H}}(\mathbb{Z}_2 \times \mathbb{Z}_2 \times \mathbb{Z}_2))$ is not embeddable on a torus and hence, $g(\tilde{\Gamma}_\mathcal{H}(K) > 1$.\\
	If $G$ is not isomorphic to $\mathbb{Z}_{p_1} \times \mathbb{Z}_{p_1}$, $\mathbb{Z}_{2^\alpha} \times \mathbb{Z}_2$ or $\mathbb{Z}_2 \times \mathbb{Z}_2 \times \mathbb{Z}_{p_1^\beta}$ with $\beta =1,2$ or $\mathbb{Z}_{3^2} \times \mathbb{Z}_3$, then $G$ contains a subgroup isomorphic to one of the above groups.  Hence, if $g(\tilde{\Gamma}_\mathcal{H}(G))$  is equal to 1, then $G \cong \mathbb{Z}_{3^2} \times \mathbb{Z}_3$.
	\begin{figure}[htbp!]
		\centering
		\subfloat[$\mathcal{I}(\tilde{\Gamma}_\mathcal{H}(\mathbb{Z}_{3} \times \mathbb{Z}_3 \times \mathbb{Z}_{p_2}))$ on a torus]{\begin{tikzpicture}[scale=.6, every node/.style={circle, fill=black, minimum size=4pt, inner sep=0pt}]
				\draw[thick] (-6,-5) rectangle (4,4); 
				

				\node[label=below:\tiny{${H_1}$}] (u1) at (1,-5) {};
				\node[label=above:\tiny{${H_1}$}] (u2) at (1,4) {};
				\node[label=right:\tiny{${H_2}$}] (u3) at (4,0) {};
				\node[label=left:\tiny{${H_2}$}] (u4) at (-6,0) {};
				\node[label=above:\tiny{${H_3}$}] (u5) at (-1,4) {};
				\node[label=left:\tiny{${H_4}$}] (u6) at (-6,2) {};
				\node[label=above:\tiny{${H_1 H^{'}}$}] (u7) at (1,3) {};
				\node[label=left:\tiny{${H_2H^{'}}$}] (u8) at (-4,-3) {};
				\node[label=right:\tiny{${H_3H^{'}}$}] (u9) at (2,-3) {};
				\node[label=right:\tiny{${H_4}$}] (u11) at (4,2) {};
				\node[label=above:\tiny{${H^{'}}$}] (u12) at (-3,3) {};
				\node[label=below:\tiny{${H_3}$}] (u13) at (-1,-5) {};
				\node[label=below:\tiny{${H_4H^{'}}$}] (u14) at (-3,0) {};
				\node[label=left:\tiny{${H}$}] (u15) at (-4,2) {};
				
				\node[label=right:\tiny{${e_1}$}] (v1) at (-2.5,1) {};
				\node[label=below:\tiny{${e_2}$}] (v2) at (0,0) {};
				\node[label=below:\tiny{${e_3}$}] (v3) at (-3,-3) {};
				\node[label=above:\tiny{${e_4}$}] (v4) at (1,-2) {};
				\node[label=below:\tiny{${e_5}$}] (v5) at (-2,-2) {};
				\node[label=above:\tiny{${e_6}$}] (v6) at (-4,3) {};
				
				\draw (u1) -- (v1);
				\draw (u4) -- (v1);
				\draw (u5) -- (v1);
				\draw (u6) -- (v1);
				\draw (u12) -- (v1);
				\draw (u3) -- (v2);
				\draw (u5) -- (v2);
				\draw (u11) -- (v2);
				\draw (u7) -- (v2);
				\draw (u1) -- (v3);
				\draw (u13) -- (v3);
				\draw (u6) -- (v3);
				\draw (u8) -- (v3);
				\draw (u11) -- (v4);
				\draw (u3) -- (v4);
				\draw (u1) -- (v4);
				\draw (u9) -- (v4);
				\draw (u1) -- (v5);
				\draw (u4) -- (v5);
				\draw (u5) -- (v5);
				\draw (u14) -- (v5);
				\draw (u15) -- (v6);
				\draw (u12) -- (v6);
				
				
				
				
				\draw[->,] (-2,4) -- (0,4) [right] {};
				\draw[->,] (-2,-5) -- (0,-5) [right] {};
				\draw[->,] (4,0) -- (4,1) [right] {};
				\draw[->,] (-6,0) -- (-6,1) [right] {};
		\end{tikzpicture}}
	\hspace{.2cm}
		\subfloat[A subgraph of $\mathcal{I}( \mathbb{Z}_{p_1} \times \mathbb{Z}_{p_1} \times \mathbb{Z}_{p_2} \times \mathbb{Z}_{p_3})$ on a torus]{\begin{tikzpicture}[scale=.6, every node/.style={circle, fill=black, minimum size=4pt, inner sep=0pt}]
				\draw[thick] (-6,-5) rectangle (7,4); 
				

				\node[label=below right:\tiny{${\langle(1,0,0,0)\rangle}$}] (u1) at (2,-5) {};
				\node[label=above right:\tiny{${\langle(1,0,0,0)\rangle}$}] (u2) at (2,4) {};
				\node[label=below:\tiny{${\langle(0,1,0,0)\rangle}$}] (u3) at (-1,-5) {};
				\node[label=above:\tiny{${\langle(0,1,0,0)\rangle}$}] (u4) at (-1,4) {};
				\node[label=right:\tiny{${\langle(1,1,0,0)\rangle}$}] (u5) at (7,1) {};
				\node[label=left:\tiny{${\langle(1,1,0,0)\rangle}$}] (u6) at (-6,1) {};
				\node[label=right:\tiny{${\langle(0,0,1,0)\rangle}$}] (u7) at (7,-1) {};
				\node[label=left:\tiny{${\langle(0,0,1,0)\rangle}$}] (u8) at (-6,-1) {};
				\node[label=right:\tiny{${\langle(0,0,1,1)\rangle}$}] (u9) at (1.5,-2) {};
				\node[label=below left:\tiny{${\langle(0,0,0,1)\rangle}$}] (u10) at (-4,-5) {};
				\node[label=above left:\tiny{${\langle(0,0,0,1)\rangle}$}] (u11) at (-4,4) {};
				\node[label=left:\tiny{${\langle(1,1,0,0),(0,0,1,0)\rangle}$}] (u12) at (-4,-2) {};
				\node[label=left:\tiny{${\langle(0,1,0,0),(0,0,1,0)\rangle}$}] (u13) at (4,3) {};
				
				\node[label=below:\tiny{${e_1}$}] (v1) at (-2.5,1) {};
				\node[label=above right:\tiny{${e_2}$}] (v2) at (0,-1) {};
				\node[label=right:\tiny{${e_3}$}] (v3) at (-3,-2) {};
				\node[label=left:\tiny{${e_4}$}] (v4) at (-6,3) {};
				\node[label=right:\tiny{${e_4}$}] (v5) at (7,3) {};
				\node[label=right:\tiny{${e_5}$}] (v6) at (3,1) {};
				
				\draw (u2) -- (v1);
				\draw (u4) -- (v1);
				\draw (u6) -- (v1);
				\draw (u8) -- (v1);
				\draw (u11) -- (v1);
				\draw (u1) -- (v2);
				\draw (u3) -- (v2);
				\draw (u5) -- (v2);
				\draw (u9) -- (v2);
				\draw (u2) -- (v3);
				\draw (u3) -- (v3);
				\draw (u10) -- (v3);
				\draw (u12) -- (v3);
				\draw (u6) to[out=60, in=300](v4);
				\draw (u11) -- (v4);
				\draw (u2) -- (v5);
				\draw (u13) -- (v5);
				\draw (u2) -- (v6);
				\draw (u3) to[out=100, in=170](v6);
				\draw (u7) -- (v6);
				
				
				\draw[->,] (-2,4) -- (0,4) [right] {};
				\draw[->,] (-2,-5) -- (0,-5) [right] {};
				\draw[->,] (7,-1) -- (7,0) [right] {};
				\draw[->,] (-6,-1) -- (-6,0) [right] {};
		\end{tikzpicture}}
	\hspace{.2cm}
		\subfloat[ A subgraph of $\mathcal{I}(\tilde{\Gamma}_\mathcal{H}(\mathbb{Z}_{2} \times \mathbb{Z}_2 \times \mathbb{Z}_2))$ on a torus]{	\begin{tikzpicture}[scale=.5, every node/.style={circle, fill=black, minimum size=4pt, inner sep=0pt}]
				\draw[thick] (-6,-5) rectangle (7,4); 
				

				\node[label=right:\tiny{${\langle(1,0,0)\rangle}$}] (u1) at (2,3) {};
				\node[label=right:\tiny{${\langle(0,1,0)\rangle}$}] (u2) at (-2,3) {};				
				\node[label=left:\tiny{${\langle(0,0,1)\rangle}$}] (u3) at (-3.5,3) {};
				\node[label=left:\tiny{${\langle(1,1,0)\rangle}$}] (u4) at (-6,0) {};
				\node[label=right:\tiny{${\langle(1,1,0)\rangle}$}] (u5) at (7,0) {};
				\node[label=left:\tiny{${\langle(1,0,1)\rangle}$}] (u6) at (2,-3) {};
				\node[label=left:\tiny{${\langle(0,1,1)\rangle}$}] (u7) at (-1,-3) {};
				\node[label=left:\tiny{${\langle(1,1,1)\rangle}$}] (u8) at (-3.5,-3) {};
				
				\node[label=below:\tiny{${e_1}$}] (v1) at (-2,-5) {};
				\node[label=above:\tiny{${e_1}$}] (v2) at (-2,4) {};
				\node[label=left:\tiny{${e_2}$}] (v3) at (-2,0) {};
				\node[label=left:\tiny{${e_3}$}] (v4) at (2,1) {};
				\node[label=left:\tiny{${e_4}$}] (v5) at (4,2) {};
				\node[label=left:\tiny{${e_5}$}] (v6) at (4,-1) {};
				
				\draw (u1) -- (v2);
				\draw (u2) -- (v2);
				\draw (u3) -- (v2);
				\draw (u5) -- (v2);
				\draw (u6) -- (v1);
				\draw (u7) -- (v1);
				\draw (u8) -- (v1);
				\draw (u3) -- (v3);
				\draw (u6) -- (v3);
				\draw (u7) -- (v3);
				\draw (u8) -- (v3);
				\draw (u3) -- (v4);
				\draw (u5) -- (v4);
				\draw (u6) -- (v4);
				\draw (u2) -- (v4);
				\draw (u1) -- (v5);
				\draw (u5) -- (v5);
				\draw (u6) -- (v5);
				\draw (u8) -- (v5);
				\draw (u1) -- (v6);
				\draw (u3) -- (v6);
				\draw (u5) -- (v6);
				\draw (u7) -- (v6);

				\draw[->,] (-2,4) -- (-1,4) [right] {};
				\draw[->,] (-2,-5) -- (-1,-5) [right] {};
				\draw[->,] (7,0) -- (7,1) [right] {};
				\draw[->,] (-6,0) -- (-6,1) [right] {};
		\end{tikzpicture}}
		\caption{}
		\label{torus2}
	\end{figure}
\end{proof}
\begin{corollary}
 $\tilde{\Gamma}_\mathcal{H}(G)$ is toroidal if and only if $G \cong \mathbb{Z}_{p_1} \times \mathbb{Z}_{p_1}$, where $p_1$ is a prime or $ G \cong  Z_{2^\alpha} \times \mathbb{Z}_2$	or  $ G \cong \mathbb{Z}_2 \times \mathbb{Z}_2 \times \mathbb{Z}_{p_2^\beta}$, where $p_2$ is prime and  $\beta =1,2$, or $G \cong \mathbb{Z}_{3^2} \times \mathbb{Z}_3$.
\end{corollary}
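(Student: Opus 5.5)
The corollary follows by combining Theorem \ref{planar} with Theorem \ref{orgenus}. The only point to fix is the convention for ``toroidal''. In this section a hypergraph is toroidal when its incidence graph embeds on the torus, i.e.\ $g(\mathcal{I}(\tilde{\Gamma}_\mathcal{H}(G))) \leq 1$. By Theorem \ref{incidencegraph} this is the same as $g(\tilde{\Gamma}_\mathcal{H}(G)) \leq 1$. Thus $\tilde{\Gamma}_\mathcal{H}(G)$ is toroidal exactly when $g(\tilde{\Gamma}_\mathcal{H}(G)) = 0$ or $g(\tilde{\Gamma}_\mathcal{H}(G)) = 1$, and I will treat these two cases separately.

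For the ``if'' direction, take $G$ to be one of the first three families: $\mathbb{Z}_{p_1} \times \mathbb{Z}_{p_1}$, $\mathbb{Z}_{2^\alpha} \times \mathbb{Z}_2$, or $\mathbb{Z}_2 \times \mathbb{Z}_2 \times \mathbb{Z}_{p_2^\beta}$ with $\beta = 1,2$. Theorem \ref{planar} gives planarity, so the incidence graph embeds in the sphere. Such an embedding extends to the torus, for instance by attaching a handle inside one face. Hence these hypergraphs are toroidal. If $G \cong \mathbb{Z}_{3^2} \times \mathbb{Z}_3$, Theorem \ref{orgenus} gives genus $1$ directly, and the embedding in Figure \ref{torus1}(a) is an explicit toroidal embedding.

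For the ``only if'' direction, suppose $g(\tilde{\Gamma}_\mathcal{H}(G)) \leq 1$. If the genus is $0$, then $\tilde{\Gamma}_\mathcal{H}(G)$ is planar by Theorem \ref{planarhyper}. Theorem \ref{planar} then places $G$ in one of the three planar families. If the genus is $1$, Theorem \ref{orgenus} forces $G \cong \mathbb{Z}_{3^2} \times \mathbb{Z}_3$. Together these two cases give exactly the list in the statement.

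There is no real obstacle here; the work is in Theorems \ref{planar} and \ref{orgenus}. The one thing to get right is stating clearly that toroidal means genus at most one, not exactly one. Otherwise the planar groups would be wrongly excluded, since Theorem \ref{orgenus} alone covers only the genus-$1$ case.
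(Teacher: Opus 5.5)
Your proposal is the intended argument. The paper gives no separate proof: the corollary is just the union of the genus-$0$ classification (Theorem \ref{planar}) and the genus-$1$ classification (Theorem \ref{orgenus}), with ``toroidal'' meaning genus at most one, exactly as you set it up.

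One step needs correcting, though the fault lies in the statement rather than your reasoning. Theorem \ref{planar} covers $\mathbb{Z}_2\times\mathbb{Z}_2\times\mathbb{Z}_{p_2^\beta}$ only for \emph{odd} $p_2$, so your ``if'' direction says nothing when $p_2=2$. That case is in fact false: $\mathbb{Z}_2\times\mathbb{Z}_2\times\mathbb{Z}_2$ and $\mathbb{Z}_2\times\mathbb{Z}_2\times\mathbb{Z}_4$ both contain $\mathbb{Z}_2^3$, which is not toroidal. The paper asserts this in Case 3.1 of Theorem \ref{orgenus}, and it can be checked directly. In the incidence graph of $\tilde{\Gamma}_\mathcal{H}(\mathbb{Z}_2^3)$, delete the seven order-$4$ subgroups, which have degree one. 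What remains is a connected bipartite graph with $15$ vertices and $35$ edges, violating the toroidal bound $E\le 2V$ for bipartite graphs.

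Your ``only if'' direction is unaffected, since Theorem \ref{planar} already returns odd $p_2$. So what you have proved is the corollary with $p_2$ an odd prime, and you should state it that way.
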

We rely on the following results to characterize those hypergraphs 
$\tilde{\Gamma}_\mathcal{H}(G)$ that are embeddable on the projective plane.
\begin{theorem}\label{incidencegraph2}\cite{walsh1975hypermaps}
	For any hypergraph $\mathcal{H}$, $\tilde{g}(\mathcal{H})=\tilde{g}(\mathcal{I}(\mathcal{H}))$.
\end{theorem}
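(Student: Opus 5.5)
The plan is to argue exactly as for the orientable statement, Theorem \ref{incidencegraph}, since nothing there used orientability. The first step is to fix the notion of an embedding of a hypergraph $\mathcal{H}=(X,\mathcal{D})$ on a surface $N_k$, in Walsh's sense \cite{walsh1975hypermaps}. Each vertex of $X$ is drawn as a point. Each hyperedge $D\in\mathcal{D}$ is drawn as a closed disk $\Delta_D$ whose boundary passes through exactly the points of $D$. Distinct disks may meet only at common vertex points. Then $\tilde{g}(\mathcal{H})$ is the least $k$ for which such a drawing exists on $N_k$. I will prove the inequalities $\tilde{g}(\mathcal{H})\le\tilde{g}(\mathcal{I}(\mathcal{H}))$ and $\tilde{g}(\mathcal{I}(\mathcal{H}))\le\tilde{g}(\mathcal{H})$ by transferring drawings between the two objects on the same surface.

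For $\tilde{g}(\mathcal{I}(\mathcal{H}))\le\tilde{g}(\mathcal{H})$, start from a disk drawing of $\mathcal{H}$ on $N_k$. In the interior of each disk $\Delta_D$, choose a point $x_D$ for the edge-vertex. Join $x_D$ to each vertex of $D$ on $\partial\Delta_D$ by radial arcs inside $\Delta_D$. Because disk interiors are pairwise disjoint, these arcs meet only at their endpoints. This gives an embedding of the bipartite incidence graph $\mathcal{I}(\mathcal{H})$ on $N_k$.

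For $\tilde{g}(\mathcal{H})\le\tilde{g}(\mathcal{I}(\mathcal{H}))$, start from an embedding of $\mathcal{I}(\mathcal{H})$ on $N_k$. For each edge-vertex $x_D$, take a small regular neighbourhood of the closed star of $x_D$, namely $x_D$ together with its incident edges minus small neighbourhoods of the far endpoints. This is a disk, since a star is contractible. Thicken it slightly so that it touches each vertex of $D$ on its boundary. Stars of distinct edge-vertices share only original vertices, so these disks can be chosen pairwise disjoint away from $X$. That yields a disk drawing of $\mathcal{H}$ on $N_k$. Since every operation is local and takes place on the same surface, orientability plays no role, and the two minima coincide.

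The main obstacle is the second direction. The regular neighbourhoods of the stars must be disks that pass through the vertices of $D$ in the cyclic order given by the rotation at $x_D$, and they must not overlap near a shared vertex of $X$. I would handle this by working in a small disk chart around each vertex $v\in X$. In that chart the incident stars appear as disjoint sectors, so the thickenings can be taken as thin wedges inside these sectors, which meet only at $v$. Finally, because the paper itself defines a projective hypergraph through its incidence graph, the statement is partly definitional. The argument above confirms that this convention agrees with the geometric notion.
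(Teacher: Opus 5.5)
Your argument is correct, but it is not the paper's route. The paper proves nothing here: it imports the identity from Walsh, as it does its orientable twin, Theorem~\ref{incidencegraph}. In Walsh the identity comes out of the combinatorial equivalence between hypermaps and bipartite maps. The vertices and hyperedges of a hypermap become the two colour classes, the incidences (darts) become the edges, and faces go to faces, so the Euler characteristic is unchanged.

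That argument gives a genus-preserving bijection between cellular embeddings. To extract $\tilde{g}(\mathcal{H})=\tilde{g}(\mathcal{I}(\mathcal{H}))$ from it, one still needs that a minimum embedding of the (connected) incidence graph can be taken cellular. Also, a hypermap given by a pair of permutations lives on an orientable surface, so the crosscap version really rests on a non-orientable, flag-based form of the correspondence.

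Your direct transfer of drawings on a fixed $N_k$ uses neither cellularity nor orientability, so it covers $\tilde{g}$ exactly as it covers $g$. It is also what justifies the paper's convention that a hypergraph is projective when its incidence graph is. What you give up is any correspondence between embeddings beyond existence on the same surface. You must also take your disk-drawing notion as the definition of $\tilde{g}(\mathcal{H})$, which the paper never defines independently.

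When you write it up, add two conditions. Require that $\Delta_D$ contain no vertex point outside $D$; otherwise a radial arc could pass through an isolated vertex. Take the embedding of $\mathcal{I}(\mathcal{H})$ to be tame, so that the regular neighbourhoods and the wedge charts at each vertex exist.
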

\begin{lemma}\cite{bojan}\label{genus2} The non-orientable genus of a complete bi-partite graph is given by:
 $$\tilde{g}(K_{m,n})= \left\lceil \frac{(m-2)(n-2)}{2}\right\rceil, m,n \geq 2$$
\end{lemma}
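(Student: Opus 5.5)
The plan is to prove the two inequalities separately. The lower bound will come from Euler's formula. The upper bound will come from an explicit construction of embeddings, which is the classical result of Ringel that the paper cites from \cite{bojan}. Throughout we take $m,n\geq 2$. When $\min(m,n)=2$ the formula gives $0$, and $K_{2,n}$ is indeed planar, since it can be drawn as $n$ internally disjoint paths of length two between two vertices. So the interesting range is $m,n\geq 3$.

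\textbf{Lower bound.} Suppose $K_{m,n}$ is embedded in $N_k$. We may assume the embedding is cellular, since a minimum-genus embedding can always be taken cellular. Euler's formula then gives $V-E+F=2-k$, with $V=m+n$ and $E=mn$. The graph is bipartite and simple, so it has no odd cycles and no digons. Hence every face has boundary walk of length at least $4$, and counting edge--face incidences gives $4F\leq 2E$, that is, $F\leq mn/2$. Substituting,
$$k = 2-(m+n)+mn-F \;\geq\; 2-m-n+\tfrac{mn}{2} \;=\; \tfrac{(m-2)(n-2)}{2}.$$
Since $k$ is an integer, $k\geq\left\lceil (m-2)(n-2)/2\right\rceil$.

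\textbf{Upper bound.} Equality in the count holds exactly when the embedding is a \emph{quadrangulation}, meaning all faces are $4$-cycles. It holds up to rounding when all faces are quadrilaterals except for one hexagon, which is needed when $(m-2)(n-2)$ is odd. So I would construct such near-quadrangular embeddings of $K_{m,n}$ in $N_k$ with $k=\lceil (m-2)(n-2)/2\rceil$. The steps, in order, are these:
\begin{enumerate}
\item Verify the base cases directly. $K_{3,3}$ embeds in the projective plane (the M\"obius ladder picture), and $K_{3,4}$ embeds in $N_1$ as a quadrangulation. More generally, for $K_{3,n}$ and $K_{4,n}$ I would write down explicit rotation systems with edge signatures.
\item Set up an inductive step that passes from $K_{m,n}$ to $K_{m,n+2}$ while raising the crosscap number by exactly $m-2$. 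To do this, choose a family of faces through which every one of the $m$ vertices on the first side is visited. Insert the two new vertices inside these faces. Then add $m-2$ crosscaps (or cross-handles), each used to route the remaining edges while preserving the quadrilateral face structure.
\item Handle the parity of $n$ and of $m$ by starting from the appropriate base case. The hexagonal face in the odd case is carried along unchanged through the induction.
\end{enumerate}
An alternative route to the upper bound is to take Ringel's orientable quadrangulations of $K_{m,n}$, realised via current graphs, and convert them to non-orientable ones by twisting edges. This fails when the orientable bound is not tight enough, so I would use the inductive construction as the main line.

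The main obstacle is the upper-bound construction. One must exhibit, for all $m,n$, embeddings in which essentially every face is a $4$-cycle. Keeping the face structure quadrilateral under the inductive enlargement, with exactly $m-2$ additional crosscaps each time, requires careful bookkeeping of rotations and signatures. I would first attempt this via explicit rotation schemes indexed by residues modulo $4$, then verify the face-tracing in each residue class. This is the content of Ringel's theorem as presented in \cite{bojan}.
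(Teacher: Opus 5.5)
The paper gives no proof of this lemma; it is quoted from \cite{bojan}. Your lower bound is correct. You do not even need to pass to a cellular embedding: $V-E+F\ge 2-k$ holds for every embedding of a connected graph in $N_k$, and $2E\ge 4F$ holds because every face boundary of $K_{m,n}$ (bipartite, simple, minimum degree at least $2$) has length at least $4$.

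\textbf{The gap is the upper bound}, which is the whole content of the lemma; Steps 1--3 do not establish it. Step 2 restates the goal rather than achieving it. Adding two vertices of degree $m$ must create exactly $m$ new faces, all $4$-cycles, at the cost of exactly $m-2$ crosscaps. The phrase ``add $m-2$ crosscaps, each used to route the remaining edges while preserving the quadrilateral face structure'' does not say which faces receive the new vertices, where the crosscaps go, or why every new face is a $4$-cycle. Step 1 likewise postulates two infinite families of rotation systems that are never exhibited. As written, you have proved $\tilde g(K_{m,n})\ge\lceil (m-2)(n-2)/2\rceil$ and deferred the reverse inequality to the same citation the paper uses. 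That is acceptable in context, but then the citation, not your construction, carries the proof.

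To make your plan self-contained, use Bouchet's \emph{diamond sum}, which realizes Step 2 exactly. The construction is as follows.
\begin{itemize}
\item Take cellular quadrangular embeddings of $K_{m,n_1}$ in $S_1$ and $K_{m,n_2}$ in $S_2$, and pick vertices $x,y$ of degree $m$ in the two copies.
\item Take a closed disc $D_x$ containing $x$ and its $m$ edges, whose boundary meets the graph only in the neighbours $a_1,\dots,a_m$ of $x$. The boundary arc from $a_i$ to $a_{i+1}$ runs inside the face at the corner $a_ixa_{i+1}$. Choose $D_y$ likewise.
\item Delete the interiors and glue $\partial D_x$ to $\partial D_y$ so that the neighbours match in rotation order; relabel the $m$-side of the second copy to arrange this.
\end{itemize}
The result is $K_{m,n_1+n_2-2}$ embedded in $S_1\# S_2$. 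Each face $a_ixa_{i+1}b$ at $x$ merges with the face $a_iya_{i+1}b'$ at $y$ into the $4$-cycle $a_iba_{i+1}b'$, all other faces survive, and Euler genus is additive; a single hexagon in one summand simply stays a hexagon. In particular, $K_{m,n+2}$ is the diamond sum of $K_{m,n}$ and $K_{m,4}$ and costs exactly $m-2$ crosscaps, which is your inductive step.

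Everything then reduces to three base embeddings:
\begin{itemize}
\item $K_{3,3}$ in $N_1$, with three quadrilaterals and one hexagon;
\item $K_{3,4}$ in $N_1$;
\item $K_{4,4}$ in $N_2$, which is itself the diamond sum of two copies of $K_{3,4}$ along vertices of degree $4$.
\end{itemize}
Repeated sums with $K_{3,4}$ and $K_{4,4}$ give $K_{3,n}$ and $K_{4,n}$ for all $n\ge 3$, and sums with $K_{m,4}$ then give every $K_{m,n}$. The hexagon from $K_{3,3}$ appears only when $m$ and $n$ are both odd, exactly as the count requires. Every surface produced is non-orientable, since each summand is.
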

	\begin{theorem}\cite{sb3}\label{projcyclic}
	The non-orientable genus, $\tilde{g}(\tilde{\Gamma}_\mathcal{H}(\mathbb{Z}_n))$, of $\tilde{\Gamma}_\mathcal{H}(\mathbb{Z}_n)$ is equal to 1 if and only if  $n=p_1^{\alpha_1}p_2^3$ with $\alpha_1=3,4$ or $n=p_1^{\alpha_1}p_2p_3$ with $\alpha_1=3,4$, where $p_1,p_2$ and $p_3$ are distinct primes.
	
\end{theorem}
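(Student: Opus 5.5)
We would argue entirely inside the cyclic group, where the combinatorics is explicit. In $\mathbb{Z}_n$ there is exactly one subgroup for each divisor $d$ of $n$, and two subgroups meet trivially if and only if their orders are coprime. So the vertices of $\tilde{\Gamma}_\mathcal{H}(\mathbb{Z}_n)$ are the divisors $d\neq 1,n$ that miss at least one prime divisor of $n$. A hyperedge is a maximal family of pairwise coprime such divisors. By maximality, every prime $p_i$ dividing $n$ then divides exactly one member of the family; otherwise $p_i$ itself could be added. Hence hyperedges correspond to a partition of $\{p_1,\dots,p_k\}$ into at least two blocks, together with a choice of exponent $1\le j\le\alpha_i$ for each prime in each block. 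This gives a concrete description of the incidence graph $\mathcal{I}(\tilde{\Gamma}_\mathcal{H}(\mathbb{Z}_n))$. By Theorem \ref{incidencegraph2}, it suffices to compute $\tilde g$ of that graph.

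First we would discard the planar cases using Theorem \ref{planarcyclic}: $n=p_1^{\alpha}p_2$, $p_1^{\alpha}p_2^2$, $p_1p_2p_3$ and $p_1^2p_2p_3$ all have $\tilde g=0$. Divisibility gives an injective hypergraph homomorphism $\tilde{\Gamma}_\mathcal{H}(\mathbb{Z}_m)\to\tilde{\Gamma}_\mathcal{H}(\mathbb{Z}_n)$ whenever $m\mid n$ with the same prime set. Combined with Lemma \ref{injgenus}, this makes $\tilde g$ monotone, so it remains to find the boundary cases.

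\textbf{Two primes.} For $n=p_1^{a}p_2^{b}$ every hyperedge has the form $\{p_1^i,p_2^j\}$. The hypergraph is therefore the graph $K_{a,b}$, and its incidence graph is a subdivision of $K_{a,b}$. Lemma \ref{genus2} gives $\tilde g=\lceil (a-2)(b-2)/2\rceil$. For $b=3$ this equals $1$ exactly when $a=3,4$, and it is $\ge 2$ for $a\ge5$. For $a,b\ge4$ it is $\ge2$.

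\textbf{Three primes.} For $n=p_1^{a}p_2p_3$, the hyperedges are $\{p_1^i,p_2,p_3\}$, $\{p_1^i,p_2p_3\}$, $\{p_1^ip_2,p_3\}$ and $\{p_1^ip_3,p_2\}$. For the lower bound when $a\ge5$, we would exhibit a subdivided $K_{3,5}$ with one side $\{p_2,p_3,p_2p_3\}$. The other side consists of the hyperedge vertices $e_i=\{p_1^i,p_2,p_3\}$, $i\le5$. Each $e_i$ joins $p_2p_3$ along the path $e_i - p_1^i - \{p_1^i,p_2p_3\} - p_2p_3$. This yields $\tilde g\ge\lceil 3\cdot3/2\rceil>1$. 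For $a=3,4$ we need $\tilde g\ge1$, which holds by non-planarity (Theorem \ref{planarcyclic}), together with an explicit drawing of the incidence graph for $a=4$ on the projective plane; the case $a=3$ is then a subgraph. The remaining three-prime shapes must also be ruled out: $p_1^2p_2^2p_3$, $p_1^ap_2^2p_3$ with $a\ge3$, and so on. For these we would find similar subdivided $K_{3,5}$ or $K_{4,4}$ configurations, using the extra exponent of $p_2$ to supply the extra branch vertices.

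\textbf{Four or more primes.} Here we would contain $\mathbb{Z}_{p_1p_2p_3p_4}$. Its incidence graph already contains a subdivided $K_{4,4}$, with the four primes on one side and suitable two-block and singleton-block hyperedges on the other.

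The main obstacle is the positive direction. One must produce genuine projective-plane embeddings of the incidence graphs for $p_1^4p_2p_3$ (and check $p_1^3p_2p_3$), and also verify that the obstruction for $p_1^2p_2^2p_3$ really has $\tilde g\ge2$. For the embedding we would try the standard disc-with-antipodal-identification picture, placing the many degree-$2$ hyperedge vertices $\{p_1^i,p_2p_3\}$ radially around $p_2p_3$. The crosscap would be used only to route the $K_{3,4}$-like core formed by $p_2$, $p_3$ and the $\{p_1^i,p_2,p_3\}$ hyperedges.
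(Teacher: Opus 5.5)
The paper gives no proof of this statement; it only quotes it. Judged on its own, your plan fails at the step you were unsure about, and that step cannot be repaired. The case $n=p_1^2p_2^2p_3$ cannot be ruled out, because its non-orientable genus is $1$, so the statement as quoted is false.

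Use your hyperedge description and put $x_i=p_1^i$, $y_j=p_2^j$, $z=p_3$ and $a_{ij}=\{x_i,y_j,z\}$. In $\mathcal{I}(\tilde{\Gamma}_\mathcal{H}(\mathbb{Z}_n))$:
each $p_1^ip_2^j$ is a leaf attached to $\{z,p_1^ip_2^j\}$;
each $p_2^jp_3$ lies only in $\{x_1,p_2^jp_3\}$ and $\{x_2,p_2^jp_3\}$;
each $p_1^ip_3$ lies only in $\{y_1,p_1^ip_3\}$ and $\{y_2,p_1^ip_3\}$.
So, up to pendant paths and subdivision, the incidence graph consists of each $a_{ij}$ joined to $x_i,y_j,z$, plus a doubled edge $x_1x_2$ and a doubled edge $y_1y_2$. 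Only $z$ has degree $\ge 5$, and only five nodes have degree $\ge 4$, so the subdivided $K_{3,5}$ or $K_{4,4}$ you hope for cannot exist.

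In fact the graph embeds in the projective plane. In the disc model, put $z$ at the centre and the $8$-cycle $a_{11}x_1a_{12}y_2a_{22}x_2a_{21}y_1$ at equally spaced angles on a concentric circle, with the spokes $za_{ij}$ inside. Then $x_1,x_2$ are antipodal on that circle, as are $y_1,y_2$. So both doubled edges can run radially out through the crosscap and back in without crossing. As a check, the full incidence graph then has $29$ vertices, $36$ edges and $8$ faces, and $29-36+8=1$.

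The graph is non-planar. This follows from Theorem \ref{planarcyclic}, or directly: $\{x_1,y_1,z\}$ and $\{a_{11},a_{12},a_{21}\}$ span a subdivided $K_{3,3}$ via $a_{12}y_2y_1$ and $a_{21}x_2x_1$. Hence $\tilde g=1$. So $p_1^2p_2^2p_3$ belongs in the list, and the ``only if'' direction as stated has no proof. Case 1.2 of the paper's last theorem relies on this same false case.

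Three other parts of your plan need repair even where the conclusion holds.

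\textbf{Four primes.} There is no subdivided $K_{4,4}$ here either: in $\mathcal{I}(\tilde{\Gamma}_\mathcal{H}(\mathbb{Z}_{p_1p_2p_3p_4}))$ only the four primes and the all-singleton hyperedge have degree $\ge 4$. Use a minor instead. For each pairing $\{a,b\},\{c,d\}$, the path $\{p_a,p_b,p_cp_d\}$, $p_cp_d$, $\{p_ap_b,p_cp_d\}$, $p_ap_b$, $\{p_ap_b,p_c,p_d\}$ is connected and adjacent to all four primes. These three paths and the all-singleton hyperedge give a $K_{4,4}$ minor, and embeddability is minor-closed.

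Likewise, for $p_1^3p_2^2p_3$ take the sets $\{p_1^i,\{p_1^i,p_2,p_3\},\{p_1^i,p_2^2,p_3\}\}$, $i=1,2,3$. Against $p_2$, $p_2^2$, $p_3$ and the stars formed by $p_2^jp_3$ with the hyperedges $\{p_1^i,p_2^jp_3\}$, they give a $K_{3,5}$ minor.

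\textbf{Monotonicity.} You state it only for $m\mid n$ with the same prime set, but you pass from five or more primes to four. It holds for every $m\mid n$: append the product of the primes of $n$ not dividing $m$ to each hyperedge.

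\textbf{The positive direction.} What you call the main obstacle is immediate. For $n=p_1^ap_2p_3$ the incidence graph is a subdivided $K_{3,a}$ plus pendant paths. The two sides are $\{p_2,p_3,p_2p_3\}$ and the hyperedges $\{p_1^i,p_2,p_3\}$. So $\tilde g=\lceil (a-2)/2\rceil$ by Lemma \ref{genus2}. Also, the $K_{3,5}$ bound is $\lceil 1\cdot 3/2\rceil=2$, not $\lceil 3\cdot 3/2\rceil$.
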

\begin{theorem}
	The non-orientable genus, $\tilde{g}(\tilde{\Gamma}_\mathcal{H}(G))$ of $\tilde{\Gamma}_\mathcal{H}(G)$ is equal to 1  if and only if $\mathbb{Z}_{3^2} \times \mathbb{Z}_3$.
\end{theorem}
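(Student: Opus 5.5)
The plan mirrors the proof of Theorem \ref{orgenus}, with Theorem \ref{incidencegraph2}, Lemma \ref{genus2} and Theorem \ref{projcyclic} used in place of their orientable counterparts.

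\textbf{Sufficiency.} Let $G \cong \mathbb{Z}_{3^2} \times \mathbb{Z}_3$. The hyperedges are the four sets $e_1,\dots,e_4$ from Theorem \ref{edgecard} with $p=3$. They share the common vertices $\langle(0,1)\rangle$, $\langle(3,1)\rangle$, $\langle(3,2)\rangle$, and each $e_i$ has one private vertex: $\langle(3,0)\rangle$, $\langle(1,0)\rangle$, $\langle(1,1)\rangle$, $\langle(1,2)\rangle$. Hence $\mathcal{I}(\tilde{\Gamma}_\mathcal{H}(G))$ is $K_{3,4}$ with one pendant vertex attached to each vertex of the side of size $4$. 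Pendant vertices do not change the non-orientable genus, so Lemma \ref{genus2} gives $\tilde{g}=\lceil (1)(2)/2\rceil = 1$. Concretely, I would draw $K_{3,4}$ in a disk with antipodal boundary identification and then attach the pendants inside faces. Theorem \ref{planar} shows that $G$ is not planar, so $\tilde{g}(\tilde{\Gamma}_\mathcal{H}(G))=1$ by Theorem \ref{incidencegraph2}.

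\textbf{Necessity.} Assume $\tilde{g}(\tilde{\Gamma}_\mathcal{H}(G))=1$. By Theorem \ref{planar}, $G$ is none of the planar groups. By Lemma \ref{injgenus}, it suffices to show that every minimal non-planar candidate subgroup $K\ne \mathbb{Z}_{3^2}\times\mathbb{Z}_3$ already has $\tilde{g}>1$. I would take the subgroups in the same order as the case analysis of Theorem \ref{orgenus}.

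\begin{enumerate}
\item \emph{All primes distinct (cyclic subgroups).} Theorem \ref{projcyclic} excludes $\mathbb{Z}_{p_1^{\alpha}}\times\mathbb{Z}_{p_2^3}$ with $\alpha\ge 5$, $\mathbb{Z}_{p_1^4}\times\mathbb{Z}_{p_2^4}$, $\mathbb{Z}_{p_1^{\beta}}\times \mathbb{Z}_{p_2}\times\mathbb{Z}_{p_3}$ with $\beta\ge5$, all cases with $\omega\ge 4$, and the remaining excluded cyclic orders via \cite{sb3}.
\item \emph{$\mathbb{Z}_{p^2}\times\mathbb{Z}_p$ with $p\ge5$.} The $p$ common vertices and $p+1$ hyperedges give $K_{5,6}\subseteq K_{p,p+1}$, so $\tilde{g}\ge \lceil 3\cdot4/2\rceil=6$.
\item \emph{$\mathbb{Z}_{2^2}\times\mathbb{Z}_{2^2}$, $\mathbb{Z}_2\times\mathbb{Z}_2\times\mathbb{Z}_{p^3}$, $\mathbb{Z}_p\times\mathbb{Z}_p\times\mathbb{Z}_q$ ($p\ge3$), $\mathbb{Z}_p\times\mathbb{Z}_p\times\mathbb{Z}_q\times\mathbb{Z}_r$, and $\mathbb{Z}_2^3$.} For each I would use the subgraphs of the incidence graph already shown to be non-toroidal in Figures \ref{torus1} and \ref{torus2}. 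In each I would look for two disjoint Kuratowski subgraphs, since a graph with two disjoint copies of $K_{3,3}$ or $K_5$ has $\tilde{g}\ge2$ by additivity of Euler genus over blocks. Failing that, I would look for one of the $35$ forbidden minors for the projective plane.
\end{enumerate}

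As a quick check in the last item, I would use Euler's formula in bipartite form, $|E|\le 2(|V|-2+\tilde{g})$, together with explicit edge counts. For instance, $\mathbb{Z}_2^3$ has $7$ subgroups of order $2$, $7$ of order $4$, and many hyperedges, which likely forces $\tilde{g}\ge2$ directly. Finally, every non-cyclic $G$ other than the planar groups and $\mathbb{Z}_{3^2}\times\mathbb{Z}_3$ contains one of these $K$, exactly as observed at the end of the proof of Theorem \ref{orgenus}.

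\textbf{Main obstacle.} The hard step is item 3 for the small groups $\mathbb{Z}_{2^2}\times\mathbb{Z}_{2^2}$, $\mathbb{Z}_3\times\mathbb{Z}_3\times\mathbb{Z}_q$ and $\mathbb{Z}_2\times\mathbb{Z}_2\times\mathbb{Z}_{p^3}$. There the Euler bound may be too weak, and being non-toroidal does not imply being non-projective. I would first try to exhibit two vertex-disjoint $K_{3,3}$-subdivisions. For $\mathbb{Z}_2\times\mathbb{Z}_2\times\mathbb{Z}_{p^3}$, I would use the three order-$2$ subgroups against $e_1,e_2,e_3$ for one copy, and a second copy built from the $H_iK_j$ vertices. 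If no disjoint pair exists, I would fall back on a direct minor argument (a $K_{3,5}$ or $K_{4,4}-e$ type obstruction).
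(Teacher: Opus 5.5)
Your sufficiency argument is correct (the incidence graph is $K_{3,4}$ with a pendant at each hyperedge-node), and your overall strategy is the paper's. The necessity direction cannot be completed, because the case you flag as the main obstacle is a genuine counterexample.

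Take $G=\mathbb{Z}_2\times\mathbb{Z}_2\times\mathbb{Z}_{p^3}$ with $H_1,H_2,H_3,H,K_1,K_2,K_3$ as in the paper. Every hyperedge contains exactly one vertex with nontrivial $p$-part, and the hyperedges are precisely $e_j=\{H_1,H_2,H_3,K_j\}$, $f_j=\{H,K_j\}$, and the nine sets $\{H_iK_j\}\cup(\{H_1,H_2,H_3\}\setminus\{H_i\})$. Each $H_iK_j$ lies in only one hyperedge, so it is a pendant vertex of $\mathcal{I}$. Hence your second $K_{3,3}$ ``built from the $H_iK_j$'' cannot exist.

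Now delete the pendants, suppress the degree-$2$ vertices $K_j,f_j$, and keep one of each three parallel $H_{i'}$--$H_{i''}$ paths. What remains is $K_{3,4}$ on $\{e_1,e_2,e_3\}\cup\{H_1,H_2,H_3,H\}$ plus the triangle $H_1H_2H_3$. This graph is projective-planar:
draw $K_4$ on $H_1,H_2,H_3,H$ as the hemicube (the antipodal quotient of the cube, with three quadrilateral faces, each through all four vertices);
put $e_j$ in the $j$-th face, joined to its four corners;
delete the three $K_4$-edges at $H$;
restore the subdivisions, parallel paths and pendants inside faces.

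Since $\mathcal{I}$ contains $K_{3,3}$, $\tilde{g}=1$ for this group. So no disjoint Kuratowski pair or forbidden minor exists, and the paper's ``clearly not embeddable'' in its Case 2.2.1 is wrong in the same way. Your other two flagged cases are fine. $\mathbb{Z}_{2^2}\times\mathbb{Z}_{2^2}$ falls to Euler directly, since $81>2(36-1)$. The $2$-core for $\mathbb{Z}_3\times\mathbb{Z}_3\times\mathbb{Z}_q$ is $K_{4,5}-4K_2$, which is a forbidden minor.

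There are further problems.

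\textbf{A second counterexample.} You discard $\mathbb{Z}_2\times\mathbb{Z}_2\times\mathbb{Z}_{p^2}$ via Theorem~\ref{planar}, but that proof omits the hyperedge $\{\mathbb{Z}_2\times\mathbb{Z}_2\times\{0\},\langle(0,0,1)\rangle\}$. With it, the paper's $e_1,e_2$ and the three order-$2$ subgroups are branch vertices of a $K_5$-subdivision. Here $e_1$ reaches $e_2$ through $\langle(0,0,p)\rangle$, $e_6$, $\mathbb{Z}_2\times\mathbb{Z}_2\times\{0\}$, the missing hyperedge and $\langle(0,0,1)\rangle$; the order-$2$ subgroups are joined pairwise through $e_3,e_4,e_5$. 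So this group is non-planar. Its incidence graph is a subgraph of the one above, so it is projective and $\tilde{g}=1$.

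\textbf{The covering claim fails.} Even where the statement is true, your closing claim that every remaining group contains a listed $K$ is false. The groups $\mathbb{Z}_4\times\mathbb{Z}_2\times\mathbb{Z}_q$, $\mathbb{Z}_{3^3}\times\mathbb{Z}_3$, $\mathbb{Z}_{3^2}\times\mathbb{Z}_{3^2}$ and $\mathbb{Z}_p\times\mathbb{Z}_p\times\mathbb{Z}_p$ ($p$ odd) contain none of your listed $K$; the paper misses them too. They do have $\tilde{g}\ge 2$, respectively via a $K_{3,5}$-minor, a $K_{3,7}$, the Euler bound, and a $K_{3,5}$. Each still needs its own argument.

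\textbf{The Euler check for $\mathbb{Z}_2\times\mathbb{Z}_2\times\mathbb{Z}_2$.} On the full incidence graph the bound is tight, $42=2(22-1)$, so it proves nothing. It only works after removing the seven pendant order-$4$ subgroups, giving $35>28$.
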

\begin{proof}

	 If $G \cong \mathbb{Z}_{3^2} \times \mathbb{Z}_3$, then Figure \ref{proj1}(a) depicts an embedding of $\mathcal{I}(\tilde{\Gamma}_\mathcal{H}(\mathbb{Z}_{3^2} \times \mathbb{Z}_3))$  on a projective plane. Hence, Theorem \ref{planar} implies that $\tilde{g}(\tilde{\Gamma}_\mathcal{H}(G)) = 1$.\\Conversely, assume that $\tilde{g}(\mathcal{I}(\tilde{\Gamma}_\mathcal{H}(G)))$ is equal to 1. Then, by Theorem \ref{planar}, $G$ is not isomorphic to one of $\mathbb{Z}_{p_1} \times \mathbb{Z}_{p_1}$, $\mathbb{Z}_{2^\alpha} \times \mathbb{Z}_2$ or $\mathbb{Z}_2 \times \mathbb{Z}_2 \times \mathbb{Z}_{p_1^\beta}$ with $\beta =1,2$. By Lemma \ref{injgenus}, we have, if $K \leq G$ and $\tilde{g}(\tilde{\Gamma}_\mathcal{H}(K)) > 1$, then $\tilde{g}(\tilde{\Gamma}_\mathcal{H}(G)) > 1$. Let $K \cong \mathbb{Z}_{p_1^{\alpha_1}} \times \mathbb{Z}_{p_2^{\alpha_2}} \times \mathbb{Z}_{p_3^{\alpha_3}} \times \cdots \times \mathbb{Z}_{p_n^{\alpha_n}}\leq G$, where $p_i'$s are prime numbers and $\alpha_i'$s are non-negative integers. We consider different possibilities of $K$ with $\tilde{g}(\tilde{\Gamma}_\mathcal{H}(K)) > 1$ case by case.\\
	 	\textbf{Case 1.} All $p_i's$ are distinct\\
	 \textbf{Case 1.1.}  If $K \cong \mathbb{Z}_{p_1^{\alpha_1}} \times \mathbb{Z}_{p_2^{\alpha_2}}$, with either $\alpha_1 \geq 5, \alpha_2 = 3$ or $\alpha_1 \geq 4, \alpha_2 = 4$, then, by Theorem \ref{projcyclic},  $\tilde{g}(\tilde{\Gamma}_\mathcal{H}(K)) > 1$.\\ 
	 \textbf{Case 1.2.} If $K \cong \mathbb{Z}_{p_1^{\alpha_1}} \times \mathbb{Z}_{p_2} \times \mathbb{Z}_{p_3}$ with $\alpha_1 \geq 5$ or $K \cong \mathbb{Z}_{p_1^2} \times \mathbb{Z}_{p_2^2} \times \mathbb{Z}_{p_3}$, then, by Theorem \ref{projcyclic},  $\tilde{g}(\tilde{\Gamma}_\mathcal{H}(K)) > 1$.\\
	 	\textbf{Case 1.3.} If $\omega(n) \geq 4$, then, by Theorem \ref{projcyclic}, $\tilde{g}(\tilde{\Gamma}_\mathcal{H}(K)) > 1$.\\
	 		\textbf{Case 2.} Two $p_i's$ are same\\
	 	\textbf{Case 2.1.}  If $K \cong \mathbb{Z}_{p_1^{\alpha_1}} \times \mathbb{Z}_{p_1^{\alpha_2}}$, then consider the following cases:\\	\textbf{Case 2.1.1.} If either $p_1=2$ or $p_1 =3$ and $\alpha_1, \alpha_2 \geq 2$, then $ \tilde{\Gamma}_\mathcal{H}(\mathbb{Z}_{2^2} \times \mathbb{Z}_{2^2})$ is a subhypergraph of $\tilde{\Gamma}_\mathcal{H}(K)$ and Figure \ref{proj1}(b) depicts a drawing of a subgraph of $\mathcal{I}(\tilde{\Gamma}_\mathcal{H}(\mathbb{Z}_{2^2} \times \mathbb{Z}_{2^2}))$ on a projective plane. Clearly, it is not embeddable on a projective plane and hence, by Theorem \ref{incidencegraph2},  $\tilde{g}(\tilde{\Gamma}_\mathcal{H}(G)) > 1$.\\
	 			\textbf{Case 2.1.2.} If $p_1 \geq 5$, then $ \mathbb{Z}_{p_1^2} \times \mathbb{Z}_{p_1}$, where $p_1 \geq 5$ is a prime, is a subgroup of $K$. Consider the following hyperedges of $ \mathbb{Z}_{p_1^2} \times \mathbb{Z}_{p_1}$: $e_1=\{<(0,1)>,<(p,1)>,<(p,2)>,<(p,0)>\}, e_2=\{<(0,1)>,<(p,1)>,<(p,2)>,<(1,0)>\}, e_3=\{<(0,1)>,<(p,1)>,<(p,2)>,<(1,1)>\}, e_4=\{<(0,1)>,<(p,1)>,<(p,2)>,<(1,2)>\}, e_5=\{<(0,1)>,<(p,1)>,<(p,2)>,<(1,3)>\}$ and  $e_6=\{<(0,1)>,<(p,1)>,<(p,2)>,<(1,4)>\}$. Hence,  $\mathcal{I}(\tilde{\Gamma}_\mathcal{H}(K))$ contains $K_{3,5}$ and consequently, $\tilde{g}(\tilde{\Gamma}_\mathcal{H}(G)) > 1$.\\
	 			\textbf{Case 2.2.}  If $K \cong \mathbb{Z}_{p_1} \times \mathbb{Z}_{p_1} \times \mathbb{Z}_{p_2^\alpha}$, then consider the following cases:\\
	 			\textbf{Case 2.2.1.} If $p_1=2$ and $\alpha \geq 3$, then $\mathbb{Z}_2 \times \mathbb{Z}_2 \times \mathbb{Z}_{p_2^3}$ is a subgroup of $K$.  Figure \ref{proj1}(c) depicts a drawing of $\mathcal{I}(\tilde{\Gamma}_\mathcal{H}(\mathbb{Z}_{2} \times \mathbb{Z}_2 \times \mathbb{Z}_{p_2^3}))$ on a projective plane, where $H_1=<(1,0,0)>, H_2=<(0,1,0)>, H_3=<(1,1,0)>, H=\mathbb{Z}_2 \times \mathbb{Z}_2 \times \{0\}, K_1=<(0,0,p_2^2)>, K_2=<(0,0,p_2)>$ and $K_3=<(0,0,1)>$. Clearly, it is not embeddable on a projective plane and hence, by Theorem \ref{incidencegraph2}, $\tilde{g}(\tilde{\Gamma}_\mathcal{H}(K)) > 1$.\\
	 			  	\begin{figure}[htbp!]
	 				\centering
	 				\subfloat[Embedding of $\mathcal{I}(\tilde{\Gamma}_\mathcal{H}(\mathbb{Z}_{3^2} \times \mathbb{Z}_3))$ on a projective plane]{\begin{tikzpicture}[scale=.5, every node/.style={circle, fill=black, minimum size=4pt, inner sep=0pt}]
	 						\draw[thick] (-6,-4) rectangle (6,4); 
	 						

	 						\node[label=below:\tiny{${\langle(0,1)\rangle}$}] (u1) at (1,-4) {};
	 						\node[label=above:\tiny{${\langle(0,1)\rangle}$}] (u2) at (1,4) {};
	 						\node[label=right:\tiny{${\langle(3,1)\rangle}$}] (u3) at (6,0) {};
	 						\node[label=left:\tiny{${\langle(3,1)\rangle}$}] (u4) at (-6,0) {};
	 						\node[label=left:\tiny{${\langle(3,2)\rangle}$}] (u5) at (-1,0) {};
	 						\node[label=above:\tiny{${\langle(3,0)\rangle}$}] (u6) at (-4,2) {};
	 						\node[label=above:\tiny{${\langle(1,0)\rangle}$}] (u7) at (4,2) {};
	 						\node[label=left:\tiny{${\langle(1,1)\rangle}$}] (u8) at (-4,-3) {};
	 						\node[label=right:\tiny{${\langle(1,2)\rangle}$}] (u9) at (4,-3) {};
	 						
	 						\node[label=above:\tiny{${e_1}$}] (v1) at (-2.5,1) {};
	 						\node[label=below:\tiny{${e_2}$}] (v2) at (0,0) {};
	 						\node[label=below:\tiny{${e_3}$}] (v3) at (-3,-3) {};
	 						\node[label=above:\tiny{${e_4}$}] (v4) at (2,-2) {};
	 						\draw (u2) -- (v1);
	 						\draw (u4) -- (v1);
	 						\draw (u5) -- (v1);
	 						\draw (u6) -- (v1);
	 						\draw (u2) -- (v2);
	 						\draw (u3) -- (v2);
	 						\draw (u5) -- (v2);
	 						\draw (u7) -- (v2);
	 						\draw (u1) -- (v3);
	 						\draw (u4) -- (v3);
	 						\draw (u5) -- (v3);
	 						\draw (u8) -- (v3);
	 						\draw (u5) -- (v4);
	 						\draw (u3) -- (v4);
	 						\draw (u1) -- (v4);
	 						\draw (u9) -- (v4);
	 						
	 						
	 						
	 						
	 						\draw[->,] (-2,4) -- (-1,4) [right] {};
	 						\draw[->,] (-1,-4) -- (-2,-4) [right] {};
	 						\draw[->,] (6,2) -- (6,1) [right] {};
	 						\draw[->,] (-6,0) -- (-6,1) [right] {};
	 				\end{tikzpicture}}
	 				
	 				\subfloat[A subgraph of $\mathcal{I}(\tilde{\Gamma}_\mathcal{H}(\mathbb{Z}_{2^2} \times \mathbb{Z}_{2^2}))$ on a projective plane]{\begin{tikzpicture}[scale=.6, every node/.style={circle, fill=black, minimum size=4pt, inner sep=0pt}]
	 						\draw[thick] (-6,-4) rectangle (4,4); 
	 						

	 						\node[label=above:\tiny{${\langle(0,2)\rangle}$}] (u1) at (-4,4) {};
	 						\node[label=below:\tiny{${\langle(0,2)\rangle}$}] (u2) at (2,-4) {};
	 						\node[label=above:\tiny{${\langle(2,0)\rangle}$}] (u3) at (-.5,4) {};
	 						\node[label=below:\tiny{${\langle(2,0)\rangle}$}] (u4) at (-.5,-4) {};
	 						\node[label=above:\tiny{${\langle(2,2)\rangle}$}] (u5) at (2,4) {};
	 						\node[label=below:\tiny{${\langle(2,2)\rangle}$}] (u6) at (-4,-4) {};
	 						\node[label=left:\tiny{${\langle(1,-1)\rangle}$}] (u8) at (-4,-2) {};
	 						\node[label=right:\tiny{${\langle(3,-1)\rangle}$}] (u9) at (1,-1) {};
	 						\node[label=right:\tiny{${\langle(1,0)\rangle}$}] (u10) at (-.2,0.4) {};
	 						\node[label=right:\tiny{${\langle(2,-1)\rangle}$}] (u11) at (0,3.5) {};
	 						
	 						\node[label=left:\tiny{${e_1}$}] (v1) at (-2.5,1) {};
	 						\node[label=left:\tiny{${e_2}$}] (v2) at (-4,0) {};
	 						\node[label=above:\tiny{${e_3}$}] (v3) at (3,-2) {};
	 						\node[label=left:\tiny{${e_4}$}] (v4) at (-1,0) {};
	 						\node[label=left:\tiny{${e_5}$}] (v5) at (-1,-1) {};
	 						\node[label=above:\tiny{${e_7}$}] (v7) at (-1,2.5) {};
	 						\node[label=right:\tiny{${e_{10}}$}] (v10) at (0,-2.5) {};

	 						\draw (u1) -- (v1);
	 						\draw (u3) -- (v1);
	 						\draw (u5) -- (v1);
	 						\draw (u2) -- (v2);
	 						\draw (u4) -- (v2);
	 						\draw (u8) -- (v2);
	 						\draw (u2) -- (v3);
	 						\draw (u4) -- (v3);
	 						\draw (u9) -- (v3);
	 						\draw (u1) to[out=270, in=160](v4);
	 						\draw (u5) -- (v4);
	 						\draw (u10) -- (v4);
	 						\draw (u1) to[out=240, in=160](v5);
	 						\draw (u5) to[out=300, in=30](v5);
	 						\draw (u3) -- (v7);
	 						\draw (u5) -- (v7);
	 						\draw (u11) -- (v7);
	 						\draw (u10) -- (v10);
	 						\draw (u9) -- (v10);
	 						\draw (u11) -- (v10);

	 						
	 						
	 						\draw[->,] (-2,4) -- (-1,4) [right] {};
	 						\draw[->,] (-2,-4) -- (-3,-4) [right] {};
	 						\draw[->,] (4,1) -- (4,0) [right] {};
	 						\draw[->,] (-6,0) -- (-6,1) [right] {};
	 				\end{tikzpicture}}
	 				\hspace{.2cm}
	 				\subfloat[$\mathcal{I}(\tilde{\Gamma}_\mathcal{H}(\mathbb{Z}_{2} \times \mathbb{Z}_2 \times \mathbb{Z}_{p_2^3}))$ on a projective plane]{\begin{tikzpicture}[scale=.7, every node/.style={circle, fill=black, minimum size=4pt, inner sep=0pt}]
	 						\draw[thick] (-6,-5) rectangle (7,4); 
	 						

	 						\node[label=below:\tiny{${H_1}$}] (u1) at (-2,-5) {};
	 						\node[label=above:\tiny{${H_1}$}] (u2) at (1,4) {};
	 						\node[label=right:\tiny{${H_2}$}] (u3) at (7,0) {};
	 						\node[label=left:\tiny{${H_2}$}] (u4) at (-6,0) {};
	 						\node[label=above:\tiny{${H_3}$}] (u5) at (-2,4) {};
	 						\node[label=below:\tiny{${H_3}$}] (u6) at (1,-5) {};
	 						\node[label=above:\tiny{${K_1}$}] (u7) at (-1.5,0) {};
	 						\node[label=right:\tiny{${K_2}$}] (u8) at (-.8,-1) {};
	 						\node[label=above:\tiny{${K_3}$}] (u9) at (-2,-2) {};
	 						\node[label=right:\tiny{${H_3K_1}$}] (u10) at (3,2) {};
	 						\node[label=right:\tiny{${H_3K_2}$}] (u11) at (3,-2) {};
	 						\node[label=right:\tiny{${H_3K_3}$}] (u12) at (5,2) {};
	 						\node[label=right:\tiny{${H_2K_1}$}] (u13) at (0,3) {};
	 						\node[label=right:\tiny{${H_2K_2}$}] (u14) at (0,2) {};
	 						\node[label=right:\tiny{${H_2K_3}$}] (u15) at (1,1) {};
	 						\node[label=right:\tiny{${H_1K_1}$}] (u16) at (-5,3.5) {};
	 						\node[label=right:\tiny{${H_1K_2}$}] (u17) at (-3.5,2) {};
	 						\node[label=right:\tiny{${H_1K_3}$}] (u18) at (-5.5,-4) {};
	 						\node[label=above:\tiny{${H}$}] (u19) at (-.8,0.5) {};
	 						
	 						\node[label=right:\tiny{${e_1}$}] (v1) at (-2.5,1) {};
	 						\node[label=left:\tiny{${e_2}$}] (v2) at (0,0) {};
	 						\node[label=below:\tiny{${e_3}$}] (v3) at (-3,-3) {};
	 						\node[label=left:\tiny{${e_4}$}] (v4) at (2,2) {};
	 						\node[label=above:\tiny{${e_5}$}] (v5) at (2,-2) {};
	 						\node[label=above:\tiny{${e_6}$}] (v6) at (3,3) {};
	 						\node[label=above:\tiny{${e_7}$}] (v7) at (-1,3) {};
	 						\node[label=above:\tiny{${e_8}$}] (v8) at (-.5,2) {};
	 						\node[label=below:\tiny{${e_9}$}] (v9) at (0,1) {};
	 						\node[label=left:\tiny{${e_{10}}$}] (v10) at (-5,2.5) {};
	 						\node[label=left:\tiny{${e_{11}}$}] (v11) at (-4.5,2) {};
	 						\node[label=left:\tiny{${e_{12}}$}] (v12) at (-5,-2) {};
	 						\node[label=right:\tiny{${e_{13}}$}] (v13) at (-2,1.5) {};
	 						\node[label=left:\tiny{${e_{14}}$}] (v14) at (-1,-.5) {};
	 						\node[label=left:\tiny{${e_{15}}$}] (v15) at (-3,-1.8) {};
	 						
	 						\draw (u1) -- (v1);
	 						\draw (u4) -- (v1);
	 						\draw (u5) -- (v1);
	 						\draw (u7) -- (v1);
	 						\draw (u1) -- (v2);
	 						\draw (u3) -- (v2);
	 						\draw (u5) -- (v2);
	 						\draw (u8) -- (v2);
	 						\draw (u1) -- (v3);
	 						\draw (u4) -- (v3);
	 						\draw (u6) -- (v3);
	 						\draw (u9) -- (v3);
	 						\draw (u2) -- (v4);
	 						\draw (u3) -- (v4);
	 						\draw (u10) -- (v4);
	 						\draw (u1) -- (v5);
	 						\draw (u3) -- (v5);
	 						\draw (u11) -- (v5);
	 						\draw (u2) -- (v6);
	 						\draw (u3) -- (v6);
	 						\draw (u12) -- (v6);
	 						\draw (u2) -- (v7);
	 						\draw (u5) -- (v7);
	 						\draw (u13) -- (v7);
	 						\draw (u2) -- (v8);
	 						\draw (u5) -- (v8);
	 						\draw (u14) -- (v8);
	 						\draw (u2) -- (v9);
	 						\draw (u5) -- (v9);
	 						\draw (u15) -- (v9);
	 						\draw (u4) -- (v10);
	 						\draw (u5) -- (v10);
	 						\draw (u16) -- (v10);
	 						\draw (u4) -- (v11);
	 						\draw (u5) -- (v11);
	 						\draw (u17) -- (v11);
	 						\draw (u4) -- (v12);
	 						\draw (u6) -- (v12);
	 						\draw (u18) -- (v12);
	 						\draw (u7) -- (v13);
	 						\draw (u19) -- (v13);
	 						\draw (u8) -- (v14);
	 						\draw (u19) -- (v14);
	 						\draw (u9) -- (v15);
	 						\draw (u19) -- (v15);
	 						
	 						
	 						
	 						\draw[->,] (-2,4) -- (-1,4) [right] {};
	 						\draw[->,] (1,-5) -- (0,-5) [right] {};
	 						\draw[->,] (7,1) -- (7,-1) [right] {};
	 						\draw[->,] (-6,0) -- (-6,1) [right] {};
	 				\end{tikzpicture}}
	 				\caption{}
	 				\label{proj1}
	 			\end{figure}
	 			\textbf{Case 2.2.2.} If $p_1 \geq 3$, then $\mathbb{Z}_{p_1} \times \mathbb{Z}_{p_1} \times \mathbb{Z}_{p_2}$ is a subgroup of $K$. Figure \ref{proj2}(a) depicts a drawing of $\mathcal{I}(\tilde{\Gamma}_\mathcal{H}(\mathbb{Z}_{p_1} \times \mathbb{Z}_{p_1} \times \mathbb{Z}_{p_2}))$ on a projective plane, where $H_1 =<(0,1,0)>, H_2=<(1,0,0)>, H_3=<(1,1,0)>, H=\mathbb{Z}_{p_1} \times \mathbb{Z}_{p_1} \times \{0\}, H'=\{0\} \times \{0\} \times \mathbb{Z}_{p_2}$. Clearly, it is not embeddable on a projective plane and hence, $\tilde{g}(\tilde{\Gamma}_\mathcal{H}(K)) > 1$.\\
	 				\textbf{Case 2.3.} If $K \cong \mathbb{Z}_{p_1} \times \mathbb{Z}_{p_1} \times \mathbb{Z}_{p_2} \times \mathbb{Z}_{p_3}$, then  Figure \ref{proj2}(b) depicts a drawing of subgraph of  $\mathcal{I}(\tilde{\Gamma}_\mathcal{H}(\mathbb{Z}_{p_1} \times \mathbb{Z}_{p_1} \times \mathbb{Z}_{p_2})$ on a projective plane. Clearly, it is not embeddable on a projective plane and hence, by Theorem \ref{incidencegraph2},  $\tilde{g}(\tilde{\Gamma}_\mathcal{H}(K)) > 1$.\\
	 \textbf{Case 3}  Three $p_i'$s are same\\
	 \textbf{Case 3.1.} If $ K \cong \mathbb{Z}_{p_1^{\alpha_1}} \times \mathbb{Z}_{p_1^{\alpha_2}} \times \mathbb{Z}_{p_1^{\alpha_3}}$, then $\tilde{\Gamma}_\mathcal{H}(\mathbb{Z}_2 \times \mathbb{Z}_2 \times \mathbb{Z}_2)$ is a subhypergraph of $\tilde{\Gamma}_\mathcal{H}(K)$. Figure \ref{proj2}(c) depicts a drawing of $\mathcal{I}(\tilde{\Gamma}_\mathcal{H}(\mathbb{Z}_{2} \times \mathbb{Z}_2 \times \mathbb{Z}_2))$ on a projective plane. Clearly, it is not embeddable on a projective plane and hence, by Theorem \ref{incidencegraph}, $\tilde{g}(\tilde{\Gamma}_\mathcal{H}(K)) > 1$.\\
	  If $G$ is not isomorphic to $\mathbb{Z}_{p_1} \times \mathbb{Z}_{p_1}$, where $p_1$ is a prime or $ Z_{2^\alpha} \times \mathbb{Z}_2$	or  $ \mathbb{Z}_2 \times \mathbb{Z}_2 \times \mathbb{Z}_{p_2^\beta}$, where $p_2$ is prime and  $\beta =1,2$, or $ \mathbb{Z}_{3^2} \times \mathbb{Z}_3$, then $G$ contains a subgroup isomorphic to one of the above groups.  Hence, if $\tilde{g}(\tilde{\Gamma}_\mathcal{H}(G))$  is equal to 1, then $G \cong \mathbb{Z}_{3^2} \times \mathbb{Z}_3$.
	   \begin{figure}[htbp!]
	  	\centering
	  	\subfloat[$\mathcal{I}(\tilde{\Gamma}_\mathcal{H}(\mathbb{Z}_{3} \times \mathbb{Z}_3 \times \mathbb{Z}_{p_2}))$ on a projective plane]{\begin{tikzpicture}[scale=.5, every node/.style={circle, fill=black, minimum size=4pt, inner sep=0pt}]
	  			\draw[thick] (-6,-5) rectangle (7,4); 
	  			

	  			\node[label=below:\tiny{${H_1}$}] (u1) at (-1,-5) {};
	  			\node[label=above:\tiny{${H_1}$}] (u2) at (1,4) {};
	  			\node[label=right:\tiny{${H_2}$}] (u3) at (7,0) {};
	  			\node[label=left:\tiny{${H_2}$}] (u4) at (-6,2) {};
	  			\node[label=above:\tiny{${H_3}$}] (u5) at (-1,4) {};
	  			\node[label=left:\tiny{${H_4}$}] (u6) at (-6,0) {};
	  			\node[label=above:\tiny{${H_1 H^{'}}$}] (u7) at (3,3) {};
	  			\node[label=left:\tiny{${H_2H^{'}}$}] (u8) at (-4,-3) {};
	  			\node[label=right:\tiny{${H_3H^{'}}$}] (u9) at (4,-3) {};
	  			\node[label=right:\tiny{${H_4}$}] (u11) at (7,2) {};
	  			\node[label=above:\tiny{${H^{'}}$}] (u12) at (-3,3) {};
	  			\node[label=below:\tiny{${H_3}$}] (u13) at (1,-5) {};
	  			\node[label=below:\tiny{${H_4H^{'}}$}] (u14) at (-3,0) {};
	  			\node[label=left:\tiny{${H}$}] (u15) at (-4,2) {};
	  			
	  			\node[label=right:\tiny{${e_1}$}] (v1) at (-2.5,1) {};
	  			\node[label=below:\tiny{${e_2}$}] (v2) at (0,0) {};
	  			\node[label=below:\tiny{${e_3}$}] (v3) at (-3,-3) {};
	  			\node[label=above:\tiny{${e_4}$}] (v4) at (2,-2) {};
	  			\node[label=below:\tiny{${e_5}$}] (v5) at (-2,-2) {};
	  			\node[label=above:\tiny{${e_6}$}] (v6) at (-4,3) {};
	  			
	  			\draw (u1) -- (v1);
	  			\draw (u4) -- (v1);
	  			\draw (u5) -- (v1);
	  			\draw (u6) -- (v1);
	  			\draw (u12) -- (v1);
	  			\draw (u3) -- (v2);
	  			\draw (u5) -- (v2);
	  			\draw (u11) -- (v2);
	  			\draw (u7) -- (v2);
	  			\draw (u1) -- (v3);
	  			\draw (u13) -- (v3);
	  			\draw (u6) -- (v3);
	  			\draw (u8) -- (v3);
	  			\draw (u11) -- (v4);
	  			\draw (u3) -- (v4);
	  			\draw (u1) -- (v4);
	  			\draw (u9) -- (v4);
	  			\draw (u1) -- (v5);
	  			\draw (u4) -- (v5);
	  			\draw (u5) -- (v5);
	  			\draw (u14) -- (v5);
	  			\draw (u15) -- (v6);
	  			\draw (u12) -- (v6);
	  			
	  			
	  			
	  			
	  			\draw[->,] (-2,4) -- (0,4) [right] {};
	  			\draw[->,] (-1,-5) -- (-2,-5) [right] {};
	  			\draw[->,] (7,0) -- (7,-1) [right] {};
	  			\draw[->,] (-6,0) -- (-6,1) [right] {};
	  	\end{tikzpicture}}
	  	\hspace{.2cm}
	  	\subfloat[$\mathcal{I}(\tilde{\Gamma}_\mathcal{H}(\mathbb{Z}_2 \times \mathbb{Z}_2 \times \mathbb{Z}_3 \times \mathbb{Z}_3))$ on a projective plane]{\begin{tikzpicture}[scale=.65, every node/.style={circle, fill=black, minimum size=4pt, inner sep=0pt}]
	  			\draw[thick] (-6,-5) rectangle (7,4); 
	  			

	  			\node[label=above:\tiny{${\langle(1,0,0,0)\rangle}$}] (u1) at (-4,4) {};
	  			\node[label=below:\tiny{${\langle(1,0,0,0)\rangle}$}] (u2) at (3,-5) {};
	  			\node[label=above:\tiny{${\langle(0,1,0,0)\rangle}$}] (u3) at (-.5,4) {};
	  			\node[label=below:\tiny{${\langle(0,1,0,0)\rangle}$}] (u4) at (-.5,-5) {};
	  			\node[label=above:\tiny{${\langle(1,1,0,0)\rangle}$}] (u5) at (3,4) {};
	  			\node[label=below:\tiny{${\langle(1,1,0,0)\rangle}$}] (u6) at (-4,-5) {};
	  			\node[label=right:\tiny{${\langle(0,0,1,0)\rangle}$}] (u7) at (7,2) {};
	  			\node[label=left:\tiny{${\langle(0,0,1,0\rangle}$}] (u8) at (-6,-4) {};
	  			\node[label=right:\tiny{${\langle(0,0,0,1)\rangle}$}] (u9) at (7,0) {};
	  			\node[label=left:\tiny{${\langle(0,0,0,1\rangle}$}] (u10) at (-6,-2) {};
	  			\node[label=right:\tiny{${\langle(0,0,1,1)\rangle}$}] (u11) at (7,-2) {};
	  			\node[label=left:\tiny{${\langle(0,0,1,1)\rangle}$}] (u12) at (-6,0) {};
	  			\node[label=right:\tiny{${\langle(0,0,1,2)\rangle}$}] (u13) at (7,-4) {};
	  			\node[label=left:\tiny{${\langle(0,0,1,2)\rangle}$}] (u14) at (-6,2) {};
	  			\node[label=left:\tiny{${\langle(0,0,1,0),(0,0,0,1)\rangle}$}] (u15) at (1,0) {};
	  			\node[label=left:\tiny{${\langle(1,0,0,0),(0,1,0,0)\rangle}$}] (u16) at (2.5,1) {};
	  			\node[label=left:\tiny{${\langle(1,0,0,0),(0,0,1,0)\rangle}$}] (u17) at (-1.5,-3) {};
	  			
	  			\node[label=right:\tiny{${e_1}$}] (v1) at (-2.5,1) {};
	  			\node[label=right:\tiny{${e_2}$}] (v2) at (0,-3) {};
	  			\node[label=above:\tiny{${e_3}$}] (v3) at (4,0) {};
	  			\node[label=below:\tiny{${e_4}$}] (v4) at (2.4,-.3) {};
	  			\node[label=below:\tiny{${e_5}$}] (v5) at (-2,-2) {};

	  			\draw (u1) -- (v1);
	  			\draw (u3) -- (v1);
	  			\draw (u5) -- (v1);
	  			\draw (u8) -- (v1);
	  			\draw (u10) -- (v1);
	  			\draw (u12) -- (v1);
	  			\draw (u14) -- (v1);
	  			\draw (u2) -- (v2);
	  			\draw (u4) -- (v2);
	  			\draw (u6) -- (v2);
	  			\draw (u15) -- (v2);
	  			\draw (u7) -- (v3);
	  			\draw (u9) -- (v3);
	  			\draw (u11) -- (v3);
	  			\draw (u13) -- (v3);
	  			\draw (u16) -- (v3);
	  			\draw (u15) -- (v4);
	  			\draw (u16) -- (v4);
	  			\draw (u3) -- (v5);
	  			\draw (u5) -- (v5);
	  			\draw (u10) -- (v5);
	  			\draw (u12) -- (v5);
	  			\draw (u14) -- (v5);
	  			\draw (u17) -- (v5);
	  			
	  			
	  			
	  			\draw[->,] (-2,4) -- (-1,4) [right] {};
	  			\draw[->,] (-1,-5) -- (-2,-5) [right] {};
	  			\draw[->,] (7,2) -- (7,1) [right] {};
	  			\draw[->,] (-6,0) -- (-6,1) [right] {};
	  	\end{tikzpicture}}
	  	\hspace{.2cm}
	  	\subfloat[$\mathcal{I}(\tilde{\Gamma}_\mathcal{H}(\mathbb{Z}_{2} \times \mathbb{Z}_2 \times \mathbb{Z}_2))$ on a projective plane]{\begin{tikzpicture}[scale=.6, every node/.style={circle, fill=black, minimum size=4pt, inner sep=0pt}]
	  			\draw[thick] (-6,-5) rectangle (7,4); 
	  			

	  			\node[label=right:\tiny{${\langle(1,0,0)\rangle}$}] (u1) at (2,3) {};
	  			\node[label=below:\tiny{${\langle(0,1,0)\rangle}$}] (u2) at (-2,3) {};				
	  			\node[label=left:\tiny{${\langle(0,0,1)\rangle}$}] (u3) at (-3.5,3) {};
	  			\node[label=left:\tiny{${\langle(1,1,0)\rangle}$}] (u4) at (-6,0) {};
	  			\node[label=right:\tiny{${\langle(1,1,0)\rangle}$}] (u5) at (7,0) {};
	  			\node[label=left:\tiny{${\langle(1,0,1)\rangle}$}] (u6) at (2,-3) {};
	  			\node[label=left:\tiny{${\langle(0,1,1)\rangle}$}] (u7) at (-1,-3) {};
	  			\node[label=left:\tiny{${\langle(1,1,1)\rangle}$}] (u8) at (-3.5,-3) {};
	  			
	  			\node[label=below:\tiny{${e_1}$}] (v1) at (-2,-5) {};
	  			\node[label=above:\tiny{${e_1}$}] (v2) at (-2,4) {};
	  			\node[label=left:\tiny{${e_2}$}] (v3) at (-2,0) {};
	  			\node[label=left:\tiny{${e_3}$}] (v4) at (2,1) {};
	  			\node[label=left:\tiny{${e_4}$}] (v5) at (4,2) {};
	  			\node[label=right:\tiny{${e_5}$}] (v6) at (4,-1) {};
	  			\node[label=left:\tiny{${e_6}$}] (v7) at (-5,1) {};
	  			\node[label=right:\tiny{${e_7}$}] (v8) at (5,2) {};
	  			\node[label=right:\tiny{${e_8}$}] (v9) at (5,-2) {};
	  			
	  			\draw (u1) -- (v2);
	  			\draw (u2) -- (v2);
	  			\draw (u3) -- (v2);
	  			\draw (u5) -- (v2);
	  			\draw (u6) -- (v1);
	  			\draw (u7) -- (v1);
	  			\draw (u8) -- (v1);
	  			\draw (u3) -- (v3);
	  			\draw (u6) -- (v3);
	  			\draw (u7) -- (v3);
	  			\draw (u8) -- (v3);
	  			\draw (u3) -- (v4);
	  			\draw (u5) -- (v4);
	  			\draw (u6) -- (v4);
	  			\draw (u2) -- (v4);
	  			\draw (u1) -- (v5);
	  			\draw (u5) -- (v5);
	  			\draw (u6) -- (v5);
	  			\draw (u8) -- (v5);
	  			\draw (u1) -- (v6);
	  			\draw (u3) -- (v6);
	  			\draw (u5) -- (v6);
	  			\draw (u7) -- (v6);
	  			\draw (u1) -- (v7);
	  			\draw (u2) -- (v7);
	  			\draw (u3) -- (v7);
	  			\draw (u8) -- (v7);
	  			\draw (u1) -- (v8);
	  			\draw (u2) -- (v8);
	  			\draw (u6) -- (v8);
	  			\draw (u7) -- (v8);
	  			\draw (u2) -- (v9);
	  			\draw (u4) -- (v9);
	  			\draw (u7) -- (v9);
	  			\draw (u8) -- (v9);

	  			\draw[->,] (-2,4) -- (-1,4) [right] {};
	  			\draw[->,] (0,-5) -- (-1,-5) [right] {};
	  			\draw[->,] (7,0) -- (7,-1) [right] {};
	  			\draw[->,] (-6,0) -- (-6,1) [right] {};
	  	\end{tikzpicture}}
	  	\caption{}
	  	\label{proj2}
	  \end{figure}
\end{proof}
\begin{corollary}
	 $\tilde{\Gamma}_\mathcal{H}(G)$ is projective-planar if and only if $G \cong \mathbb{Z}_{p_1} \times \mathbb{Z}_{p_1}$, where $p_1$ is a prime or $ G \cong  Z_{2^\alpha} \times \mathbb{Z}_2$	or  $ G \cong \mathbb{Z}_2 \times \mathbb{Z}_2 \times \mathbb{Z}_{p_2^\beta}$, where $p_2$ is prime and  $\beta =1,2$, or $G \cong \mathbb{Z}_{3^2} \times \mathbb{Z}_3$.
\end{corollary}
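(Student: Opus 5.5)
The corollary follows by combining the projective-plane theorem just proved with Theorem \ref{planar}. Here \emph{projective-planar} means $\tilde{g}(\tilde{\Gamma}_\mathcal{H}(G)) \leq 1$, that is, the incidence graph embeds on the plane or on the projective plane.

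\textbf{Sufficiency.} Let $G$ be one of the listed groups.
\begin{itemize}
\item If $G \cong \mathbb{Z}_{p_1} \times \mathbb{Z}_{p_1}$, $G \cong \mathbb{Z}_{2^\alpha} \times \mathbb{Z}_2$ with $\alpha \ge 2$, or $G \cong \mathbb{Z}_2 \times \mathbb{Z}_2 \times \mathbb{Z}_{p_2^\beta}$ with $\beta = 1,2$, then Theorem \ref{planar} shows that $\mathcal{I}(\tilde{\Gamma}_\mathcal{H}(G))$ is planar. A plane embedding yields a projective-plane embedding: remove a disc from a face and glue in a crosscap. Hence $\tilde{g} \le 1$.
\item If $G \cong \mathbb{Z}_{3^2} \times \mathbb{Z}_3$, the preceding theorem gives $\tilde{g}(\tilde{\Gamma}_\mathcal{H}(G)) = 1$ directly.
\end{itemize}

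\textbf{Necessity.} Suppose $\tilde{\Gamma}_\mathcal{H}(G)$ embeds on the projective plane. Then either $\tilde{g}=0$ or $\tilde{g}=1$.
\begin{itemize}
\item If $\tilde{g} = 0$, the hypergraph is planar. By Theorem \ref{incidencegraph2} and Theorem \ref{planarhyper}, Theorem \ref{planar} applies and places $G$ among the first three families.
\item If $\tilde{g} = 1$, the preceding theorem forces $G \cong \mathbb{Z}_{3^2} \times \mathbb{Z}_3$.
\end{itemize}

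The only delicate point is the convention. Under the paper's definition, \emph{projective} means the incidence graph embeds on $N_1$, and this includes planar incidence graphs. With that reading the two cases above are exhaustive, and there is no real obstacle beyond citing the earlier results.
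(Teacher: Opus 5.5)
Your reduction is the one the paper intends: the corollary is the union of the planar list in Theorem~\ref{planar} and the $\tilde{g}=1$ classification, plus the fact that a plane embedding gives a projective-plane embedding. One step, however, fails as written. Theorem~\ref{planar} covers $\mathbb{Z}_2\times\mathbb{Z}_2\times\mathbb{Z}_{p_2^\beta}$ only for \emph{odd} $p_2$, whereas the corollary says only ``$p_2$ is prime''. For $p_2=2$, $\beta=1$ the list contains $\mathbb{Z}_2^3$, which is not projective-planar. Its hyperedges are the set of all seven order-$2$ subgroups and, for each of the seven order-$4$ subgroups $L$, the set consisting of $L$ and the four order-$2$ subgroups not in $L$. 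Deleting the pendant vertices $L$ leaves a connected bipartite graph with $15$ vertices and $35$ edges. This exceeds the bound $E\leq 2V-2=28$ that Euler's formula gives for bipartite graphs on the projective plane; the paper's Case~3.1 of the $\tilde{g}=1$ theorem asserts the same. By Lemma~\ref{injgenus}, $\mathbb{Z}_2\times\mathbb{Z}_2\times\mathbb{Z}_4$ fails too. So the ``if'' direction is false unless you add ``$p_2$ odd'', and your remark that nothing beyond citation is needed overlooks this.

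More seriously, the two results you cite do not hold as stated for $\mathbb{Z}_2\times\mathbb{Z}_2\times\mathbb{Z}_{q^\beta}$ with $q$ odd, and the corollary inherits the error. Every subgroup is a product $A\times B$ with $A\leq V:=\mathbb{Z}_2\times\mathbb{Z}_2$ and $B\leq\mathbb{Z}_{q^\beta}$, and any two nontrivial $B$'s share the subgroup of order $q$. Write $H_1,H_2,H_3$ for the order-$2$ subgroups, $Q_1\subset\cdots\subset Q_\beta$ for the nontrivial subgroups of $\mathbb{Z}_{q^\beta}$, and $a_j=\{Q_j,H_1,H_2,H_3\}$. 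Then the hyperedges are exactly the $a_j$, the sets $\{Q_j,V\}$, and the sets $\{H_iQ_j,H_k,H_l\}$ with $\{i,k,l\}=\{1,2,3\}$.

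For $\beta=2$, the hyperedge $\{V,Q_2\}$, missing from Case~3 of the proof of Theorem~\ref{planar}, yields a $K_5$-subdivision. Its branch vertices are $H_1,H_2,H_3,a_1,a_2$; each $H_k$--$H_l$ is joined through $\{H_iQ_1,H_k,H_l\}$, and $a_1$--$a_2$ through $Q_1,\{Q_1,V\},V,\{V,Q_2\},Q_2$. So this group is not planar. It is still projective-planar, so your conclusion survives there, but not for the reason you give.

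For $\beta=3$, delete the pendant vertices and suppress the degree-$2$ vertices. What remains is $K_{3,4}$ with parts $\{a_1,a_2,a_3\}$ and $\{H_1,H_2,H_3,V\}$, plus a triangle (with parallel copies) on $H_1,H_2,H_3$. This embeds in the projective plane: take the triangular embedding of $K_6$ there, name one face $a_1a_2a_3$ and the remaining vertices $H_1,H_2,H_3$, delete the three edges of that face, and place $V$ in the resulting hexagonal face. Hence $\mathbb{Z}_2\times\mathbb{Z}_2\times\mathbb{Z}_{q^3}$ is projective-planar yet absent from the list. This refutes the ``only if'' direction and Case~2.2.1 of the $\tilde{g}=1$ theorem. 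The corollary therefore cannot be obtained by citing those two results; the list itself needs correcting.
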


  \noindent In nutshell, the comparison between $\mathscr{I}^c(G)^{\star}$ and its clique hypergraph $\tilde{\Gamma}_\mathcal{H}(G)$ is given in the following table.
  
   \vspace{.5cm}		
\resizebox{\textwidth}{4cm}{ $\begin{array}{|c|c|c|}
		\hline
		\textbf{Properties} & \textbf{Graph, $\mathscr{I}^c(G)^{\star}$, where $G$ is a finite non-cyclic abelian group } & \textbf{Clique hypergraph, $\tilde{\Gamma}_\mathcal{H}(G)$, where $G$ is a finite non-cyclic abelian group}\\ \hline  
		\textbf{Diameter} & \leq 3  & \leq 3 \\ \hline
		\textbf{Girth} & gr(\mathscr{I}^c(G)^{\star})= 3 \,\cite{devicomplement} & gr(\tilde{\Gamma}_\mathcal{H}(G) = 
		\begin{cases*}
			\infty,  \, \text{if} \,\, G \cong \mathbb{Z}_{p_1} \times \mathbb{Z}_{p_1},\\
			2, \,  \, \text{otherwise}.
		\end{cases*}\\ \hline
		\textbf{Star} & \text{Never a star}  \, \cite{devicomplement} & \tilde{\Gamma}_\mathcal{H}(\mathbb{Z}_{p_1} \times \mathbb{Z}_{p_1}) \\ \hline
		\textbf{Tree} & \text{Never a tree}\,\cite{devicomplement} & \tilde{\Gamma}_\mathcal{H}(\mathbb{Z}_{p_1} \times \mathbb{Z}_{p_1}) \\ \hline
		\textbf{Chromatic Number} & \geq 3 \, \cite{devicomplement}& 2 \,\, \text{for all $n$} \\ \hline
		\textbf{Planarity} & \mathscr{I}^c(\mathbb{Z}_{2^{\alpha}}\times \mathbb{Z}_2)^{\star}, \quad \mathscr{I}^c(\mathbb{Z}_3 \times \mathbb{Z}_3)^{\star}  \cite{devicomplement} & \tilde{\Gamma}_\mathcal{H}(\mathbb{Z}_{p_1} \times \mathbb{Z}_{p_1}), \quad  \tilde{\Gamma}_\mathcal{H}(\mathbb{Z}_{2^{\alpha}}\times \mathbb{Z}_2), \quad  \tilde{\Gamma}_\mathcal{H}(\mathbb{Z}_2 \times \mathbb{Z}_2 \times \mathbb{Z}_{p_2}^{\beta}), \, \text{$p_2$ is an odd prime}  \\ \hline
		\textbf{orientable genus 1} & - & \tilde{\Gamma}_\mathcal{H}(\mathbb{Z}_{3^2} \times \mathbb{Z}_3) \\ \hline
		\textbf{non-orientable genus 1} & - &  \tilde{\Gamma}_\mathcal{H}(\mathbb{Z}_{3^2} \times \mathbb{Z}_3)\\ \hline 
	\end{array}$}
\section*{Concluding Remarks}
Recently, the study of hypergraphs associated with group structures has gained significant attention. In this paper, we examined the clique hypergraph of the complement of intersection hypergraph on subgroups of a finite non-cyclic abelian group and analysed how it differs from the associated graph. We studied the basic structural properties of these hypergraphs  like girth, chromtic number, star structures, tree structures, etc. and characterized those hypergraphs  which can be embedded on various surfaces like plane, torus and projective plane. 
\section*{Acknowledgment}
This research was financially supported by the University Grants Commission (UGC), Government of India, through the award of the Junior Research Fellowship (JRF) under the National Eligibility Test (NET) (Ref. No. 241610038497) awarded to Ardra A N.
		
		\newpage

\end{document}